\title{A Quadratically Convergent Algorithm for\\Structured Low-Rank Approximation}
\author[1]{\'Eric Schost}
\author[1,2,3]{Pierre-Jean Spaenlehauer}
\affil[1]{ORCCA and CS Department, Western University, London, ON Canada}
\affil[2]{Max Planck Institute for Mathematics, Bonn, Germany}
\affil[3]{Inria, CNRS, Universit\'e de Lorraine, France}
\date{}
\newcommand{\R}{\mathbb R}
\newcommand{\N}{\mathbb N}
\newcommand{\E}{\mathbb E}
\newcommand{\Sphere}{\mathbb S}
\newcommand{\norm}[1]{\left\lVert #1 \right\rVert}
\DeclareMathOperator{\GCD}{GCD}
\DeclareMathOperator{\SVD}{SVD}
\DeclareMathOperator{\trace}{trace}
\DeclareMathOperator{\rank}{rank}
\DeclareMathOperator{\SubRes}{Syl}
\DeclareMathOperator{\Ker}{Ker}
\DeclareMathOperator{\Hom}{Hom}
\DeclareMathOperator{\dist}{dist}
\DeclareMathOperator{\matsp}{\mathcal M}
\DeclareMathOperator{\argmin}{argmin}
\DeclareMathOperator{\im}{Im}
\DeclareMathOperator{\codim}{codim}
\newcommand{\detvar}{\mathcal D}
\newcommand{\mani}{\mathcal V}
\newcommand{\maniinter}{\mathcal W}
\newtheorem{theo}{Theorem}
\numberwithin{theo}{section}
\newtheorem*{theobis}{Theorem}
\newtheorem{lem}[theo]{Lemma}
\newtheorem{coro}[theo]{Corollary}
\newtheorem{prop}[theo]{Proposition}
\newtheorem{defi}[theo]{Definition}
\newcounter{condalpha}
\newcounter{countproblem}
\newcommand*\hexbrace[2]{%
  \underset{#2}{\underbrace{\rule{#1}{0pt}}}}
\newenvironment{problem}[1]
{
  \refstepcounter{countproblem}
  \begin{center}
    \begin{minipage}{0.9\linewidth}
      \begin{framed}
        {\bf Problem \arabic{countproblem} - {#1}.}
}
{  
      \end{framed}
    \end{minipage}
  \end{center}
}
\begin{document}
\maketitle

\begin{abstract} Structured Low-Rank Approximation is a problem arising
	in a wide range of applications in Numerical Analysis and
        Engineering Sciences. Given an input matrix $M$, the goal is
        to compute a matrix $M'$ of given rank $r$ in a linear or
        affine subspace $E$ of matrices (usually encoding a specific
        structure) such that the Frobenius distance $\norm{M-M'}$ is
        small. We propose a Newton-like iteration for solving this
        problem, whose main feature is that it converges locally
        quadratically to such a matrix under mild transversality
        assumptions between the manifold of matrices of rank $r$ and
        the linear/affine subspace $E$. We also show that the distance
        between the limit of the iteration and the optimal solution of
        the problem is quadratic in the distance between the input
        matrix and the manifold of rank $r$ matrices in $E$. To
        illustrate the applicability of this algorithm, we propose a
        {\tt Maple} implementation and give experimental results for several
        applicative problems that can be modeled by Structured
        Low-Rank Approximation: univariate approximate GCDs (Sylvester
        matrices), low-rank Matrix completion (coordinate spaces) and
        denoising procedures (Hankel matrices). Experimental
        results give evidence that this all-purpose algorithm is
        competitive with state-of-the-art numerical methods dedicated
        to these problems.
\end{abstract}

\paragraph{Keywords:} Structured low-rank approximation, Newton iteration, 
quadratic convergence, Approximate GCD, Matrix completion.

\paragraph{AMS classification:} 65B99 (Acceleration of Convergence), 65Y20 (Complexity and performance of numerical algorithms), 15A83 (Matrix completion problems).


\section{Introduction}

\subsection{Motivation and problem statement}

In a wide range of applications (data fitting, symbolic-numeric
computations, signal processing, system and control theory,\ldots),
the problem arises of computing low rank approximations of matrices
under linear constraints; this central question is known as
\emph{Structured Low-Rank Approximation} (abbreviated SLRA). Quoting
Markovsky~\cite{Mar08}: \emph{behind every linear data modeling
  problem there is a (hidden) low-rank approximation problem: the
  model imposes relations on the data which render a matrix
  constructed from exact data rank deficient.}  We refer the reader to
\cite{Mar08} for an overview of the vast extent of fields where SLRA
arises in a natural way.

Let $\matsp_{p,q}(\R)$ denote the space of $p\times q$ matrices with real
entries, endowed with the inner product
$$\langle M_1,M_2\rangle=\trace(M_1\cdot M_2^\intercal);$$ this vector
space inherits the Frobenius norm $\norm{M}=\sqrt{\langle M,M\rangle}$
deduced from this inner product. For $r \in \N$, let further
$\detvar_r\subset\matsp_{p,q}(\R)$ denote the set of matrices of size
$p\times q$ and of rank equal to $r$; this is both a semi-algebraic
set and an analytic manifold in $\matsp_{p,q}(\R)$ of codimension
$(p-r)(q-r)$~\cite[Proposition 1.1]{BruVet88}. The Structured
Low-Rank Approximation Problem can be stated as follows:

\begin{problem}{Structured Low-Rank Approximation (SLRA)}\label{prob:SLRA}
  Let $E\subset \matsp_{p,q}(\R)$ be an affine subspace of
  $\matsp_{p,q}(\R)$, let $M\in E$ be a matrix and let $r\in\N$ be a
  integer. Find a matrix $M^\star\in E\cap \detvar_r$ such that
  $\norm{M-M^\star}\text{ is ``small''}.$
\end{problem}

The problem is not entirely specified yet, since we have to state what
``small'' means.  Actually, several variants of this problem can be
found in the litterature (for instance this problem can be stated
similarly for other norms). One way to approach SLRA is as an
optimization problem, by looking for the matrix $M^\star$ in $E \cap
\detvar_r$ which minimizes $\norm{M-M^\star}$, \emph{i.e.} such that
$\norm{M-M^\star}=\dist(M,E \cap \detvar_r)$, where $\dist(M,S)$
denotes the distance of $M$ to the set $S$. Let us denote by $\Pi_{E
  \cap \detvar_r}$ the orthogonal projection $\Pi_{E \cap
  \detvar_r}(M)=\argmin_{M^\star\in E \cap
  \detvar}(\norm{M-M^\star}),$ which is well-defined and continuous in
a neighborhood of $E\cap \detvar_r$. Then, the optimization form of
the SLRA problem precisely amounts to computing $\Pi_{E \cap
  \detvar_r}(M)$.

On another hand, it may also be sufficient to compute a matrix
$M'$ whose distance to the optimal solution is small with respect
to $\dist(M,E \cap \detvar_r)$. This is a mild relaxation of the
optimization form of the problem, and it seems to be sufficient for
many applications. Indeed, the SLRA problem often arises in situations
where an exact structured matrix has been perturbed by some noise, and
SLRA can be viewed as a denoising procedure; in this context, the
original matrix may not be the optimal solution of the underlying SLRA
problem and therefore computing $M'\in E \cap \detvar_r$ such
that $\norm{M-M'}$ is small may be sufficient if the error is
controlled.


\subsection{Main results}

We propose an iterative algorithm, called {\tt NewtonSLRA}, solving the second form of the SLRA
problem with proven quadratic convergence, under mild transversality
conditions on $E$ and $\detvar_r$. Given an input matrix $M$ in $E$,
the output of the algorithm is a matrix $M'$ in $E \cap \detvar_r$
which is a good approximation of the optimal
$\Pi_{E\cap\detvar_r}(M)$, in the sense that the distance
$\norm{\Pi_{E\cap\detvar_r}(M)-M'}$ is quadratic in $\dist(M,E \cap
\detvar_r)=\norm{\Pi_{E\cap\detvar_r}(M)-M}$.

An iteration of the algorithm relies mainly on a Singular Value
Decomposition, plus a few further linear algebra operations. It is not
our goal in this paper to analyze the numerical accuracy of our
algorithm in floating-point arithmetic. For this reason, we would like
to state the complexity analysis in terms of arithmetic operations
$+,-,\times,\div$ on real numbers. We can achieve this for all
operations except the Singular Value Decomposition, which is an
iterative process in itself
(see~\cite[Ch.~45-46]{hogben2005handbook}). As a result, in our cost
analysis, we isolate the cost induced by the Singular Value
Decomposition, and count all other arithmetic operations at unit cost.

In all that follows, if $M$ is a matrix in $\matsp_{p,q}(\R)$, we
let  $B_\rho(M)\subset \matsp_{p,q}(\R)$ denote the open ball centered
at $M$ and of radius $\rho$.

\begin{theobis}
  The algorithm {\tt NewtonSLRA} computes a function
  $\varphi$ defined on an open neighborhood $U\supset\detvar_r$, and with codomain $E$,
  verifying the following property:

  Let $\zeta$ be in $E \cap \detvar_r$ such that $E$ and $\detvar_r$
  intersect transversally at $\zeta$. There exist
  $\nu,\gamma,\gamma'>0$ such that, for all $M_0$ in $E \cap
  B_\nu(\zeta)$, the sequence $(M_i)$ given by $M_{i+1}=\varphi(M_i)$
  is well-defined and converges towards a matrix $M_\infty\in E \cap
  \detvar_r$ and
  \begin{itemize} 
  \item $\norm{M_{i+1}-M_\infty}\leq \gamma \norm{M_{i}-M_\infty}^2$ for all $i\geq 0$;
  \item $\norm{\Pi_{E \cap \detvar_r}(M_0)-M_\infty}\leq \gamma' \dist(M_0,E \cap \detvar_r)^2$.
  \end{itemize}
  Moreover, the function $\varphi$ can be computed by means of a
  Singular Value Decomposition of the input matrix $M$, plus $O(\min(pqd(p-r)(q-r)+pqr, 
pqr(pq-d)(p+q-r)))$ arithmetic operations, with $d=\dim(E)$.
\end{theobis}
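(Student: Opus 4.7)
The plan is to exhibit $\varphi$ explicitly as a Gauss--Newton step that keeps the iterate inside $E$ while annihilating, to first order, the component separating it from $\detvar_r$. For $M$ in a neighborhood of $\detvar_r$, a truncated SVD yields $N(M)\in\detvar_r$, the locally unique nearest rank-$r$ matrix to $M$; the residual $R(M)=M-N(M)$ then lies in the normal space $N_{N(M)}\detvar_r$, of dimension $(p-r)(q-r)$. Writing $\vec E$ for the direction of the affine subspace $E$, transversality at $\zeta$ translates to surjectivity of the orthogonal projection $\pi_{N(M)}\colon\vec E\to N_{N(M)}\detvar_r$ for $M$ near $\zeta$. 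For $M\in E$, I then set $\varphi(M)=M+\Delta(M)$, where $\Delta(M)\in\vec E$ is the minimum-norm solution of $\pi_{N(M)}(\Delta)=-R(M)$; by construction $\varphi(M)\in E$ and $\varphi(M)\in N(M)+T_{N(M)}\detvar_r$.

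For the first bullet, I would parametrize $\detvar_r$ locally near $\zeta$ as the graph of a smooth map $\psi\colon T_\zeta\detvar_r\to N_\zeta\detvar_r$ satisfying $\psi(0)=0$ and $D\psi(0)=0$, obtained from the implicit function theorem applied to the $(r+1)\times(r+1)$ minors of $\zeta$ or, equivalently, from a local low-rank factorization. In the coordinates induced by the transversal sum $\matsp_{p,q}(\R)=\vec E+T_\zeta\detvar_r$, the map $\varphi$ becomes a standard Newton iteration for the smooth equation $\psi(\,\cdot\,)=0$ restricted to $E$, whose Jacobian at $\zeta$ is invertible. A Kantorovich-style estimate then produces $\nu,\gamma>0$ such that $(M_i)$ is well-defined in $B_\nu(\zeta)\cap E$, converges to a limit $M_\infty\in E\cap\detvar_r$, and satisfies $\norm{M_{i+1}-M_\infty}\le\gamma\norm{M_i-M_\infty}^2$.

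The second bullet is the delicate point and is where I expect the main obstacle. Let $P=\Pi_{E\cap\detvar_r}(M_0)$. By optimality, $M_0-P$ is orthogonal to $T_P(E\cap\detvar_r)=\vec E\cap T_P\detvar_r$, which is exactly the kernel of $\pi_P|_{\vec E}$ along which $\Delta(M_0)$ is chosen of minimum norm. This orthogonality forces the first-order part of the Newton correction to match the tangential vector bringing $M_0$ to $P$, so that $\varphi(M_0)-P$ is governed only by the Hessian of $\psi$ at $P$ and by the smoothness moduli of $N(\cdot)$ and $\pi_{N(\cdot)}$ --- all of which contribute $O(\norm{M_0-P}^2)$ terms, since $E$ is flat. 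Summing the geometric series arising from the quadratically convergent sequence of the first bullet then yields $\norm{P-M_\infty}\le\gamma'\dist(M_0,E\cap\detvar_r)^2$. Finally, each iteration reduces to one SVD of $M$ plus the resolution of the $d\times(p-r)(q-r)$ linear system $\pi_{N(M)}(\Delta)=-R(M)$; expressing it on bases of $\vec E$ and $N_{N(M)}\detvar_r$ (of sizes $d$ and $(p-r)(q-r)$) produces the $pqd(p-r)(q-r)+pqr$ branch of the $\min$, and the dual formulation on bases of $\vec E^\perp$ and $T_{N(M)}\detvar_r$ (of sizes $pq-d$ and $r(p+q-r)$) produces the other.
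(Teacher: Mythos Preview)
Your description of $\varphi$ and the complexity sketch are correct and match the paper's two variants. The orthogonality argument you sketch for the second bullet --- that $M_0-P$ is orthogonal to $\vec E\cap T_P\detvar_r$ while the minimum-norm correction $\Delta(M_0)$ is orthogonal to $\vec E\cap T_{N(M_0)}\detvar_r$, these two kernels differing only by $O(\norm{M_0-P})$ --- is the right single-step estimate; the paper proves exactly this bound, $\norm{\varphi(x)-\Pi_{E\cap\detvar_r}(x)}\le K\norm{x-\Pi_{E\cap\detvar_r}(x)}^2$, as the key proposition of its one-iteration analysis.

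There is, however, a genuine gap in your argument for the first bullet. The claim that ``$\varphi$ becomes a standard Newton iteration for $\psi(\cdot)=0$ restricted to $E$, whose Jacobian at $\zeta$ is invertible'' fails on two counts. First, transversality only gives $\dim\vec E\ge(p-r)(q-r)$, with strict inequality whenever $E\cap\detvar_r$ has positive dimension (the typical situation here); the relevant Jacobian is surjective, not invertible, so you are in the underdetermined regime, the limit is not isolated, and a classical Kantorovich estimate does not apply as stated. Second, $\varphi$ is \emph{not} the Gauss--Newton step for any fixed defining map $F\colon E\to\R^{(p-r)(q-r)}$: your linear operator $\pi_{N(M)}$ is the orthogonal projection onto the normal space at $N(M)=\Pi_{\detvar_r}(M)$, not the derivative $DF(M)$ at $M$ itself. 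The two coincide on $\detvar_r$ but differ at order $\dist(M,\detvar_r)$ away from it; the discrepancy with genuine Newton is therefore second-order and ultimately harmless, but establishing this is precisely where the work lies, and it is not covered by invoking an off-the-shelf convergence theorem.

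The paper avoids any such reduction. It proves the single-step quadratic estimate directly by a geometric argument: it controls the angle between $T_x\detvar_r^0$ and $E^0$ uniformly near $\zeta$, bounds the angle between $E^0\cap T_x\detvar_r^0$ and $(E^0\cap T_y\detvar_r^0)^\perp$ linearly in $\norm{x-y}$, and then tracks a chain of auxiliary projected points ($y,w,z,w',z'$) through nine elementary steps. This estimate is then iterated, following simultaneously $x_i$ and $w_i=\Pi_{E\cap\detvar_r}(x_i)$, to obtain convergence together with both bullets. In fact your second-bullet argument, once made rigorous, \emph{is} this single-step estimate; you could drop the graph-parametrization/Kantorovich route for the first bullet and simply iterate the orthogonality estimate instead, which would bring your proof in line with the paper's.
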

To the best of our knowledge, this is the first algorithm for SLRA
with proven local quadratic convergence. We can actually give
explicit estimates of the constants $\gamma$ and $\gamma'$, which
depend on the incidence angle between $\detvar_r$ and $E$ around
$\zeta$ and on the second derivatives of the projection operators
$\Pi_{\detvar_r}$ and $\Pi_{E \cap \detvar_r}$.

Algorithm {\tt NewtonSLRA} is a variant of a lift-and-project
technique which was introduced by Cadzow \cite{Cad88}. However,
instead of projecting orthogonally from $\detvar_r$ back to $E$, we
choose a direction of projection which is tangent to the determinantal
variety $\detvar_r$, in the spirit of Newton's method. The algorithm
relies on the Singular Value Decomposition in order to achieve the
``lifting'' step.

Let us denote by $\Phi$ the limit mapping, given by
$\Phi(M)=M_\infty$, for $M_\infty$ as in the above theorem. The
following theorem states that $\Phi$ behaves to the first order as the
operator $\Pi_{E \cap \detvar_r}$, which returns the optimal solution
of the SLRA problem. In what follows, for $\zeta$ in $E \cap
\detvar_r$, we denote by $T_\zeta (E \cap \detvar_r)^0$ the tangent
vector space to $E \cap \detvar_r$ at $\zeta$ (which is well-defined
as soon as $\detvar_r$ and $E$ intersect transversally at $\zeta$).

\begin{theobis}
 The limit operator $\Phi$ is well-defined and continuous around any
 point $\zeta\in E \cap \detvar_r$ such that $\detvar_r$ and $E$
 intersect transversally at $\zeta$, and $\Phi$ satisfies
 $\Phi(\zeta)=\Pi_{E \cap \detvar_r}(\zeta)=\zeta.$ Moreover, $\Phi$
 is differentiable at $\zeta$ and
 $$
 D\Phi(\zeta)=D\Pi_{E \cap \detvar_r}(\zeta)=\Pi_{T_\zeta (E \cap \detvar_r)^0}.
 $$
\end{theobis}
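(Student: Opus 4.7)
The first two assertions are essentially formal. Since $\zeta\in E\cap\detvar_r$, it is a fixed point of the iteration map $\varphi$: one step of the Newton-like procedure applied to a matrix that already lies in $E$ and has rank $r$ returns it unchanged, so $\Phi(\zeta)=\zeta$, and of course $\Pi_{E\cap\detvar_r}(\zeta)=\zeta$ as well. Well-definedness and continuity of $\Phi$ on a neighborhood of $\zeta$ follow from the previous theorem: the uniform quadratic convergence estimate $\|M_{i+1}-M_\infty\|\leq \gamma\|M_i-M_\infty\|^2$ yields, by a standard telescoping argument, a uniform linear bound of the form $\|\Phi(M)-M\|=O(\|M-\zeta\|)$ on $E\cap B_\nu(\zeta)$, which together with the continuity of $\varphi$ implies that $\Phi$ is continuous there.

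The heart of the argument is the second bound of the previous theorem,
\[
\|\Pi_{E\cap\detvar_r}(M)-\Phi(M)\|\leq \gamma'\, \dist(M,E\cap\detvar_r)^2,
\]
valid for $M$ close to $\zeta$. Since $\zeta\in E\cap\detvar_r$, one has $\dist(M,E\cap\detvar_r)\leq \|M-\zeta\|$, so this rewrites as $\|\Pi_{E\cap\detvar_r}(M)-\Phi(M)\|=O(\|M-\zeta\|^2)$. On the other hand, the transversality of $E$ and $\detvar_r$ at $\zeta$ guarantees that $E\cap\detvar_r$ is a smooth submanifold of $\matsp_{p,q}(\R)$ in a neighborhood of $\zeta$ (of codimension $\codim(E)+(p-r)(q-r)$). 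By the tubular neighborhood theorem, the nearest-point projection $\Pi_{E\cap\detvar_r}$ is of class $\mathcal{C}^1$ on a neighborhood of $\zeta$, and its differential at any point of the submanifold coincides with the orthogonal projection onto the corresponding tangent space; in particular $D\Pi_{E\cap\detvar_r}(\zeta)=\Pi_{T_\zeta(E\cap\detvar_r)^0}$.

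To finish, I would write
\[
\Phi(M)-\zeta=\bigl(\Phi(M)-\Pi_{E\cap\detvar_r}(M)\bigr)+\bigl(\Pi_{E\cap\detvar_r}(M)-\zeta\bigr).
\]
The first summand is $O(\|M-\zeta\|^2)=o(\|M-\zeta\|)$ by the bound above, while the second equals $\Pi_{T_\zeta(E\cap\detvar_r)^0}(M-\zeta)+o(\|M-\zeta\|)$ by the differentiability of $\Pi_{E\cap\detvar_r}$ at $\zeta$ just established. Adding the two gives exactly $\Phi(M)-\zeta=\Pi_{T_\zeta(E\cap\detvar_r)^0}(M-\zeta)+o(\|M-\zeta\|)$, which is the announced differentiability statement.

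The main technical input is the smoothness of the orthogonal projection $\Pi_{E\cap\detvar_r}$ near $\zeta$ together with the identification of its derivative there. Both are classical consequences of the tubular neighborhood theorem applied to the submanifold $E\cap\detvar_r$, whose local smoothness is exactly what the transversality hypothesis provides; everything else in the proof is simply an assembly of these ingredients with the two quantitative bounds of the previous theorem.
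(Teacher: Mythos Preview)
Your proof is correct and follows essentially the same route as the paper's: both use the decomposition $\Phi(M)-\zeta=(\Phi(M)-\Pi_{\maniinter}(M))+(\Pi_{\maniinter}(M)-\zeta)$, bound the first term by the second inequality of the previous theorem, and handle the second term via the differentiability of $\Pi_{\maniinter}$ at $\zeta$ with derivative $\Pi_{T_\zeta\maniinter^0}$. The only cosmetic difference is that you invoke the tubular neighborhood theorem for this last fact, whereas the paper appeals to its own Lemma~\ref{prop:diffpi}; the paper also records the slightly sharper $O(\norm{M-\zeta}^2)$ remainder (using the $C^2$ regularity of $\Pi_{\maniinter}$) instead of your $o(\norm{M-\zeta})$, but this is immaterial for the conclusion.
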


These results actually hold more generally than in the SLRA context:
the manifold $\mathcal D_r$ of rank $r$ matrices could be replaced by
any manifold $\mani$ such that the projection $\Pi_\mani$ is of class
$C^2$ and can be computed efficiently, and such that for any point
$v\in\mani$ a basis of the normal space $N_v\mani$ can be obtained.
In the context of SLRA where $\mani=\detvar_r$, the projection on
$\detvar_r$ and a description of the normal space can be obtained from
the Singular Value Decomposition.

Our algorithm \texttt{NewtonSLRA} is suitable for practical
computations: to illustrate its efficiency, we have implementated it
in {\tt Maple} and applied it in different contexts:
\begin{itemize}
\item univariate approximate GCDs;
\item low-rank matrix completion;
\item low-rank approximation of Hankel matrices.
\end{itemize}
For all of these contexts, we provide experimental results and compare
it with state-of-the-art techniques.


\subsection{Related works}

Structured low-rank approximation and its applications have led to
huge amounts of work during the last decades, from different
perspectives. One of the first iterative methods for computing SLRA is
due to Cadzow and is based on alternating projections~\cite{Cad88}; it
has a linear rate of convergence \cite{LewMal08}.

A different approach is based on optimization techniques to
approximate the nearest low-rank matrix. The difficulty in this
setting lies in the implicit description of the problem and of the
feasible set. It has led to a large family of algorithms, see \emph{e.g.} 
\cite{ChuFunPle03} and references therein.

Several particular cases of SLRA problems have also been deeply
investigated, and specific algorithms have been proposed for these
special cases.  For instance, the \emph{matrix completion} problem
asks for unknown values of a matrix in order to satisfy a rank
condition \cite{Van13}. In particular, this computational question
appears in machine learning or in compressed sensing problems, and
convex optimization techniques have been developped in this context,
see \emph{e.g.} \cite{candes2010power,candes2010matrix,Rec11}.  Techniques of alternating minimizations
for SLRA, leading to linear (also called \emph{geometric})
convergence have been introduced in \cite{2012arXiv1212.0467J}.

Structured Low-Rank Approximation is also underlying several problems
in hybrid symbo\-lic-numerical computations. The notion of
\emph{quasi-GCD} introduced in \cite{schonhage1985quasi} shows how to
compute GCDs by using floating-point computations and has led to
developments in the last decades of different notions of approximate
GCD. In particular, degree conditions on the approximate univariate or
multivariate GCD can be expressed by a rank condition in a linear
space of matrices (see \emph{e.g}
\cite{karmarkar1996approximate,karmarkar1998approximate,KalYanZhi06,zeng2004approximate,li2005fast,winkler2008structured}).
Certified techniques \cite{EmiGalLom97} and geometric approaches
\cite{pan2001computation} (by perturbing the roots instead of
perturbing the coefficients) have also been developed.

Approximate multivariate factorization also involves a linear space of
matrices (Ruppert matrices) and can be modeled by SLRA
\cite{gao2004approximate,KalMayYanZhi08}. The relation between the
ranks of Ruppert matrices and the reducibility of multivariate
polynomials follows from a criterion introduced in
\cite{ruppert1998reducibility}.

Denoising procedures in Signal Processing often involve low rank
approximation in the linear space of Hankel matrices. Dedicated
techniques for this task have been designed and analyzed in
\cite{park1999low}.

Another line of work motivated by the matrix completion problem has been
initiated in \cite{AbsAmoMey12} by designing a Newton-like method for computing
the optima of functions defined over Riemannian manifolds. 
Other optimization techniques such as the \emph{Structured Total Least Squares} approach have also been applied to the SLRA problem and can be applied to different matrix norms \cite{rosen1998structured,park1999low,KalYanZhi07}.

In \cite{DraHorOttStuTho13}, the authors show several algebraic and
geometric properties of the critical points of the Euclidean distance
function on an algebraic variety. For instance, a connection is
exhibited between the number of complex critical points and the
degrees of the Chern classes of the variety. Algebraic methods for
solving the SLRA optimization problem from this viewpoint have been
investigated in \cite{OttSpaStu13}, with a special focus on generic
linear spaces $E$ and on SLRA problems occurring in approximate GCD
and symmetric tensor decompositions.


\subsection{Organization of the paper}

Section \ref{sec:prelim} introduces the main tools that will be used
throughout this paper.  In Section \ref{sec:algo}, we describe the
algorithm {\tt NewtonSLRA}, we prove its correctness and derive the
complexity of each of its iteration.  The main result is the proof of
the local quadratic rate of convergence of {\tt NewtonSLRA} in Section
\ref{sec:proofquad}. Finally, we show the experimental behavior of
    {\tt NewtonSLRA} in Section \ref{sec:expe} and apply it to three
    different applicative contexts: univariate approximate GCD,
    low-rank matrix completion, and low-rank approximation of Hankel
    matrices.



\section{Preliminaries}
\label{sec:prelim}
Our algorithm combines features of the alternating projections
algorithm and of Newton's method for solving underdetermined
systems. In this section, we introduce basic ingredients used in those
previous algorithms that will be reused here, and present the basics
of alternating projections techniques and Newton iteration for
comparison purposes.


\subsection{Notations and basic facts}

Throughout this paper, if $E$ is an affine space, $E^0$ denotes the
underlying vector space, so that $E=x+E^0$, for any $x$ in $E$. In
particular, if $\mani$ is a manifold or an algebraic set lying in a
Euclidean space, and $x$ is in $\mani$, then $T_x\mani$ denotes the
affine space that is tangent to $\mani$ at $x$ and the underlying
vector space is denoted by $T_x \mani^0$; thus $T_x \mani =
x+T_x\mani^0$.  Similarly, the normal space $N_x \mani$ to $\mani$ at
$x$ is given by $N_x \mani = x + N_x \mani^0$, where $N_x \mani^0$ is
the orthogonal complement of $T_x \mani^0$.

Recall next our definition of the projection operator $\Pi_\mani$ on
the manifold $\mani$. For a proof of the following properties,
see~\cite[Lemma 4]{LewMal08}.

\begin{lem}\label{prop:diffpi}
 Let $\E$ be a Euclidean space and let $\mani\subset\E$ be a manifold
 of class $C^k$ with $k\geq 2$. There exists an open neighborhood $U$ of
 $\mani$ such that the projection
 $$\Pi_\mani(x)=\argmin\{\norm{y-x}:y\in \mani\}$$ is well-defined on
 $U$. Moreover, $\Pi_\mani$ is of class $C^{k-1}$ around any
 point $\zeta\in \mani$ and
 $$\forall \zeta\in\mani, D\Pi_\mani(\zeta)=\Pi_{T_{\Pi_\mani(\zeta)} \mani^0}.$$
\end{lem}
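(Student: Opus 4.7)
The plan is to invoke the tubular neighborhood construction: define a map from the normal bundle of $\mani$ into $\E$ and use the inverse function theorem to produce $\Pi_\mani$ locally as a smooth left inverse of an obvious section. Concretely, I would consider the $C^{k-1}$ map $F: N\mani \to \E$ given by $F(y,v) = y + v$, where the normal bundle $N\mani = \{(y,v) : y \in \mani, v \in N_y\mani^0\}$ inherits a $C^{k-1}$ manifold structure from the $C^k$ structure on $\mani$ (forming tangent/normal spaces costs one derivative). At any point $(y,0)$ of the zero section, the differential of $F$ is the identity after identifying $T_{(y,0)}N\mani \simeq T_y\mani^0 \oplus N_y\mani^0 = \E$, so $F$ is a local $C^{k-1}$ diffeomorphism near every $(y,0)$.

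From this I would obtain an open neighborhood $U$ of $\mani$ on which $F^{-1}: U \to N\mani$ is a well-defined $C^{k-1}$ map, and check that, after possibly shrinking $U$, the foot-point $\pi_1 \circ F^{-1}: U \to \mani$ coincides with the nearest-point projection $\Pi_\mani$. The uniqueness of the $\argmin$ is the main technical point: for $x = y + v \in U$ and any $y' \in \mani$, a Taylor expansion of $t \mapsto \|x - \gamma(t)\|^2$ along a curve $\gamma$ in $\mani$ from $y'$ to $y$ shows that the function $y' \mapsto \|x - y'\|^2$ has $y$ as a strict local minimum, and a compactness/continuity argument rules out competing minima elsewhere on $\mani$ provided the tube is narrow enough. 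The regularity of $\Pi_\mani = \pi_1 \circ F^{-1}$ then follows directly from the inverse function theorem.

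For the differential, I would exploit two elementary observations. First, $\Pi_\mani$ restricted to $\mani$ is the identity, so differentiating the inclusion $\iota: \mani \hookrightarrow \E$ yields $D\Pi_\mani(\zeta)|_{T_\zeta\mani^0} = \mathrm{id}_{T_\zeta\mani^0}$. Second, for any $n \in N_\zeta\mani^0$ and $t$ small enough that $\zeta + tn \in U$, the identity $F(\zeta,tn) = \zeta + tn$ combined with the local injectivity of $F$ forces $\Pi_\mani(\zeta + tn) = \zeta$, so $D\Pi_\mani(\zeta)(n) = 0$. Since $\E = T_\zeta\mani^0 \oplus N_\zeta\mani^0$ orthogonally, these two facts together give exactly $D\Pi_\mani(\zeta) = \Pi_{T_\zeta\mani^0}$, as claimed.

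The main obstacle I anticipate is the uniqueness of the minimizer globally on $\mani$, not just locally: the inverse function theorem only provides a local inverse of $F$, so one has to argue that for $x$ sufficiently close to $\mani$ no far-away point of $\mani$ can compete as a nearest neighbor. One clean way is to fix a relatively compact piece $K \subset \mani$ and use a lower semicontinuity argument together with the local diffeomorphism property to produce a uniform radius of a tubular neighborhood around $K$; since the statement is local around a given $\zeta$, working on a relatively compact neighborhood of $\zeta$ suffices and avoids any global topological hypothesis on $\mani$.
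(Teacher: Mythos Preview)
The paper does not actually prove this lemma: it simply refers the reader to \cite[Lemma~4]{LewMal08}. Your tubular-neighborhood argument---realizing $\Pi_\mani$ locally as $\pi_1 \circ F^{-1}$ for $F(y,v)=y+v$ on the normal bundle, invoking the inverse function theorem at the zero section, and then reading off the differential from $\Pi_\mani|_\mani = \mathrm{id}$ and $\Pi_\mani(\zeta+tn)=\zeta$---is exactly the standard route and is correct. The one delicate point is the one you already isolate: the inverse function theorem yields only local injectivity of $F$, so identifying the foot-point with the global $\argmin$ requires ruling out far-away competitors; your reduction to a relatively compact neighborhood of the given $\zeta$ is the right fix, and is enough since both the regularity claim and the differential formula are local statements.
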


We will need further results regarding the projection $\Pi_\mani$;
they will be obtained under suitable transversality assumptions. For
definiteness, let us recall the definition of transversality.
\begin{defi}
  Let $\E$ be a Euclidean space, let $\mani\subset\E$ be a manifold of
  class $C^1$, and let $E$ be an affine subspace of $\E$. We
  say that $E$ and $\mani$ intersect transversally at $\zeta\in E\cap
  \mani$ if
  $$\codim(E^0 \cap T_\zeta\mani^0)=\codim(E^0)+\codim(T_\zeta\mani^0).$$
\end{defi}
In particular, suppose that $\E$ has dimension $n$, $\mani$ has
dimension $s$, and $E$ has dimension $d$; then, a necessary condition
for them to intersect transversally is that $s+d \ge n$. In that case,
remark that $E^0 \cap T_\zeta\mani^0$ has dimension $t=s+d-n$.

Under such a transversality assumption, we obtain the following
results on the existence of smooth bases of several vector spaces.

\begin{lem}\label{lemma:bases}
  Let $\E$ be a Euclidean space of dimension $n$, let $E$ be an affine
  subspace of $\E$ of dimension $d<n$, and let $\mani\subset\E$ be a
  manifold of dimension $s$ and of class $C^k$ with $k\geq 1$. Suppose that $E$
  and $\mani$ intersect transversally at a point $\zeta\in E
  \cap\mani$; let further $t=s+d-n$ be the dimension of $E^0 \cap
  T_\zeta\mani^0$.

  Then, there exists an open neighborhood $U$ of $\zeta$ and functions
  $e_1,\dots,e_t$, $e'_{t+1},\dots,e'_d$ and $e''_{t+1},\dots,e''_s$,
  all of class $C^{k-1}: U \to \E$, such that the following
  holds:
  \begin{itemize}
  \item for $x$ in $U$, the families $(e_1(x),\dots,e_t(x))$,
    $(e_1(x),\dots,e_t(x),e'_{t+1}(x),\dots,e'_d(x))$ and \sloppy
    $(e_1(x),\dots,e_t(x),e''_{t+1}(x),\dots,e''_s(x))$ are
    orthonormal
  \end{itemize}
  and, for $x$ in $\mani \cap U$:
  \begin{itemize}
  \item the intersection $E \cap T_x \mani$ is not empty;
  \item $(e_1(x),\dots,e_t(x))$ is a basis $E^0 \cap T_x\mani^0$;
  \item $(e_1(x),\dots,e_t(x),e'_{t+1}(x),\dots,e'_d(x))$ is a basis of $E^0$;
  \item $(e_1(x),\dots,e_t(x),e''_{t+1}(x),\dots,e''_s(x))$ is a basis of $T_x\mani^0$.
  \end{itemize}
\end{lem}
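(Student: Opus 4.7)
The plan is to produce the three orthonormal families via a sequence of Gram-Schmidt applications to smoothly-varying frames, using three ingredients: (a) a $C^{k-1}$ orthonormal frame of $T_x\mani^0$ exists locally around $\zeta$, obtained for instance by orthonormalizing the columns of the Jacobian of a $C^k$ local parametrization of $\mani$; (b) transversality is an open condition, so that for $x\in\mani$ near $\zeta$ one still has $E^0+T_x\mani^0=\E$ and $\dim(E^0\cap T_x\mani^0)=t$; (c) Gram-Schmidt applied to a $C^{k-1}$ family which is linearly independent at $\zeta$ yields $C^{k-1}$ orthonormal vectors on an open neighborhood of $\zeta$.

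First, the non-emptiness of $E\cap T_x\mani$ for $x\in\mani\cap U$ follows from $E^0+T_x\mani^0=\E$: writing $\zeta-x=v+w$ with $v\in E^0$ and $w\in T_x\mani^0$ gives $\zeta-v=x+w\in E\cap T_x\mani$. Next, to build the basis of $E^0\cap T_x\mani^0$, let $\pi\colon\E\to E^{0,\perp}$ be the orthogonal projection; in the smooth frame $(w_1(x),\dots,w_s(x))$ of $T_x\mani^0$, the restriction $\pi|_{T_x\mani^0}$ is represented by an $(n-d)\times s$ matrix $A(x)$ of class $C^{k-1}$. By transversality, $A(\zeta)$ has maximal rank $n-d$, so some $(n-d)\times(n-d)$ submatrix of $A(x)$ stays invertible near $\zeta$; writing $A=[B\mid C]$ accordingly, the kernel admits the explicit parametrization $v\in\R^t\mapsto(-B(x)^{-1}C(x)v,\,v)$, yielding a $C^{k-1}$ basis of $\Ker(\pi|_{T_x\mani^0})=E^0\cap T_x\mani^0$, which becomes the sought orthonormal $(e_1(x),\dots,e_t(x))$ after Gram-Schmidt.

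To extend $(e_1,\dots,e_t)$ into an orthonormal basis of $E^0$, pick a fixed orthonormal family $(f_{t+1},\dots,f_d)\subset E^0$ completing $(e_1(\zeta),\dots,e_t(\zeta))$ into a basis of $E^0$, and apply Gram-Schmidt to $(e_1(x),\dots,e_t(x),f_{t+1},\dots,f_d)$: at $x=\zeta$ the input is orthonormal, so by continuity the process runs on a neighborhood and outputs $e'_{t+1}(x),\dots,e'_d(x)$ of class $C^{k-1}$. The extension inside $T_x\mani^0$ is done in the moving frame: expanding $e_i(x)=\sum_j a_{ij}(x)w_j(x)$, one forms the coordinate row vectors $r_i(x)\in\R^s$, fixes $g_{t+1},\dots,g_s\in\R^s$ completing $(r_1(\zeta),\dots,r_t(\zeta))$ into an orthonormal basis of $\R^s$, Gram-Schmidts the combined family to smooth vectors $h_{t+1}(x),\dots,h_s(x)\in\R^s$, and sets $e''_j(x)=\sum_k (h_j(x))_k\, w_k(x)$; these lie in $T_x\mani^0$ and are orthogonal to the $e_i$'s by construction.

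The statement requires the functions to be defined on an open neighborhood $U$ of $\zeta$ in $\E$, not only on $\mani\cap U$. This is handled by first extending the frame $(w_j(x))$ off $\mani$: a local graph representation $\mani=\{(u,\phi(u))\}$ with $\phi$ of class $C^k$ gives the $C^{k-1}$ family of subspaces $\Span\{(e_i,\partial_i\phi(u))\}_{i=1,\dots,s}$, defined on an open subset of $\E$ and agreeing with $T_x\mani^0$ on $\mani$; orthonormalizing yields the required $C^{k-1}$ extension of $(w_j)$. All Gram-Schmidt steps above then produce genuinely orthonormal vectors on $U$, while the basis properties hold precisely on $\mani\cap U$, where the extended subspaces coincide with $T_x\mani^0$. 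The main technical obstacle is this smooth off-manifold extension of the tangent-space frame; once it is available, openness of transversality together with stability of Gram-Schmidt make every subsequent step routine.
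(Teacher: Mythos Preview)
Your proof is correct, but the route differs from the paper's in a way worth noting. The paper works on the dual side: it fixes linear forms $\ell_1,\dots,\ell_{n-d}$ cutting out $E$ and takes the gradients $D\varphi_1(x),\dots,D\varphi_{n-s}(x)$ of local implicit equations for $\mani$; the three subspaces $E^0\cap T_x\mani^0$, $E^0$ and $T_x\mani^0$ are then the nullspaces of the combined linear system and of its two subsystems, and Cramer's rule followed by a single Gram-Schmidt pass produces all three orthonormal families at once. In that setup the off-manifold extension and the non-emptiness of $E\cap T_x\mani$ come for free: the gradients $D\varphi_i(x)$ are defined on an open subset of $\E$ from the outset, and non-emptiness is simply consistency of the full-rank inhomogeneous system. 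Your approach instead builds everything from a parametric frame of $T_x\mani^0$, realizes $E^0\cap T_x\mani^0$ as an explicit kernel, and then completes it separately inside $E^0$ (with fixed vectors) and inside $T_x\mani^0$ (via coordinates in the moving frame). This costs you the extra graph-chart argument to extend the frame off $\mani$ and three distinct Gram-Schmidt steps, but it makes the geometric content of transversality --- surjectivity of $\pi|_{T_x\mani^0}$ onto $E^{0,\perp}$ --- very transparent. Both arguments are valid; the paper's is shorter because the implicit description handles the extension and non-emptiness without additional work.
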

\begin{proof}
  There exist linear forms $\ell_1,\dots,\ell_{n-d}$ and constants
  $b_1,\dots,b_{n-d}$ such that for all $u$ in $\E$, $u$ is in $E$ if
  and only if $\ell_i(u)=b_i$ for all $i$ in
  $\{1,\dots,n-d\}$. Similarly, taking the gradients of implicit
  equations $\varphi_1=\dots=\varphi_{n-s}=0$ that define $\mani$
  around $\zeta$, we see that there exists a neighborhood $U$ of
  $\zeta$, functions $\ell'_1,\dots,\ell'_{n-s}: U\times \E \to \R$ of
  class $C^{k-1}$ in $x \in U$ and linear in $u \in \E$, and functions
  $b'_1,\dots,b'_{n-s}: U \to \R$ of class $C^{k-1}$ such
  that for $x$ in $\mani \cap U$, $u \in \E$ belongs to $T_x\mani$ if
  and only if $\ell'_j(x,u)=b'_j(x)$ for all $j$ in $\{1,\dots,n-s\}$.

  Thus, for a given $x$ in $\mani \cap U$, $u$ belongs to $E \cap
  T_x \mani$ if and only if the linear equations $\ell_i(u)=b_i$ and
  $\ell'_j(x,u) = b'_j(x)$ are satisfied for all $i$ in
  $\{1,\dots,n-d\}$ and $j$ in $\{1,\dots,n-s\}$. Call
  $\eta_1,\dots,\eta_{2n-s-d}$ the linear forms defining the
  homogeneous part of these equations; the corresponding homogeneous
  linear system $\eta_i=0$ defines $E^0 \cap T_x \mani^0$. The
  transversality assumption shows that for $x=\zeta$, the
  $(2n-s-d)\times n$ matrix of this system has full rank $2n-s-d$. By
  continuity, this remains true for $x$ in a neighborhood of $\zeta$,
  and for such $x$, $E \cap T_x \mani$ is not empty. Up to restricting
  $U$, this proves the second item.

  Applying Cramer's formulas, we can deduce from
  $(\varphi_1,\dots,\varphi_{n-s})$ and $(\ell_1,\dots,\ell_{n-d})$
  functions $(\varepsilon_1,\dots,\varepsilon_t)$,
  $(\varepsilon'_{t+1},\dots,\varepsilon'_d)$,
  $(\varepsilon''_{t+1},\dots,\varepsilon''_s)$, with all
  $\varepsilon_i,\varepsilon'_j,\varepsilon''_k$ of class $C^{k-1}: U \to
  \E$, such that for $x$ in $U$, the vector families $(\varepsilon_1(x),\dots,\varepsilon_t(x))$,
  $(\varepsilon_1(x),\dots,\varepsilon_t(x),\varepsilon'_{t+1}(x),\dots,\varepsilon'_d(x))$
  and
  $(\varepsilon_1(x),\dots,\varepsilon_t(x),\varepsilon''_{t+1}(x),\dots,\varepsilon''_s(x))$
  are nullspace bases for respectively
  $$
  \begin{array}{@{~}c@{~}c@{~}ccl}
    \ell_1(u)=\dots=\ell_{n-d}(u)&=&D\varphi_1(x)(u)=\cdots=D\varphi_{n-s}(x)(u)&=&0\\
    &&\ell_1(u)=\dots=\ell_{n-d}(u)&=&0\\
    &&D\varphi_1(x)(u)=\cdots=D\varphi_{n-s}(x)(u)&=&0.
  \end{array}
$$ In particular, if $x$ is actually in $\mani \cap U$, those are
  bases for respectively $E^0 \cap T_x \mani^0$, $E^0$ and $T_x
  \mani^0$. Applying Gram-Schmidt orthogonalization to these families
  of functions, we obtain the functions in the statement of the lemma.
\end{proof}

The following result is a direct corollary of the previous lemma.
\begin{lem}\label{prop:transv}
  Let $\E$ be a Euclidean space, let $\mani\subset\E$ be a manifold of
  class $C^1$ and let $E$ be an affine subspace of
  $\E$. Suppose that $E$ and $\mani$ intersect transversally at a
  point $ \zeta\in E \cap\mani$.  Then, there exists a neighborhood
  $U$ of $\zeta$ such that for $x$ in $U$, $\Pi_\mani(x)$ is
  well-defined and the intersection $E \cap T_{\Pi_\mani(x)} \mani$ is
  not empty.
\end{lem}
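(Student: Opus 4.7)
The statement is framed as a direct corollary of Lemma \ref{lemma:bases}, and the plan is simply to extract and combine the relevant pieces. First I would invoke Lemma \ref{prop:diffpi}, which produces an open neighborhood of $\mani$ on which $\Pi_\mani$ is well-defined and continuous. Since $\zeta$ lies in $\mani$, this yields in particular a neighborhood $U_0$ of $\zeta$ on which $\Pi_\mani$ is defined and continuous, with $\Pi_\mani(\zeta) = \zeta$.

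Next, I would apply Lemma \ref{lemma:bases} at the point $\zeta$: using the transversality assumption, it provides an open neighborhood $V$ of $\zeta$ such that for every $y \in \mani \cap V$ the intersection $E \cap T_y \mani$ is non-empty. By continuity of $\Pi_\mani$ at $\zeta$ together with $\Pi_\mani(\zeta) = \zeta$, there exists a neighborhood $U \subseteq U_0$ of $\zeta$ with $\Pi_\mani(U) \subseteq V$. For any $x \in U$, the projection $\Pi_\mani(x)$ lies in $\mani$ by construction and in $V$ by the choice of $U$, so $\Pi_\mani(x) \in \mani \cap V$; Lemma \ref{lemma:bases} then yields $E \cap T_{\Pi_\mani(x)}\mani \neq \emptyset$, as required.

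The argument is essentially a continuity statement: there is no genuine obstacle, and no iteration or quantitative estimate is needed. The one bookkeeping point to verify is that the two neighborhoods produced by Lemmas \ref{prop:diffpi} and \ref{lemma:bases} may differ, so one must work with the intersection $U_0 \cap \Pi_\mani^{-1}(V)$, which is still open in the ambient Euclidean space $\E$ and contains $\zeta$. This gives the neighborhood $U$ promised by the statement.
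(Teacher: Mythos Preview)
Your proof is correct and follows essentially the same approach as the paper: both invoke Lemma~\ref{prop:diffpi} for well-definedness and continuity of $\Pi_\mani$, Lemma~\ref{lemma:bases} for the non-emptiness of $E\cap T_y\mani$ near $\zeta$, and then pull back via $\Pi_\mani^{-1}$ to obtain the desired neighborhood. The paper merely combines your $U_0$ and $V$ into a single neighborhood before taking $U=\Pi_\mani^{-1}(\mani\cap U_0)\cap U_0$, which is exactly your $U_0\cap\Pi_\mani^{-1}(V)$ after renaming.
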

\begin{proof}
  Let $U_0$ be a neighborhood of $\zeta$ such that $\Pi_\mani$ is
  well-defined and continuous in $U_0$ and such that the intersection
  $E \cap T_x \mani$ is not empty for $x$ in $\mani \cap U_0$ (such an
  $U_0$ exists by Lemmas~\ref{prop:diffpi}
  and~\ref{lemma:bases}). Then, take $U = \Pi_\mani^{-1}(\mani\cap
  U_0) \cap U_0$.
\end{proof}

In the particular case where $\E=\matsp_{p,q}(\R)$ and
 $\mani=\detvar_r \subset
\matsp_{p,q}(\R)$, the projection $\Pi_{\detvar_r}$ can be made explicit using the
\emph{Eckart-Young Theorem}, which shows that $\Pi_{\detvar_r}(M)$ can
be computed from the \emph{singular value decomposition} of $M$:
\begin{theo}\label{theo:eckart-young}
 Let $M\in\matsp_{p,q}(\R)$ be a matrix, $M=U\cdot S\cdot V^\intercal$
 be its singular value decomposition and
 $\sigma_1\geq\dots\geq\sigma_{\min(p,q)}$ be its singular
 values. Assume that $\sigma_r\neq\sigma_{r+1}$ and let $\widetilde S$
 be the diagonal matrix defined by
 $$\widetilde{S}_{i,i}=\begin{cases}
                        S_{i,i}\text{ if $S_{i,i}\geq \sigma_r$}\\
			 0\text{ otherwise}
                       \end{cases}
$$ Then there exists a unique matrix $\Pi_{\detvar_r}(M)$ of rank $r$
 minimizing the distance to $M$ and this matrix is given by
 $\Pi_{\detvar_r}(M)=U\cdot\widetilde S\cdot V^\intercal$.
\end{theo}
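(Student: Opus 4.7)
The plan is to reduce the problem to a diagonal one using the orthogonal invariance of the Frobenius norm, then use a singular-value inequality (Weyl's inequality) to obtain the lower bound, and finally deduce uniqueness from the strict gap $\sigma_r>\sigma_{r+1}$.

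First I would observe that the Frobenius inner product $\langle A,B\rangle=\trace(AB^\intercal)$ is invariant under left/right multiplication by orthogonal matrices, hence so is the Frobenius norm; in particular, for any $M'\in\matsp_{p,q}(\R)$, one has $\norm{M-M'}=\norm{S-U^\intercal M'V}$. Since $U$ and $V$ are invertible, $U^\intercal M' V$ ranges over all rank-$r$ matrices as $M'$ does. It therefore suffices to prove the statement for $M=S$ diagonal, in which case the candidate projection is the truncated diagonal matrix $\widetilde S$ with $\norm{S-\widetilde S}^2=\sum_{i>r}\sigma_i^2$.

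Next, for an arbitrary rank-$r$ matrix $B\in\matsp_{p,q}(\R)$, I would bound $\norm{M-B}$ from below. Writing $\norm{M-B}^2=\sum_{i\geq 1}\sigma_i(M-B)^2$ and applying Weyl's inequality $\sigma_{i+j-1}(A_1+A_2)\leq\sigma_i(A_1)+\sigma_j(A_2)$ with $A_1=M-B$, $A_2=B$ and $j=r+1$, I obtain
$$\sigma_{i+r}(M)=\sigma_{i+r}((M-B)+B)\leq\sigma_i(M-B)+\sigma_{r+1}(B)=\sigma_i(M-B),$$
since $\rank(B)\leq r$ forces $\sigma_{r+1}(B)=0$. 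Summing the squares gives $\norm{M-B}^2\geq\sum_{i>r}\sigma_i(M)^2$, and this bound is attained by $\Pi_{\detvar_r}(M)=U\widetilde S V^\intercal$, establishing optimality.

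For uniqueness, I would suppose equality $\norm{M-B}^2=\sum_{i>r}\sigma_i(M)^2$ holds for some rank-$r$ matrix $B$. Then all the Weyl inequalities above must be equalities, and in particular $\sigma_i(M-B)=\sigma_{i+r}(M)$ for every $i$. The hypothesis $\sigma_r(M)>\sigma_{r+1}(M)$ ensures that the subspace spanned by the top $r$ left (resp.\ right) singular vectors of $M$ is uniquely determined; analyzing the equality case in Weyl's inequality shows that $B$ must coincide with $M$ on this pair of subspaces and vanish on their orthogonal complements, which forces $B=U\widetilde S V^\intercal$. The only nontrivial ingredient is Weyl's singular-value inequality, whose standard proof uses the Courant--Fischer variational characterization; this is the main technical obstacle, but it is entirely classical.
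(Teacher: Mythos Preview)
The paper does not actually prove this statement: Theorem~\ref{theo:eckart-young} is recorded as the classical Eckart--Young theorem and is used as a black box, with no argument supplied. So there is no ``paper's own proof'' to compare against; your proposal stands on its own.

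Your approach is a standard and essentially correct route. The reduction to the diagonal case via orthogonal invariance of the Frobenius norm is clean, and the lower bound $\norm{M-B}^2\geq\sum_{i>r}\sigma_i(M)^2$ via Weyl's inequality $\sigma_{i+r}(M)\leq\sigma_i(M-B)+\sigma_{r+1}(B)$ is the right mechanism. Attainment by $U\widetilde S V^\intercal$ is immediate.

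The one place where your sketch is thin is the uniqueness step. Saying ``analyzing the equality case in Weyl's inequality'' hides real work: the equality characterization of Weyl's inequality is not something one can simply quote. A more direct way to finish, once you know that equality forces $\sigma_1(M-B)=\sigma_{r+1}(M)<\sigma_r(M)$, is to argue (in the diagonalized picture $M=S$) that the restriction of $B$ to $\Span(e_1,\dots,e_r)$ differs from that of $S$ by an operator of norm strictly less than $\sigma_r$, hence is invertible; since $\rank(B)=r$ this forces $\Ker(B)\supseteq\Span(e_{r+1},\dots,e_q)$ and $\im(B)\subseteq\Span(e_1,\dots,e_r)$, so $B$ has block form $\begin{pmatrix}C&0\\0&0\end{pmatrix}$, and then $\norm{S-B}^2=\norm{\mathrm{diag}(\sigma_1,\dots,\sigma_r)-C}^2+\sum_{i>r}\sigma_i^2$ immediately gives $C=\mathrm{diag}(\sigma_1,\dots,\sigma_r)$. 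This avoids invoking a general equality case for Weyl and makes the role of the gap $\sigma_r>\sigma_{r+1}$ transparent.
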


The last notion we will need is the {\em Moore-Penrose pseudoinverse} of
either a matrix or a linear mapping $A$; in both cases, we will denote
it by $A^\dag$. Its main feature is that the solution of a consistent
linear system $A x = y$ with minimal 2-norm is given by $A^\dag y$ (in
the non-consistent case, this outputs the minimizer for the residual
$Ax-y$).


\subsection{Cadzow's algorithm: alternating projections}

The first occurrence of the general problem of structured low rank
approximation that we are aware of is described in~\cite{Cad88}. In
this paper, Cadzow introduces an algorithm based on alternating
projections to solve SLRA problems. A solution $M'$ of an SLRA problem
should verify two properties:
\begin{itemize}
\item \textbf{(P1)} $M'\in E$;
\item \textbf{(P2)} $\rank(M')\leq r$.
\end{itemize}
Cadzow's algorithm, as illustrated in Figure~\ref{fig:cadzow},
proceeds by looking successively for the nearest matrices which
satisfy alternatively \textbf{(P1)} and \textbf{(P2)}.  The nearest
matrix verifying \textbf{(P1)} is obtained by the orthogonal
projection on $E$, and, as prescribed by the Eckart-Young theorem, the
closest matrix verifying \textbf{(P2)} is obtained by truncating its
Singular Value Decomposition.

We would like to emphasize that in the general case (and in most
applications), the intersection $E \cap \detvar_r$ has positive
dimension, whereas in Figure~\ref{fig:cadzow} (and all further ones),
this intersection appears to have dimension zero.

\begin{figure}[h]
\begin{center}
\begin{tikzpicture}[yscale=1.25,scale=0.4]
\begin{axis}[scale=2.3, every axis plot post/.append style={mark=none,ultra thick}, xtick=\empty,ytick=\empty,xmin=-2,xmax=-0.5,ymin=-0.1,ymax=1.4]
\addplot[smooth]{0.4*x^2-0.4*0.5};
\addplot[black]{0};
\addplot[black] coordinates {(-1.9,0) 
(-1.341732934, 0.5100989064)
(-1.341732934, 0.01)
(-1.095900885, 0.2703995000)
(-1.095900885, 0.01)
(-0.9636949230, 0.1614831618)
(-0.9636949230, 0.01)
(-0.8840482001, 0.1026164880)
(-0.8840482001, 0.01)
(-0.8325783947, 0.06727471332)
(-0.8325783947, 0.01)
(-0.7977547626, 0.04456506452)
(-0.7977547626, 0.01)
(-0.7734455500, 0.02928720752)
(-0.7734455500, 0.01)
};
\addplot[black] coordinates {
(-1.391732934, 0.4735189064)
(-1.438312934, 0.523520067660)
(-1.388312934, 0.570100067660)
};
\end{axis}
\node at (4,1.3) {$E$};
\node at (4,9.5) {$\detvar_r$};
\end{tikzpicture}
\caption{Cadzow's algorithm}\label{fig:cadzow}
\end{center}
\end{figure}
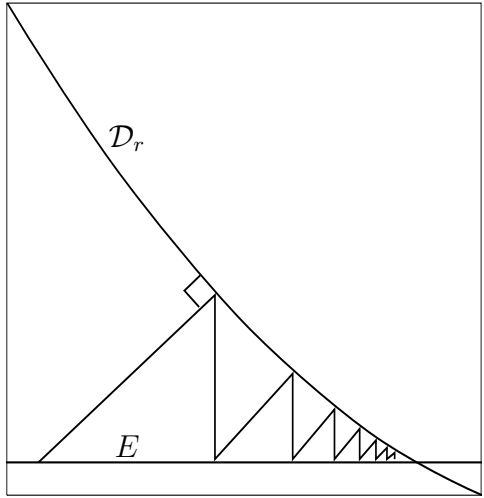

Details of Cazdow's algorithm are given in Algorithm~\ref{algo:cadzow}
below. In this algorithm, for $M\in\matsp_{p,q}(\R)$, the subroutine
$\SVD(M)$ returns three matrices
$U\in\matsp_{p,p}(\R),S\in\matsp_{p,q}(\R),V\in\matsp_{q,q}(\R)$, such
that $M=U\cdot S\cdot V^\intercal$, $U$ and $V$ are unitary matrices,
and $S$ is diagonal. The diagonal entries of $S$ are the singular
values of $M$, sorted in decreasing order.

\begin{algorithm}\caption{one iteration of Cadzow's algorithm}\label{algo:cadzow}
  \begin{algorithmic}[1]
    \Procedure{\tt Cadzow}{$M$, $(E_1,\dots, E_d)$ an orthonormal basis of $E^0$, $r$}
    \State $U,S,V\gets \SVD(M)$
    \State $U_r\gets$ first $r$ columns of $U$
    \State $V_r\gets$ first $r$ columns of $V$
    \State $S_r\gets$ $r\times r$ top-left sub-matrix of $S$
    \State $\widetilde M\gets U_r\cdot S_r\cdot V_r^\intercal$
    \State \textbf{return} $M+\sum_{i=1}^d \langle \widetilde M-M, E_i\rangle E_i$
    \EndProcedure
  \end{algorithmic}
\end{algorithm}

Algorithm \ref{algo:cadzow} (which is sometimes called
\emph{lift-and-project} or \emph{alternating projections} in the
literature) converges linearly towards a matrix $\hat M$ which
verifies both conditions \textbf{(P1)} and \textbf{(P2)}, as proved in
\cite{LewMal08}. In this context, the linear convergence means that if
$(M_i)_{i\geq 0}$ is the sequence of iterates of Cadzow's algorithm
converging towards $\lim_{i\rightarrow\infty}M_i=M_\infty$, then there
exists a positive constant $c$ such that
$$\norm{M_{i+1}-M_\infty}\leq c\norm{M_i-M_\infty}.$$
As pointed out in \cite{ConHir12}, iterating Algorithm \ref{algo:cadzow} does
not converge in general towards $\Pi_{E\cap \detvar_r}(M_0)$. 


\subsection{Newton's method}

Newton's method is an iterative technique to find zeros of real (or
complex) functions. One of its main features is its quadratic rate of
convergence: each iteration multiplies the number of significant
digits of the solution by two. This iteration was first designed for
systems with as many equations as variables, and was then successfully
extended to non-square systems by using the Moore-Penrose
pseudo-inverse. Let thus $f:\mathbb E\rightarrow \mathbb F$ be a
differentiable mapping of Euclidean spaces. Then the Newton iteration
is given by
$${\sf Newton}_f(x)=x-Df(x)^\dag f(x),$$ where, as said above,
$Df(x)^\dag$ denotes the Moore-Penrose pseudo-inverse of the linear
application $Df(x)$.

In the underdetermined case (when $\dim(\mathbb E)>\dim(\mathbb F)$), this iteration converges locally quadratically towards a point in $f^{-1}({\mathbf 0})$ if $Df(x)$ is locally surjective. The properties of this iteration have been deeply investigated during the last decades \cite{ben1965modified,allgower1990numerical,dedieu2002newton,Ded06}.

Newton's method does not apply directly in our context. However,
Figure~\ref{fig:Newton} below suggests that in some cases (when
for instance $\dim(\detvar_r)=\dim(E)$ and $\detvar_r$ is given as the
graph of a mapping defined on $E$), using Newton iteration could lead
to a fast iterative algorithm. Our algorithm is
motivated by this remark.

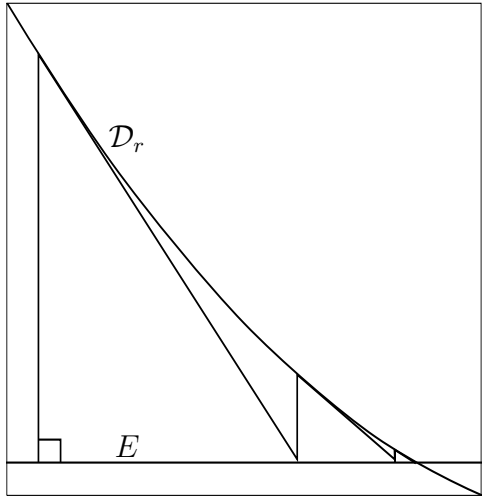
\begin{figure}[H]
\begin{center}
\begin{tikzpicture}[yscale=1.25,scale=0.4]
\begin{axis}[
scale=2.3, every axis plot post/.append style=
{mark=none,ultra thick}, xtick=\empty,ytick=\empty,xmin=-2,xmax=-0.5,ymin=-0.1,ymax=1.4]
\addplot[smooth]{0.4*x^2-0.4*0.5};
\addplot[black]{0};
\addplot[black] coordinates {
(-1.9,0)
(-1.9,1.244)
(-1.081578947, 0.01)
(-1.081578947, 0.2679252076)
(-0.7734455500, 0.01)
(-0.7734455500, 0.03928720752)
(-0.7719330257, 0)
(-0.7719330257, 0.03835223848)
(-0.7098288062, 0)
(-0.7098288062, 0.00154277364)
(-0.7071120004, 0)
(-0.7071120004, 0.00000295244)
};

\addplot[black] coordinates {
(-1.9,0.07) (-1.83,0.07) (-1.83, 0)
};
\end{axis}
\node at (4,1.3) {$E$};
\node at (4,9.5) {$\detvar_r$};
\end{tikzpicture}
\caption{Newton's method}\label{fig:Newton}
\end{center}
\end{figure}


\section{Algorithm {\tt NewtonSLRA}}\label{sec:algo}

We propose an iterative algorithm {\tt NewtonSLRA} which combines the
applicability of Cadzow's algorithm and the quadratic convergence of
Newton's iteration. Each of its iterations proceeds in the following three
main steps.
\begin{itemize}
 \item First, compute the projection $\widetilde
   M=\Pi_{\detvar_r}(M)$ onto the determinantal variety $\detvar_r$
   (lines \ref{algSLRA:rankapproxb}-\ref{algSLRA:rankapproxe} in Algorithm~\ref{algo:newtonSLRA});
 \item next, compute a set of generators of the normal space
   $N_{\widetilde M}\detvar_r$ (lines
   \ref{algSLRA:normalb}-\ref{algSLRA:normale});
 \item finally, compute the point in $E \cap T_{\widetilde M}
   \detvar_r$ which minimizes the distance to $M$ (lines
   \ref{algSLRA:projb}-\ref{algSLRA:proje}).
\end{itemize}

We propose two dual methods for computing the last step, leading to
the two variants {\tt NewtonSLRA/1} and {\tt NewtonSLRA/2}
whose pseudo-codes are given in Algorithm~\ref{algo:newtonSLRA} and Algorithm~\ref{algo:newtonSLRA2}. Their main difference is the size of an intermediate matrix
leading to the differences in their domains of efficiency: {\tt NewtonSLRA/1} is
well-suited when $r$ is large and $d$ is small, whereas {\tt
  NewtonSLRA/1} performs better when $r$ is small and $d$ is large.

In Figure~\ref{fig:NewtonSLRA}, we show one iteration of Algorithm {\tt
  NewtonSLRA}; remark that the first step is similar to what happens
in Cadzow's algorithm, but that we then use a linearization inspired
by Newton's iteration. Note as well that in this very particular
example, $E \cap T_{\widetilde M} \detvar_r$ has dimension zero,
whereas this may not be the case in general. Nevertheless, this figure
suggests that our algorithm may converge quadratically (we prove this
rate of convergence in Section~\ref{sec:proofquad}).

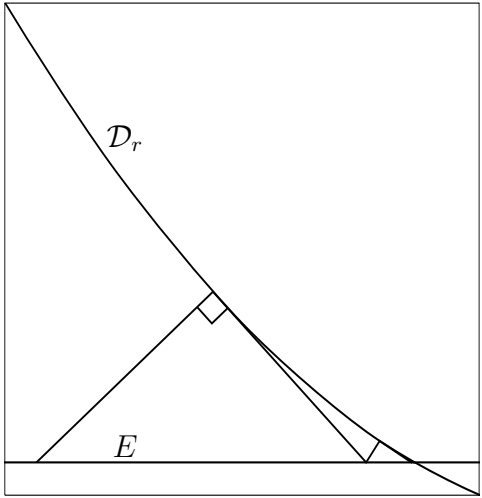
\begin{figure}
\begin{center}
\begin{tikzpicture}[yscale=1.25,scale=0.4]
\begin{axis}[
scale=2.3, every axis plot post/.append style=
{mark=none,ultra thick}, xtick=\empty,ytick=\empty,xmin=-2,xmax=-0.5,ymin=-0.1,ymax=1.4]
\addplot[smooth]{0.4*x^2-0.4*0.5};
\addplot[black]{0};
\addplot[black] coordinates {
(-1.9,0)
(-1.341732934, 0.5200989064)
(-0.8571926677, 0)
(-0.8145686697, 0.06540884708)
(-0.7141952305, 0)
(-0.7124620330, 0.00304085940)
(-0.7071269077, 0)
(-0.7071220284, 0.00000862520)
(-0.7071067814, 0)
};

\addplot[black] coordinates {
(-1.391732934, 0.4735189064)
(-1.345152934, 0.423517745140)
(-1.295152934, 0.470097745140)
};
\end{axis}
\node at (4,1.3) {$E$};
\node at (4,9.5) {$\detvar_r$};
\end{tikzpicture}
\caption{{\tt NewtonSLRA}}\label{fig:NewtonSLRA}
\end{center}
\end{figure}

\begin{algorithm}\caption{one iteration of {\tt NewtonSLRA/1} algorithm}\label{algo:newtonSLRA}
      \begin{algorithmic}[1]
    \Procedure{{\tt NewtonSLRA/1}}{$M\in E$, $(E_1,\dots, E_d)$ an orthonormal basis of $E^0$, $r\in\N$}
    \State $(U,S,V)\gets \SVD(M)$ \label{algSLRA:rankapproxb}
    \State $S_r\gets$ $r\times r$ top-left sub-matrix of $S$    
    \State $U_r\gets$ first $r$ columns of $U$
    \State $V_r\gets$ first $r$ columns of $V$
    \State $\widetilde M\gets U_r\cdot S_r\cdot V_r^\intercal$\label{algSLRA:rankapproxe}
    \State $\widetilde{u_1},\ldots,\widetilde{u_{p-r}}\gets$ last $p-r$ columns of $U$\label{algSLRA:normalb}
    \State $\widetilde{v_1},\ldots,\widetilde{v_{q-r}}\gets$ last $q-r$ columns of $V$
    \For{$i\in\{1,\ldots, p-r\}, j\in\{1,\ldots, q-r\}$}
    \State $N_{(i-1)(q-r)+j}\gets \widetilde{u_i}\cdot \widetilde{v_j}^\intercal$\label{algSLRA:gennorm}
    \EndFor\label{algSLRA:normale}
    \State $A\gets (\langle N_k,E_\ell\rangle)_{k,\ell}\in\matsp_{(p-r)(q-r),d}(\R)$\label{algSLRA:projb}
    \State $b\gets (\langle N_k,\widetilde M-M\rangle)_k\in\matsp_{(p-r)(q-r),1}(\R)$
    \State\label{algSLRA:return} \textbf{return} $
    M+
    \sum_{\ell=1}^d \left(A^\dag\cdot b\right)_\ell\,E_\ell$\label{algSLRA:proje}
    \EndProcedure
  \end{algorithmic}
\end{algorithm}

\begin{algorithm}\caption{one iteration of {\tt NewtonSLRA/2} algorithm}\label{algo:newtonSLRA2}
      \begin{algorithmic}[1]
    \Procedure{{\tt NewtonSLRA/2}}{$M\in E$, $(E_1',\dots, E_{pq-d}')$ an orthonormal basis of $(E^0)^\perp$, $r\in\N$}
    \State $(U,S,V)\gets \SVD(M)$ \label{algSLRA2:rankapproxb}
    \State $S_r\gets$ $r\times r$ top-left sub-matrix of $S$    
    \State $U_r\gets$ first $r$ columns of $U$
    \State $V_r\gets$ first $r$ columns of $V$
    \State $\widetilde M\gets U_r\cdot S_r\cdot V_r^\intercal$\label{algSLRA2:rankapproxe}
    \State $u_1,\ldots,u_p\gets$ columns of $U$\label{algSLRA2:normalb}
    \State $v_1,\ldots,v_q\gets$ columns of $V$
    \State $(T_\ell)_{1\leq \ell\leq (p+q-r)r} \gets \text{list of all matrices of the form }u_i\cdot v_j^\intercal$, where $i\leq r$ or $j\leq r$\label{algSLRA2:gennorm}
     \label{algSLRA2:normale}
    \State $A'\gets (\langle E_k',T_\ell\rangle)_{k,\ell}\in\matsp_{pq-d,(p+q-r)r}(\R)$\label{algSLRA2:projb}
    \State $b'\gets (\langle E_k',M-\widetilde M\rangle)_k\in\matsp_{pq-d,1}(\R)$
    \State\label{algSLRA2:return} \textbf{return} $
    \widetilde M+
    \sum_{\ell=1}^{(p+q-r)r} \left(A'^\dag\cdot b'\right)_\ell\,T_\ell$\label{algSLRA2:proje}
    \EndProcedure
  \end{algorithmic}
\end{algorithm}

\smallskip

{\bf Notes on the pseudo-code of {\tt NewtonSLRA}.} While it is convenient to introduce the matrices
$N_{(i-1)(q-r)+j}=\widetilde{u_i}\cdot \widetilde{v_j}^\intercal$ (and $T_\ell$ for the variant {\tt NewtonSLRA/2}) to
prove the correctness of Algorithms~\ref{algo:newtonSLRA} and \ref{algo:newtonSLRA2}, they do not need
to be explicitely computed. All that the algorithm needs are inner
products of the form $\langle N_\ell,X\rangle$ (or $\langle T_\ell, X\rangle$ in {\tt NewtonSLRA/2})
for various matrices $X$. Such an inner product can be computed efficiently by the formula
$$\langle\widetilde{u_i}\cdot
\widetilde{v_j}^\intercal,X\rangle=\widetilde{u_i}^\intercal\cdot
X\cdot \widetilde{v_j}.$$ Also, the Moore-Penrose pseudo-inverses
$A^\dag$ and $A'^\dag$ do not need to be computed: what is actually
needed is the solution of the linear least square problem $\argmin_x
\norm{x}$ subject to $A\cdot x=b$ (resp. $A'\cdot x=b'$). To our
knowledge, using this trick does not change the asymptotic complexity,
but it can make a notable efficiency improvement in practice.

\begin{prop}[Correctness of {\tt NewtonSLRA}]\label{prop:correct}
  Suppose that $\detvar_r$ and $E$ intersect transversally at a point
  $\zeta\in\detvar_r\cap E$.  There exists an open neighborhood $U$ of
  $\zeta$ such that if $M\in U\cap E$ and $(E_1,\dots,E_d)$ is an
  orthonormal basis of $E^0$, then $\Pi_{E \cap T_{\widetilde
      M}\detvar_r}(M)$ is well-defined, for $\widetilde
  M=\Pi_{\detvar_r}(M)$, and Algorithms \ref{algo:newtonSLRA} and \ref{algo:newtonSLRA2} with
  input $(M,(E_1,\cdots,E_d),r)$ return $\Pi_{E \cap T_{\widetilde
      M}\detvar_r}(M)$.
\end{prop}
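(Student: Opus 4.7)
My plan is to handle well-definedness of $\Pi_{E\cap T_{\widetilde M}\detvar_r}(M)$ separately from the correctness of each variant, and to prove correctness of both variants by reducing each to a consistent minimum-norm linear-algebra problem whose solution is given by the Moore--Penrose pseudo-inverse. For well-definedness, Lemma~\ref{prop:diffpi} gives a neighborhood on which $\widetilde M=\Pi_{\detvar_r}(M)$ is well-defined and close to $\zeta$, and Lemma~\ref{prop:transv} (applied with $\mani=\detvar_r$) furnishes a smaller neighborhood $U$ of $\zeta$ on which $E\cap T_{\widetilde M}\detvar_r$ is non-empty. Since this intersection is an affine subspace of $\matsp_{p,q}(\R)$, non-emptiness alone guarantees that the orthogonal projection onto it is well-defined and unique.

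For the correctness of \texttt{NewtonSLRA/1}, the first step is to identify the normal space $N_{\widetilde M}^0\detvar_r$. Using the SVD $M=U\cdot S\cdot V^\intercal$, a short computation with the standard parametrization of $\detvar_r$ near $\widetilde M$ shows that $H\in\matsp_{p,q}(\R)$ belongs to $T_{\widetilde M}^0\detvar_r$ if and only if $\widetilde{u_i}^\intercal H\,\widetilde{v_j}=0$ for all $i,j$; equivalently, $\langle N_{(i-1)(q-r)+j}, H\rangle=0$. Since the family $(N_k)$ defined at line~\ref{algSLRA:gennorm} is orthonormal (by $\langle u\,v^\intercal,u'v'^\intercal\rangle=(u^\intercal u')(v^\intercal v')$) and has the right cardinality $(p-r)(q-r)=\codim(\detvar_r)$, it forms an orthonormal basis of $N_{\widetilde M}^0\detvar_r$. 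Now write a generic element of $E$ as $M+\sum_{\ell=1}^d c_\ell E_\ell$. Membership in $T_{\widetilde M}\detvar_r$ reads $\langle N_k, M+\sum c_\ell E_\ell-\widetilde M\rangle=0$, which is exactly $A\,c=b$. By orthonormality of $(E_\ell)$, the distance from $M$ to such a point equals $\norm{c}$, so the projection is given by the minimum-norm solution of the linear system $A\,c=b$; this system is consistent by non-emptiness of $E\cap T_{\widetilde M}\detvar_r$, so the minimum-norm solution is $A^\dag b$, which is what the algorithm returns.

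The variant \texttt{NewtonSLRA/2} is proved dually. From the same SVD characterization, $T_{\widetilde M}^0\detvar_r=\Span\{u_i v_j^\intercal:i\leq r\text{ or }j\leq r\}$, and the $(p+q-r)r$ matrices $T_\ell$ are orthonormal. Parametrize points of $T_{\widetilde M}\detvar_r$ as $\widetilde M+\sum c_\ell T_\ell$; membership in $E$ amounts to $\widetilde M+\sum c_\ell T_\ell-M\in E^0$, i.e.\ $\langle E_k',\widetilde M+\sum c_\ell T_\ell-M\rangle=0$ for every vector $E_k'$ of the orthonormal basis of $(E^0)^\perp$, which is precisely $A'c=b'$. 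The crucial observation that turns the distance-minimization into a norm-minimization is that $\widetilde M-M=-\sum_{k>r}\sigma_k u_k v_k^\intercal$ lies in $N_{\widetilde M}^0\detvar_r$ and is therefore orthogonal to every $T_\ell$; hence by Pythagoras
\[
\norm{\widetilde M+\textstyle\sum_\ell c_\ell T_\ell-M}^2=\norm{\widetilde M-M}^2+\norm{c}^2,
\]
and minimizing in $c$ under $A'c=b'$ again yields the minimum-norm solution $A'^\dag b'$, as returned by the algorithm.

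The main non-routine steps are (i) the SVD description of $T_{\widetilde M}^0\detvar_r$ and $N_{\widetilde M}^0\detvar_r$, which is classical but must be written cleanly to justify the orthonormal bases used in the algorithms; and (ii) ensuring consistency of the linear systems $Ac=b$ and $A'c=b'$, which is the whole point of invoking Lemma~\ref{prop:transv} and shrinking the neighborhood~$U$ appropriately. Everything else is a direct Pythagoras/pseudo-inverse computation.
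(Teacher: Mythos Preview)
Your proof is correct and follows essentially the same route as the paper: invoke Lemma~\ref{prop:transv} for well-definedness, identify the normal/tangent spaces of $\detvar_r$ at $\widetilde M$ via the SVD (the paper packages this as Lemma~\ref{prop:dettangent}), parametrize one of the two affine spaces and impose membership in the other by orthogonality conditions, then reduce the projection to a consistent minimum-norm problem solved by the pseudo-inverse.

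One point worth highlighting: for \texttt{NewtonSLRA/2} the paper merely says ``the rest of the proof is similar,'' but the analogy is not quite immediate, since one is minimizing $\norm{F-M}$ with $F=\widetilde M+\sum_\ell c_\ell T_\ell$, and this is \emph{not} $\norm{c}$ on the nose. Your explicit Pythagoras step---observing that $\widetilde M-M\in N_{\widetilde M}^0\detvar_r$ is orthogonal to every $T_\ell$, so $\norm{F-M}^2=\norm{\widetilde M-M}^2+\norm{c}^2$---is exactly the missing justification that makes the minimum-norm reduction legitimate. This is a genuine improvement in clarity over the paper's own argument.
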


The proof of this proposition, together with the cost analysis of the
algorithm, occupy the end of this section. Let $U$ be the neighborhood
of $\zeta$ as defined in Lemma~\ref{prop:transv}. In view of that
lemma, $\Pi_{\detvar_r}$ is well-defined on $U$, and so is the mapping
$M\mapsto\Pi_{E \cap T_{\Pi_{\detvar_r}(M)} \detvar_r}(M)$.  In what
follows, we let $\varphi:U\cap E\rightarrow E$ denote the latter
function; thus, our claim is that Algorithm {\tt NewtonSLRA} computes
the mapping $\varphi$.

The following classical result yields an explicit description of the
tangent and normal spaces of determinantal varieties. The notation $\Hom(\R^q,\R^p)$ stands for the set of $\R$-linear maps from $\R^q$ to $\R^p$.

\begin{lem}\label{prop:dettangent}
 Let $M\in\matsp_{p,q}(\R)$ be such that $\rank(M)=r$. Let $\ell$ be
 the linear application
$$
\begin{array}{rrcl}
  \ell:&\R^q&\longrightarrow&\R^p\\
&v&\longmapsto&M\cdot v
\end{array}
$$
Then the tangent space of $\detvar_r$ at $M$ satisfies
$$
\begin{array}{rcl}
T_M\detvar_r^0&=&\im(\ell)\otimes\R^q+\R^p\otimes\Ker(\ell)^\perp\\
&=&\{\ell'\in\Hom(\R^q,\R^p) \mid \ell'(\Ker(\ell))\subset\im(\ell)\}
\end{array}
$$ and the normal space to $\detvar_r$ at $M$ satisfies
$$N_M\detvar_r^0=\Ker(M^\intercal)\otimes\Ker(M).$$
\end{lem}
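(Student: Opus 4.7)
The plan is to obtain the two descriptions of $T_M\detvar_r^0$ via an explicit local parametrization, then read off the normal space by orthogonal complement, using the inner product $\langle A,B\rangle = \trace(AB^\intercal)$.

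First I would use the rank factorization. Since $\rank(M)=r$, write $M=AB$ with $A\in\matsp_{p,r}(\R)$ and $B\in\matsp_{r,q}(\R)$ both of full rank $r$; near $M$, every matrix in $\detvar_r$ has such a factorization, so $\detvar_r$ is, locally around $M$, the image of the smooth map $(A',B')\mapsto A'B'$ defined on pairs of full-rank matrices. Differentiating gives tangent vectors of the form $\dot M=\dot A\,B+A\,\dot B$ with $\dot A\in\matsp_{p,r}(\R)$, $\dot B\in\matsp_{r,q}(\R)$ arbitrary. Since $A$ has full column rank and $B$ has full row rank, note that $\im(\ell)=\im(M)=\im(A)$ and $\Ker(\ell)=\Ker(M)=\Ker(B)$, hence the row space of $B$ equals $\Ker(\ell)^\perp$.

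Next I would identify the two summands. The term $A\,\dot B$ has every column of the form $A\cdot(\text{column of }\dot B)\in\im(A)=\im(\ell)$, and as $\dot B$ ranges over $\matsp_{r,q}(\R)$ every element of $\im(\ell)\otimes\R^q$ is reached (because $A$ is surjective onto $\im(\ell)$). Symmetrically, $\dot A\,B$ has every row in the row space of $B=\Ker(\ell)^\perp$, and as $\dot A$ varies it sweeps out all of $\R^p\otimes\Ker(\ell)^\perp$. Thus
\[
T_M\detvar_r^0\supset \im(\ell)\otimes\R^q\,+\,\R^p\otimes\Ker(\ell)^\perp.
\]
A short dimension count closes the inclusion: the two summands have dimensions $rq$ and $rp$, their intersection is $\im(\ell)\otimes\Ker(\ell)^\perp$ of dimension $r^2$, so the sum has dimension $rq+rp-r^2=r(p+q-r)=\dim\detvar_r$ (see \cite[Proposition~1.1]{BruVet88}), which forces equality.

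For the second description, I would decompose $\R^q=\Ker(\ell)\oplus\Ker(\ell)^\perp$ and $\R^p=\im(\ell)\oplus\Ker(M^\intercal)$ and write any $\ell'\in\Hom(\R^q,\R^p)$ as a $2\times 2$ block matrix $(\ell'_{ij})_{i,j=1,2}$. A direct check shows that a rank-one matrix $u\otimes v$ lies in the sum $\im(\ell)\otimes\R^q+\R^p\otimes\Ker(\ell)^\perp$ iff $u\in\im(\ell)$ or $v\in\Ker(\ell)^\perp$, i.e.\ iff the block mapping $\Ker(\ell)\to\Ker(M^\intercal)$ vanishes; by linearity this gives
\[
T_M\detvar_r^0=\{\ell'\in\Hom(\R^q,\R^p)\mid \ell'(\Ker(\ell))\subset \im(\ell)\},
\]
which recovers the second stated description.

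Finally, for the normal space, I would take the orthogonal complement with respect to $\langle\cdot,\cdot\rangle$. In the block decomposition above, the three summands $\im(\ell)\otimes\Ker(\ell)$, $\im(\ell)\otimes\Ker(\ell)^\perp$ and $\Ker(M^\intercal)\otimes\Ker(\ell)^\perp$ are pairwise orthogonal, so the orthogonal complement of their direct sum is precisely the remaining block $\Ker(M^\intercal)\otimes\Ker(\ell)=\Ker(M^\intercal)\otimes\Ker(M)$, yielding the claimed formula for $N_M\detvar_r^0$. The only delicate point in the whole argument is the dimension count that promotes the inclusion to an equality; everything else is a straightforward translation between the tensor and block-matrix descriptions.
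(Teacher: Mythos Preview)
Your argument is correct and in fact more complete than what the paper does. The paper does not prove the tangent-space description at all: it simply cites the classical references \cite{ArbCorGriHar84}, \cite[Section~3]{Eis88} and \cite[Ch.~6,\S1]{golubitsky1973stable} for the first two equalities, and only spells out the normal-space claim. Your rank-factorization parametrization $(A',B')\mapsto A'B'$ and the ensuing dimension count give a clean self-contained derivation of $T_M\detvar_r^0$ that the paper omits.

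For the normal space, the paper's argument and yours are essentially the same idea in different clothing. The paper picks bases $\{a_i\}$ of $\Ker(M^\intercal)$ and $\{b_j\}$ of $\Ker(M)$, checks directly that each $a_i\otimes b_j$ is orthogonal to every tangent vector $\sum_k c_k\otimes d_k$ (with $c_k\in\im(M)$ or $d_k\in\Ker(M)^\perp$) via $\langle a_i\otimes b_j,c_k\otimes d_k\rangle=\langle a_i,c_k\rangle\langle b_j,d_k\rangle=0$, and concludes by a dimension count. You instead observe that the four blocks in the decomposition $\Hom(\R^q,\R^p)\cong\bigoplus\Hom(V_j,W_i)$ are mutually orthogonal and read off the complement. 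Both routes are equally short; your block viewpoint has the advantage of dispatching the second tangent-space description and the normal-space formula in one stroke. One minor remark: your sentence ``by linearity this gives\ldots'' is a bit loose, since not every matrix is rank one; the clean justification is exactly the block decomposition you set up just before, so you might phrase that step directly in those terms rather than via rank-one matrices.
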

\begin{proof}
  Classical references for the proof of these claims 
are \cite{ArbCorGriHar84}, \cite[Section 3]{Eis88} and
  \cite[Ch.~6,\S1]{golubitsky1973stable}.
  We recall the proof of the last claim with the notation used in this paper.
  Let $\{a_1,\ldots,
   a_{p-r}\}$ be a basis of $\Ker(M^\intercal)$, and $\{b_1,\ldots,
   b_{q-r}\}$ be a basis of $\Ker(M)$.  Then the set $\{a_i\otimes
   b_j\}_{i,j}$ is a basis of $\Ker(M^\intercal)\otimes\Ker(M)$.  Now
  let $v\in T_M\detvar_r^0$ be a tangent vector. In view of the first
  claim, it can be rewritten as a finite sum $\sum_k c_k\otimes d_k\in
  T_M\detvar_r^0$ where $c_k\in\im(M)$ or
  $d_k\in\Ker(M)^\perp$. Consequently, $\langle a_i\otimes b_j,
  v\rangle=\sum_k \langle a_i,c_k\rangle\langle b_j,d_k\rangle=0$ and
  thus $\Ker(M^\intercal)\otimes\Ker(M)\subset
  N_M\detvar_r^0$. Finally, since
  $\dim(N_M\detvar_r^0)=(p-r)(q-r)=\dim(\Ker(M^\intercal)\otimes\Ker(M))$,
  we obtain $\Ker(M^\intercal)\otimes\Ker(M)=N_M\detvar_r^0$.
\end{proof}

\begin{proof}[Proof of Proposition \ref{prop:correct}] We are now able to prove the correctness of the two variants of {\tt NewtonSLRA}. As in the
algorithm, let us define $\widetilde M= U_r\cdot S_r\cdot
V_r^\intercal$, where $S_r$ is the $r\times r$ top-left sub-matrix of
$S$, and $U_r$ and $V_r$ are made of the first $r$ columns of
respectively $U$ and $V$. Then, by the Eckart-Young Theorem, for $M\in
U$, the matrix $\widetilde M$ is equal to
$\Pi_{\detvar_r}(M)$. Besides, by construction, the vectors
$\widetilde{u_1},\ldots,\widetilde{u_{p-r}}$
(resp. $\widetilde{v_1},\ldots,\widetilde{v_{q-r}}$) are a basis of
$\Ker(\widetilde M^\intercal)$ (resp. $\Ker(\widetilde M)$). Then, the
previous lemma implies that the matrices $N_\ell$ in {\tt NewtonSLRA/1} (resp. $T_\ell$ in {\tt NewtonSLRA/2}) are a basis of the
normal space $N_{\widetilde M}\detvar_r$ (resp. a basis of the tangent space  $T_{\widetilde M}\detvar_r$).

Let $\varphi(M)$ denote $\Pi_{E \cap T_{\widetilde M} \detvar_r}(M)$. In
order to conclude, we have to prove that the matrix computed at 
line~\ref{algSLRA:return} is $\varphi(M)$, that is, that (with the
notation of the algorithms)
$$M+\sum_{\ell=1}^d \left(A^\dag\cdot b\right)_\ell\,E_\ell
= \widetilde M+
    \sum_{\ell=1}^{(p+q-r)r} \left(A'^\dag\cdot b'\right)_\ell\,T_\ell = \Pi_{E \cap T_{\widetilde M} \detvar_r}(M).$$ An element $F$ of
$\matsp_{p,q}(\R)$ belongs to $E \cap T_{\widetilde M} \detvar_r$ if
and only if $F-M$ is in $E^0$ and $F-\widetilde M$ is in
$T_{\widetilde M} \detvar_r^0$. The first condition is equivalent to
the existence of $a_1,\dots,a_d$ such that $F-M=\sum_{i=j}^{d} a_j
E_j$ and the second one holds when
$$\forall i\in
\{1,\ldots,(p-r)(q-r)\}, \begin{array}[t]{rcl}\langle N_i,
  F-\widetilde M\rangle&=&0;
\end{array}$$ 
taking into account the first constraint, the latter ones become, for
all $i\in \{1,\ldots,(p-r)(q-r)\}$,
$$ \begin{array}[t]{rcl} 
\langle  N_i, M-\widetilde M\rangle+\sum_{j=1}^{d}a_j\langle N_i,  E_j\rangle &=& \langle N_i, M-\widetilde M\rangle+\langle N_i, F-M\rangle\\
&=& 0.
\end{array}$$ 
As in the algorithm, set $$A=\begin{bmatrix}\langle N_1,E_1\rangle&\dots&\langle
 N_1,E_{d}\rangle\\ \vdots&\vdots&\vdots\\ \langle
 N_{(p-r)(q-r)},E_1\rangle&\dots&\langle
 N_{(p-r)(q-r)},E_{d}\rangle
\end{bmatrix}\quad\text{and}\quad b=\begin{bmatrix}\langle N_1, \widetilde M-M\rangle\\\vdots\\\langle N_{(p-r)(q-r)}, \widetilde M-M\rangle
\end{bmatrix}.$$
Then, the previous discussion shows that $F$ belongs to $E \cap T_{ \widetilde
  M}  \detvar_r$ if and only if
$$F = M + \sum_{i=j}^{d} a_j E_j,$$ where $a_1,\dots,a_d$ satisfy the
linear system
$$A\cdot \begin{bmatrix} a_1\\\vdots\\a_{d}\end{bmatrix}=b.$$ By
construction, $\varphi(M)$ is the matrix satisfying these constraints
that minimizes $\norm{\varphi(M)-M}$. Since $(E_1,\ldots, E_d)$ is an
orthonormal basis, $\norm{\varphi(M)-M}^2=\sum_{i=1}^{d} a_i^2$ and
hence the least square condition on $\varphi(M)-M$ amounts to finding
the solution $a_1,\dots,a_d$ of the former linear system that
minimizes the 2-norm (we know that this linear system is consistent,
since $E \cap T_{ \widetilde M} \detvar_r$ is not empty).
The least-square solution can be obtained with the
Moore-Penrose pseudo-inverse of $A$, so we finally deduce that
$$\begin{bmatrix}
  a_1\\\vdots\\a_{d}
\end{bmatrix}=A^\dag\cdot b,$$
and hence $\varphi(M)=M+\sum_{i=1}^d \left(A^\dag\cdot b\right)_i\,E_i.$
This proves the correctness of {\tt NewtoNSLRA/1}.

The correctness of {\tt NewtonSLRA/2} is proved similarly, by writing 
$F-\widetilde M=\sum_{\ell=1}^{(p+q-r)r} a'_\ell T_\ell$
for unknown values $a'_\ell\in\R$. The condition $F-M=\widetilde M-M+\sum_{\ell=1}^{(p+q-r)r} a'_\ell T_\ell\in E$ becomes
$$\langle \widetilde M-M, E_i'\rangle + \sum_{\ell=1}^{(p+q-r)r} a'_\ell \langle T_\ell, E_i'\rangle=0$$
and the rest of the proof is similar to the one above.

\end{proof}


\paragraph{Complexity.}
All subroutines that appear in {\tt NewtonSLRA} are linear algebra
algorithms. In particular, one iteration needs to compute:
\begin{itemize}
\item the Singular Value Decomposition of the $p\times q$ matrix $M$;
\item the matrix $\widetilde{M}$;
\item $O(d (p-r)(q-r))$ inner products between matrices of size $p\times
  q$ (with $d=\dim(E)$), of the form $\langle
  N_{(i-1)(q-r)+j},E_\ell\rangle$ or $\langle
  N_{(i-1)(q-r)+j},\widetilde{M}-M\rangle$, with
  $N_{(i-1)(q-r)+j}=\widetilde{u_i}\cdot \widetilde{v_j}^\intercal$;
\item the Moore-Penrose pseudoinverse of the $(p-r)(q-r)\times d$
  matrix $A$ ({\tt NewtonSLRA/1}) or the $(p+q-r)r\times (pq-d)$
  matrix $A'$ ({\tt NewtonSLRA/2});
\item the output $\varphi(M)= M+\sum_{i=1}^d \left(A^\dag\cdot b\right)_i\,E_i$.
\end{itemize}

As explained in the introduction, we would want to give simple
complexity statements, counting arithmetic operations
$+,-,\times,\div$ over the reals at unit cost, avoiding the discussion
of accuracy inherent to floating-point arithmetic. This is not
possible for the Singular Value Decomposition, so we will simply take
this computation as a black-box. 

Computing $\widetilde{M}$ can be done in $O(pqr)$ arithmetic
operations.  For
$N_\ell=\widetilde{u_i}\cdot\widetilde{u_j}^\intercal$, the inner
products of the form $\langle N_\ell,X\rangle$ can be computed by the
formula
$$\langle\widetilde{u_i}\cdot
\widetilde{v_j}^\intercal,X\rangle=\widetilde{u_i}^\intercal\cdot
X\cdot \widetilde{v_j}$$ in $O(pq)$ arithmetic operations, for a total
of $O(p q d(p-r) (q-r))$ for the construction of the matrix $A$. 
Similarly, the matrix $A'$ in {\tt NewtonSLRA/2} can be constructed within 
$O(pqr(pq-d)(p+q-r))$ operations.
 The Moore-Penrose pseudoinverse of $A$ (resp. $A'$) can then be
computed in $O(d (p-r)^2(q-r)^2)$ arithmetic operations (resp. $O((pq-d)^2(p+q-r)r)$), and deducing
$\varphi(M)$ can be done in $O(dpq)$ operations in {\tt NewtonSLRA/1} (resp. $O(pqr(p+q-r))$ in {\tt NewtonSLRA/2}).

Altogether, up to the SVD computation, all operations can be achieved
within 
\begin{itemize}
\item $O(pqd(p-r)(q-r)+pqr)$ arithmetic operations for {\tt NewtonSLRA/1};
\item $O(pqr(pq-d)(p+q-r))$ arithmetic operations for {\tt NewtonSLRA/2}.
\end{itemize}
In particular, the cost of {\tt NewtonSLRA/1} is at most quadratic in
the size of the input (specifying the basis $E_1,\dots,E_d$ of $E^0$
requires $O(dpq)$ entries).


\section{Rate of convergence}\label{sec:proofquad}
The aim of this section is to prove the local quadratic convergence of
{\tt NewtonSLRA} and to control the distance between its output and
the optimal solution of the SLRA problem. The results given in this
part of the paper are more general than the SLRA context: as in
\cite{LewMal08}, we will perform our analysis for a manifold $\mani$
in a Euclidean space $\E$ of class $C^3$, instead of
$\detvar_r$; as before, we let $E$ be a proper affine subspace of
$\E$.  We assume without loss of generality that $\mani \ne \E$.

Let $\zeta\in E \cap \mani$ be such that the intersection of $E$ and
$\mani$ is transverse at $\zeta$. By Lemmas~\ref{prop:diffpi}
and~\ref{prop:transv}, we know that in a neighborhood $U$ of $\zeta$,
the mapping $x\mapsto \Pi_\mani(x)$ is well-defined and of class $C^2$, and the intersection $E \cap T_{\Pi_\mani(x)} \mani$ is
not empty. As a result, the projection $\varphi: x \mapsto \Pi_{E \cap
  T_{\Pi_\mani(x)} \mani}(x)$ is itself well-defined over $U$. We saw
in the previous section that in the case $\mani=\detvar_r$, algorithm
{\tt NewtonSLRA} precisely computes the mapping $\varphi$.  In the
more general context of this section, we study the iterates
$\varphi^n=\varphi\circ\dots\circ\varphi$ (which will turn out to be
well-defined, up to restricting the domain of $\varphi$).

The transversality assumption implies that, up to restricting $U$, the
intersection $\maniinter=E \cap \mani \cap U$ is a manifold of class
 $C^3$. Up to restricting $U$ further, we can assume (by means
of Lemma~\ref{prop:diffpi}) that the projection operator
$\Pi_{\maniinter}$ is well-defined and of class $C^2$ in $U$.
In the context of Structured Low Rank Approximation, $\maniinter=E
\cap \detvar_r \cap U$, and the projection $\Pi_{\maniinter}$
represents the optimal solution to our approximation problem.

The following theorems are the main results of this section; taken in
the context of SLRA, they finish proving the theorems stated in the
introduction.

The first part of the following theorem ensures the local quadratic
convergence of the iterates of $\varphi$; the second part bounds the
distance between the limit point of the iteration and the optimal
solution $\Pi_\maniinter(x_0)$. Roughly speaking, this shows that
locally the limit of the iteration looks like the orthogonal
projection on $\maniinter$. This will be formalized in Theorem
\ref{theo:diffPhi}.

\begin{theo}\label{theo:rate}
  Let $\zeta$ be in $E \cap \mani$ such that $\Pi_\mani$ is $C^2$
  around $\zeta$ and $\mani$ and $E$ intersect transversally at
  $\zeta$. There exists $\nu,\gamma,\gamma'>0$ such that, for all
  $x_0\in B_\nu(\zeta)$, the sequence $(x_i)$ given by
  $x_{i+1}=\varphi(x_i)$ is well-defined and converges towards a point
  $x_\infty\in\maniinter$, with
  \begin{itemize} 
  \item $\norm{x_{i+1}-x_\infty} \le \gamma\norm{x_i-x_\infty}^2$ for $i \ge 0$;
  \item $\norm{\Pi_\maniinter(x_0)-x_\infty} \le \gamma'\norm{\Pi_\maniinter(x_0)-x_0}^2$. 
  \end{itemize}
\end{theo}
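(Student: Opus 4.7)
The plan is to prove Theorem~\ref{theo:rate} in five stages: (i) show that $\varphi$ is $C^2$ on a neighborhood of $\zeta$ and fixes $\maniinter$ pointwise locally; (ii) compute the Fréchet derivative $D\varphi(y) = \Pi_{T_y\maniinter^0}$ for every $y \in \maniinter$; (iii) derive a single-step quadratic improvement of the distance to $\maniinter$ via Taylor expansion; (iv) combine this with a Cauchy argument to prove the first bullet; and (v) telescope a Lipschitz estimate on $\Pi_\maniinter$ to obtain the second bullet.

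For~(i), I would invoke Lemma~\ref{prop:transv} to obtain a neighborhood $U$ of $\zeta$ on which $\Pi_\mani(x)$ and $E \cap T_{\Pi_\mani(x)}\mani$ are well-defined; the smooth orthonormal bases produced by Lemma~\ref{lemma:bases} then furnish a $C^2$ parametrization of the orthogonal projection onto $E \cap T_{\Pi_\mani(x)}\mani$, so $\varphi$ is $C^2$ on $U$. Moreover, for $y \in \maniinter$ we have $y \in E$ and $y \in \mani \subset T_y\mani$, hence $y \in E \cap T_y\mani$ and $\varphi(y) = y$.

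The hardest step is~(ii). The tangential part is automatic: since $\varphi|_{\maniinter} = \mathrm{id}$, differentiating along any curve in $\maniinter$ shows that $D\varphi(y)$ is the identity on $T_y\maniinter^0$. The delicate part is to prove that $D\varphi(y)$ annihilates $(T_y\maniinter^0)^\perp$. I would write $\varphi$ locally as $\varphi(x) = a(\Pi_\mani(x)) + \Pi_{E^0 \cap T_{\Pi_\mani(x)}\mani^0}(x - a(\Pi_\mani(x)))$, where $a$ is a $C^2$ section of $E \cap T_{(\cdot)}\mani$ built from the bases of Lemma~\ref{lemma:bases} with $a(y) = y$. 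Differentiating at $y$, and exploiting $D\Pi_\mani(y) = \Pi_{T_y\mani^0}$ (Lemma~\ref{prop:diffpi}) together with the transversality identity $T_y\maniinter^0 = E^0 \cap T_y\mani^0$, the terms involving $Da$ must cancel---since the value of $\varphi$ does not depend on the choice of $a$---leaving $D\varphi(y) = \Pi_{T_y\maniinter^0}$ on all of $\E$.

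Step~(iii) then follows from a second-order Taylor expansion of $\varphi$ around $y := \Pi_\maniinter(x)$: writing $d(x) := \norm{x - \Pi_\maniinter(x)}$, one has $\varphi(x) = y + \Pi_{T_y\maniinter^0}(x - y) + O(d(x)^2)$, and the optimality condition $x - y \perp T_y\maniinter^0$ annihilates the linear term, giving $\norm{\varphi(x) - \Pi_\maniinter(x)} \le C\,d(x)^2$; in particular $d(\varphi(x)) \le C\,d(x)^2$. For~(iv), choosing $\nu$ so that $C\,\nu \le 1/2$ guarantees doubly exponential decay of $d(x_n)$; combined with $\norm{x_{n+1} - x_n} \le \norm{\varphi(x_n) - \Pi_\maniinter(x_n)} + \norm{\Pi_\maniinter(x_n) - x_n} = O(d(x_n))$, this makes $(x_n)$ a Cauchy sequence converging to some $x_\infty \in \maniinter$, and summing the tail while using $d(x_n) \le \norm{x_n - x_\infty}$ yields $\norm{x_{n+1} - x_\infty} \le \gamma\,\norm{x_n - x_\infty}^2$. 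For~(v), the Lipschitz continuity of $\Pi_\maniinter$ applied to $\norm{x_k - \Pi_\maniinter(x_{k-1})} = O(d(x_{k-1})^2)$ yields $\norm{\Pi_\maniinter(x_k) - \Pi_\maniinter(x_{k-1})} = O(d(x_{k-1})^2)$, and summing this super-convergent series together with $\Pi_\maniinter(x_\infty) = x_\infty$ produces $\norm{\Pi_\maniinter(x_0) - x_\infty} \le \gamma'\,d(x_0)^2$.
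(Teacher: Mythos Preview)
Your proposal is correct and takes a genuinely different route from the paper's. You obtain the single-step quadratic estimate $\norm{\varphi(x)-\Pi_\maniinter(x)}\le C\,d(x)^2$ by first proving $D\varphi(y)=\Pi_{T_y\maniinter^0}$ for $y\in\maniinter$ and then applying a second-order Taylor expansion of $\varphi$ at $y=\Pi_\maniinter(x)$, using that $x-y\perp T_y\maniinter^0$ kills the linear term. The paper never computes $D\varphi$; instead it introduces the angle between linear subspaces, isolates constants $\alpha_0=\inf_v\alpha(T_v\mani^0,E^0)$, $C_\mani=\sup\norm{D^2\Pi_\mani}$, and a Lipschitz constant $\lambda$ for the angle between $E^0\cap T_x\mani^0$ and $(E^0\cap T_y\mani^0)^\perp$ (Lemmas~\ref{lem:transverse}--\ref{lem:chi}), and then proves Proposition~\ref{prop:convquad} through nine explicit geometric steps tracking the auxiliary points $y=\Pi_\mani(x)$, $w=\Pi_\maniinter(x)$, $z=\Pi_{T_y\mani}(w)$ and two further projections $w',z'$. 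Your approach is shorter and more conceptual, and essentially delivers Theorem~\ref{theo:diffPhi} as a by-product; the paper's route, by contrast, produces the explicit constant $K=C_\mani/\sin(\alpha_0)+\sqrt{2}\lambda$, which makes the dependence on the transversality angle visible. Two places in your sketch need tightening: the cancellation of the $Da$ terms is correct but is best justified by choosing $a(v)=\Pi_{E\cap T_v\mani}(v)$ so that $a|_{\maniinter}=\mathrm{id}$ and then checking directly that $Da(y)$ takes values in $E^0\cap T_y\mani^0=T_y\maniinter^0$ (whence $Da-\Pi_{T_y\maniinter^0}\circ Da=0$); and in step~(iv) you must verify, as the paper does in Proposition~\ref{prop:iterqud}, that the iterates remain in the ball on which your uniform bound on $D^2\varphi$ is valid.
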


In general, $x_\infty\neq \Pi_\maniinter(x_0)$; in particular, {\tt NewtonSLRA}
will usually not converge to the optimal solution of an SLRA
problem. Nevertheless, the following theorem shows that $\Phi$ is a
good local approximation of the function $\Pi_\maniinter$ around
$\maniinter$.

\begin{theo}\label{theo:diffPhi}
  Let $\zeta$ be in $E \cap \mani$ such that $\Pi_\mani$ is $C^2$
  around $\zeta$ and $\mani$ and $E$ intersect transversally at
  $\zeta$, and let $\Phi: B_\nu(\zeta) \to \E$ denote the limit
  operator $\Phi(x)=x_\infty$, for $x_\infty$ as in
  Theorem~\ref{theo:rate}. Then, $\Phi$ is differentiable at $\zeta$
  and $D\Phi(\zeta)=\Pi_{T_\zeta\maniinter^0}$.
\end{theo}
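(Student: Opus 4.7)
The plan is to show that Theorem~\ref{theo:rate} already does essentially all the work, so that the differentiability of $\Phi$ at $\zeta$ reduces to the differentiability of $\Pi_\maniinter$ at $\zeta$, which is provided by Lemma~\ref{prop:diffpi}. The key observation is that the quadratic estimate
$$\norm{\Phi(x_0)-\Pi_\maniinter(x_0)} \le \gamma' \norm{\Pi_\maniinter(x_0)-x_0}^2$$
controls the discrepancy between $\Phi$ and $\Pi_\maniinter$ to second order in $\norm{x_0-\zeta}$, so $\Phi$ has the same first-order behavior at $\zeta$ as $\Pi_\maniinter$.

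First I would verify that $\Phi(\zeta)=\zeta$. Indeed, $\zeta\in\mani$ gives $\Pi_\mani(\zeta)=\zeta$, and then $\zeta\in E\cap T_\zeta\mani$, so $\varphi(\zeta)=\Pi_{E\cap T_\zeta\mani}(\zeta)=\zeta$. Iterating, the whole sequence $(\varphi^n(\zeta))$ is constant at $\zeta$, hence $\Phi(\zeta)=\zeta$. The target derivative $\Pi_{T_\zeta\maniinter^0}$ is exactly the derivative $D\Pi_\maniinter(\zeta)$ produced by Lemma~\ref{prop:diffpi}, so the claim to prove is
$$\Phi(x)=\zeta+\Pi_{T_\zeta\maniinter^0}(x-\zeta)+o(\norm{x-\zeta}).$$

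Next I would apply Theorem~\ref{theo:rate}: up to shrinking $B_\nu(\zeta)$, for every $x_0\in B_\nu(\zeta)$ we have $\norm{\Phi(x_0)-\Pi_\maniinter(x_0)}\le \gamma'\norm{\Pi_\maniinter(x_0)-x_0}^2$. Since $\zeta\in\maniinter$, the distance satisfies $\norm{\Pi_\maniinter(x_0)-x_0}=\dist(x_0,\maniinter)\le\norm{x_0-\zeta}$, so
$$\norm{\Phi(x_0)-\Pi_\maniinter(x_0)}\le \gamma'\norm{x_0-\zeta}^2 = o(\norm{x_0-\zeta}).$$

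Finally, Lemma~\ref{prop:diffpi} applied to the $C^2$ manifold $\maniinter$ (which is $C^2$ by the transversality assumption and the standard implicit function argument used in Lemma~\ref{lemma:bases}) gives
$$\Pi_\maniinter(x_0)=\zeta+\Pi_{T_\zeta\maniinter^0}(x_0-\zeta)+o(\norm{x_0-\zeta}).$$
Adding the two estimates yields the desired first-order expansion of $\Phi$ at $\zeta$, proving differentiability and identifying $D\Phi(\zeta)=\Pi_{T_\zeta\maniinter^0}$. There is no genuine obstacle here beyond checking that the hypotheses of Lemma~\ref{prop:diffpi} apply to $\maniinter$ at $\zeta$ (class $C^2$ and $\zeta\in\maniinter$), and that the constant $\gamma'$ from Theorem~\ref{theo:rate} is uniform on a neighborhood of $\zeta$ so that the quadratic bound may be applied to every $x_0\in B_\nu(\zeta)$ simultaneously, both of which are already granted.
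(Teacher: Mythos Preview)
Your proposal is correct and follows essentially the same approach as the paper: both split $\Phi(x)-\zeta-\Pi_{T_\zeta\maniinter^0}(x-\zeta)$ via the triangle inequality into $\norm{\Phi(x)-\Pi_\maniinter(x)}$ (bounded by $\gamma'\norm{x-\Pi_\maniinter(x)}^2\le\gamma'\norm{x-\zeta}^2$ from Theorem~\ref{theo:rate}) and the linearization error of $\Pi_\maniinter$ at $\zeta$. The only cosmetic difference is that the paper bounds the second term explicitly by $\frac{C'_\maniinter}{2}\norm{x-\zeta}^2$ using the second derivative of $\Pi_\maniinter$, whereas you invoke Lemma~\ref{prop:diffpi} to write it as $o(\norm{x-\zeta})$; either suffices for differentiability.
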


Note that in the context of SLRA, $\detvar_r$ 
and $E \cap \detvar_r$ are of class $C^\infty$ in the neighborhood of
points $\zeta\in E \cap\detvar_r$ where the intersection is transverse.


\subsection{Angle between linear subspaces}

Our analysis will rely on the notion of {\em angle} between two linear
subspaces (see \emph{e.g.} \cite{Fri37}, \cite[Ch.~9]{Deutsch01}, \cite[Section
  3]{LewMal08}).  In what follows, $\Sphere=\{x\in\E: \norm{x}=1\}$
denotes the unit sphere and $M^\perp$ denotes the orthogonal
complement of a linear subspace $M$ of $\E$.
\begin{defi}[angle between linear subspaces]
  Let $M, N\subset\E$ be two linear subspaces.  If $N\subset M$ or
  $M\subset N$, we set $\alpha(M,N)=0$.  Otherwise, their \emph{angle}
  $\alpha(M,N)$ is the value in $[0,\pi/2]$ defined by
  $$\alpha(M,N):=\arccos\left(\max\{\langle x,y\rangle:x\in\Sphere\cap
  M\cap (M\cap N)^\perp, y\in\Sphere\cap N\cap (M\cap
  N)^\perp\}\right).$$
\end{defi}
The following lemma (see \cite[Lemma 9.5]{Deutsch01} for a proof) shows that when we consider the maximum of the
scalar products, we only need one vector to be orthogonal to $M\cap
N$.
\begin{lem}\label{lem:angle2}
 If $x$ is in $\Sphere\cap M\cap (M\cap N)^\perp$ and $y$ is in
 $\Sphere\cap N$, then
 $$\langle x,y\rangle\leq \cos(\alpha(M,N)).$$
\end{lem}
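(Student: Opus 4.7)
The plan is to reduce the general $y\in \Sphere \cap N$ to the more restrictive situation used in the definition of $\alpha(M,N)$, namely $y \in \Sphere \cap N \cap (M \cap N)^\perp$, by orthogonally splitting $y$ along $M \cap N$.

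First I would decompose $y$ inside $N$ as $y = y_1 + y_2$, where $y_1$ is the orthogonal projection of $y$ on $M \cap N$ and $y_2 = y - y_1 \in N \cap (M \cap N)^\perp$. This is a genuine orthogonal decomposition, so in particular $\norm{y_2}^2 = 1 - \norm{y_1}^2 \le 1$. Then, since the hypothesis $x \in (M \cap N)^\perp$ forces $\langle x, y_1\rangle = 0$, we immediately get
\[
\langle x,y\rangle \;=\; \langle x,y_2\rangle.
\]

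Next I would treat two cases. If $y_2 = 0$, then $\langle x,y\rangle = 0 \le \cos(\alpha(M,N))$ because $\alpha(M,N) \in [0,\pi/2]$ implies $\cos(\alpha(M,N)) \ge 0$. Otherwise, set $\tilde y = y_2/\norm{y_2}$, which lies in $\Sphere \cap N \cap (M\cap N)^\perp$; by the very definition of $\alpha(M,N)$ (where the max is taken over pairs of unit vectors in $M\cap (M\cap N)^\perp$ and $N\cap (M\cap N)^\perp$), we obtain $\langle x,\tilde y\rangle \le \cos(\alpha(M,N))$, and hence
\[
\langle x,y\rangle \;=\; \norm{y_2}\,\langle x,\tilde y\rangle \;\le\; \norm{y_2}\cos(\alpha(M,N)) \;\le\; \cos(\alpha(M,N)),
\]
using once more $\cos(\alpha(M,N)) \ge 0$ and $\norm{y_2}\le 1$.

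Finally I would check the degenerate inclusion cases where $\alpha(M,N)$ was defined to be $0$. If $M \subset N$, then $M \cap N = M$ and the condition $x \in M \cap (M\cap N)^\perp = M \cap M^\perp = \{0\}$ is incompatible with $\norm{x}=1$, so the statement is vacuous; if $N \subset M$, then $M \cap N = N$ and $y \in N$ forces $y_2 = 0$, bringing us back to the first case above. The main (minor) obstacle is just being careful that the bound $\langle x,y_2\rangle \le \norm{y_2}\cos(\alpha(M,N))$ is preserved after multiplying by $\norm{y_2}$, which is ensured by the non-negativity of $\cos(\alpha(M,N))$ on $[0,\pi/2]$.
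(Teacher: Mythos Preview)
Your proof is correct. The paper does not actually give its own argument for this lemma; it simply cites \cite[Lemma~9.5]{Deutsch01}. The standard proof there proceeds by exactly the orthogonal decomposition $y=y_1+y_2$ along $M\cap N$ that you carry out, so your approach coincides with the intended one, and your handling of the degenerate inclusion cases is appropriate.
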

 
We can now describe a few consequences of our transversality
assumptions for angles between various subspaces.
\begin{lem}\label{lem:transverse}
  There exists an open neighborhood $U$ of $\zeta$ such that
  $\inf_{x\in \mani \cap U}\alpha(T_x \mani^0,E^0)>0$.
\end{lem}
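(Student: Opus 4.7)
The strategy is to realize $\cos\bigl(\alpha(T_x\mani^0, E^0)\bigr)$ as a continuous function of $x$ that is strictly less than $1$ at $\zeta$, and then invoke continuity to obtain a uniform lower bound on a neighborhood of $\zeta$. The main tool will be Lemma~\ref{lemma:bases}, which supplies continuously varying orthonormal bases of the relevant subspaces around $\zeta$.

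Applying Lemma~\ref{lemma:bases} (with $k=1$), I obtain an open neighborhood $U_0$ of $\zeta$ together with continuous functions $e_1,\dots,e_t,e'_{t+1},\dots,e'_d,e''_{t+1},\dots,e''_s : U_0 \to \E$ such that, for $x \in \mani \cap U_0$, $(e_1(x),\dots,e_t(x))$ is an orthonormal basis of $E^0 \cap T_x\mani^0$, its completion by the $e'_j(x)$'s (resp.\ the $e''_k(x)$'s) being an orthonormal basis of $E^0$ (resp.\ of $T_x\mani^0$). The orthogonal complement of $E^0 \cap T_x\mani^0$ inside $T_x\mani^0$ (resp.\ inside $E^0$) is then spanned by $e''_{t+1}(x),\dots,e''_s(x)$ (resp.\ by $e'_{t+1}(x),\dots,e'_d(x)$). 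Expanding arbitrary unit vectors of these complements in the corresponding bases with unit coefficient vectors $a \in \R^{s-t}$, $b \in \R^{d-t}$, the definition of the angle reduces to
\[
\cos\bigl(\alpha(T_x\mani^0, E^0)\bigr) \;=\; \max_{\|a\|=\|b\|=1}\, a^\intercal G(x)\, b \;=\; \sigma_1\bigl(G(x)\bigr),
\]
where $G(x)$ is the Gram matrix $\bigl(\langle e''_{t+i}(x),\, e'_{t+j}(x)\rangle\bigr)_{i,j}$. Since $G$ is continuous in $x$ and the largest singular value depends continuously on the matrix entries, $x \mapsto \sigma_1(G(x))$ is continuous on $\mani \cap U_0$.

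The key step is to check that $\sigma_1(G(\zeta)) < 1$: if equality held, compactness of the unit sphere would provide unit vectors $u \in T_\zeta\mani^0$ and $v \in E^0$, both orthogonal to $E^0 \cap T_\zeta\mani^0$, with $\langle u, v\rangle = 1$; then $u = v$ lies in $E^0 \cap T_\zeta\mani^0$ while being orthogonal to it, forcing $u = 0$ and contradicting $\|u\| = 1$. Setting $c := \sigma_1(G(\zeta)) \in [0,1)$ and shrinking $U_0$ by continuity to a neighborhood $U$ of $\zeta$ on which $\sigma_1(G(x)) \le (1+c)/2$, we conclude $\inf_{x\in\mani\cap U}\alpha(T_x\mani^0,E^0) \ge \arccos\bigl((1+c)/2\bigr) > 0$. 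The only subtle step is the reduction of the angle to a singular-value problem; I expect everything else to be routine continuity and compactness.
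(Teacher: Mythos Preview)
Your proof is correct and follows essentially the same route as the paper: invoke Lemma~\ref{lemma:bases} to get continuously varying orthonormal bases, express $\cos\bigl(\alpha(T_x\mani^0,E^0)\bigr)$ as a continuous function of $x$, verify it is strictly below~$1$ at~$\zeta$, and conclude by continuity. The only cosmetic difference is that the paper realizes this cosine as the operator norm of $\Pi_{E^0}\Pi_{T_x\mani^0}-\Pi_{E^0\cap T_x\mani^0}$ (appealing to~\cite{Deutsch01}), whereas you compute it directly from the definition as the top singular value of the Gram matrix $G(x)$; these are equivalent descriptions of the same quantity, your version being slightly more self-contained.
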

\begin{proof} 
  First, notice that the angle $\alpha(M,N)$ between two linear
  subspaces $M$ and $N$ cannot be $0$ if $M\not\subset N$ and
  $N\not\subset M$. Since by assumption $\mani\neq \E$ and $E\neq \E$,
  and $\mani$ and $E$ intersect transversely at $\zeta$, we have
  neither $T_\zeta \mani^0 \subset E^0$ nor $E^0 \subset T_\zeta
  \mani^0$.  We deduce that $\alpha(T_\zeta^0 \mani, E^0)\neq 0$.

  The rest of the proof is similar to that of~\cite[Lemma
    10]{LewMal08}.  Recall from Lemma~\ref{lemma:bases} that for $x$
  in a neighborhood $U_0$ of $\zeta$, we know orthonormal families
  $(e_1(x),\dots,e_t(x))$,
  $(e_1(x),\dots,e_t(x),e'_{t+1}(x),\dots,e'_d(x))$ and
  $(e_1(x),\dots,e_t(x),e''_{t+1}(x),\dots,e''_s(x))$, that vary
  continuously with $x$, and that are bases of respectively $E^0 \cap
  T_x \mani^0$, $E^0$ and $T_x \mani^0$ whenever $x$ is in $\mani \cap
  U_0$.

  For $x$ in $U_0$, consider the linear mapping $\pi_x=\Pi_{S'(x)}
  \Pi_{S''(x)} - \Pi_{S(x)}$, where $S(x),S'(x),S''(x)$ are the vector
  spaces spanned by the three families above. The matrix of this
  linear mapping, and thus its operator norm, vary continuously with $x$.

  Now, when $x$ is in $\mani\cap U_0$, $\pi_x$ is the linear mapping
  $\Pi_{E^0} \Pi_{T_x \mani^0} - \Pi_{E^0 \cap T_x \mani^0}$.
  From~\cite[Ch.~9]{Deutsch01}, we know that the norm of this
  operator is the cosine of $\alpha(T_x \mani^0,E^0)$. This shows that
  at $x=\zeta$, the norm of $\pi_x$ is nonzero; by continuity, this 
  remains true in a neighborhood $U\subset U_0$ of $\zeta$.
\end{proof}

\begin{lem}\label{lem:trivial}
  There exists an open neighborhood $U$ of $\zeta$ such that for any $x$ and
  $y$ in $\mani \cap U$, the intersection of the vector spaces $E^0
  \cap T_x\mani^0 $ and $ \left(E^0 \cap T_y\mani^0 \right)^\perp$ is
  trivial.
\end{lem}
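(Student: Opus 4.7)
The plan is to exhibit continuously varying orthonormal bases of the spaces $E^0 \cap T_x\mani^0$ for $x$ near $\zeta$ on $\mani$, and then reduce the triviality of the intersection to the invertibility of a Gram-type matrix that is the identity at $x=y=\zeta$.

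First I would invoke Lemma~\ref{lemma:bases}, applied in a neighborhood $U_0$ of $\zeta$, to obtain a continuous (in fact $C^{k-1}$) family of orthonormal vectors $(e_1(x),\dots,e_t(x))$ such that, whenever $x\in\mani\cap U_0$, this family is an orthonormal basis of the $t$-dimensional space $E^0\cap T_x\mani^0$. For any $x,y$ in $\mani\cap U_0$, I would then consider the $t\times t$ Gram matrix
\[
G(x,y)=\bigl(\langle e_i(x),e_j(y)\rangle\bigr)_{1\le i,j\le t}.
\]
The entries of $G$ depend continuously on $(x,y)$, and at the diagonal point $x=y=\zeta$ one has $G(\zeta,\zeta)=I_t$, which is invertible. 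By continuity of the determinant, there exists an open neighborhood $U\subset U_0$ of $\zeta$ such that $G(x,y)$ is invertible for every $x,y\in\mani\cap U$.

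The key step is then to observe that invertibility of $G(x,y)$ is equivalent to the desired triviality. Indeed, if $v\in (E^0\cap T_x\mani^0)\cap(E^0\cap T_y\mani^0)^\perp$, write $v=\sum_{i=1}^t a_i\,e_i(x)$. For each $j$,
\[
0=\langle v,e_j(y)\rangle=\sum_{i=1}^t a_i\,\langle e_i(x),e_j(y)\rangle = \bigl(G(x,y)^\intercal a\bigr)_j,
\]
so $G(x,y)^\intercal a=0$, forcing $a=0$ and hence $v=0$. This gives the conclusion on the neighborhood $U$.

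I do not expect a serious obstacle here: once the smooth bases from Lemma~\ref{lemma:bases} are in hand, the argument is a continuity-of-determinant statement. The only mild care point is to ensure the bases $(e_i(x))$ really do vary continuously with $x$ (not just pointwise exist), which is precisely what Lemma~\ref{lemma:bases} provides.
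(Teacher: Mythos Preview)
Your proof is correct and follows the same underlying idea as the paper's: reduce the triviality of the intersection to the nonvanishing of a determinant that depends continuously on $(x,y)$ and equals something invertible at $(\zeta,\zeta)$. The paper, however, goes a slightly longer route: it completes $(e_1(x),\dots,e_t(x))$ to a full orthonormal basis $(e_1(x),\dots,e_n(x))$ of $\E$ by taking a local submersion $\psi$ cutting out $E\cap\mani$ and Gram--Schmidt orthogonalizing its gradient, so that $(e_{t+1}(y),\dots,e_n(y))$ spans $(E^0\cap T_y\mani^0)^\perp$; it then checks that the $n\times n$ determinant of $(e_1(x),\dots,e_t(x),e_{t+1}(y),\dots,e_n(y))$ is nonzero near $(\zeta,\zeta)$. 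Your $t\times t$ Gram matrix $G(x,y)$ is more economical and avoids the submersion entirely: you never need bases for the orthogonal complements, only for the subspaces themselves, and the linear-algebra step ``$G(x,y)$ invertible $\Rightarrow$ trivial intersection'' is exactly the clean equivalence needed. Both arguments rest on the continuity furnished by Lemma~\ref{lemma:bases}.
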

\begin{proof}
  Let $n=\dim(\E)$, $d=\dim(E)$, $s=\dim(\mani)$ and $t=\dim(E^0 \cap
  T_\zeta \mani^0)$; the transversality assumption shows that
  $t=s+d-n$.
  
  Using again Lemma~\ref{lemma:bases}, we know that there exist a
  neighborhood $U_0$ of $\zeta$ and vectors $e_1(x),\dots,e_t(x)$
  depending continuously of $x \in U_0$, that form a orthonormal
  family, and whose span is $E^0 \cap T_x\mani^0$ for $x$ in $\mani
  \cap U_0$. Then, up to restricting further $U_0$, we consider a
  local submersion $\psi:\E\rightarrow \R^{n-t}$ such that
  $\psi^{-1}(0)\cap U_0=E\cap \mani\cap U_0$. Applying Gram-Schmidt
  orthogonalisation to the gradient of $\psi$ defines vectors
  $e_{t+1}(x),\dots,e_n(x)$ that depend continuously on $x$ and such
  that $(e_1(x),\dots,e_n(x))$ is an orthonormal basis of $\E$. In
  particular, when $x$ is in $\mani \cap U_0$,
  $(e_{t+1}(x),\dots,e_n(x))$ is an orthonormal basis of $(E^0 \cap
  T_x\mani^0)^\perp$.

  For $x$ and $y$ in $\mani\cap U_0$, the intersection of $E^0 \cap
  T_x\mani^0$ and $\left(E^0 \cap T_y\mani^0\right)^\perp$ is reduced
  to $\{0\}$ whenever the determinant $\Delta$ of the family
  $(e_1(x),\dots,e_t(x),e_{t+1}(y),\dots,e_n(y))$ is nonzero.
  The determinant $\Delta$ is a continuous function $U_0\times U_0 \to \R$, and
  $\Delta(\zeta,\zeta)$ is nonzero, so there exists a neighborhood
  $\Omega \subset U_0 \times U_0$ of $(\zeta,\zeta)$ that does not
  intersect $\Delta^{-1}(0)$. It is then enough to take $U$ such that
  $U \times U \subset \Omega$.
\end{proof}

\begin{lem}\label{lem:chi}
  Consider the mapping
  $$
\begin{array}{rrcl}
    \Lambda:&\mani\times \mani&\rightarrow&[0,1]\\
    &(x,y)&\mapsto& \cos(\alpha(E^0 \cap T_x\mani^0, \left(E^0 \cap T_y\mani^0\right)^\perp)).
  \end{array}
$$ There exists an open neighborhood $U$ of $\zeta$ and a constant
$\lambda$ such that for $x,y$ in $\mani \cap U$, $\Lambda(x,y)$ is
well-defined, and the inequality $\Lambda(x,y) \le \lambda \norm{x-y}$
holds.
\end{lem}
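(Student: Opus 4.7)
The plan is to exploit the smooth orthonormal bases provided by Lemma \ref{lemma:bases} and to observe that $\Lambda(x,y)$ vanishes identically on the diagonal $\{x = y\}$: when $x = y$, the two subspaces in question are $E^0 \cap T_x \mani^0$ and its orthogonal complement, whose angle is $\pi/2$. Hence the required linear bound in $\norm{x-y}$ should follow from the Lipschitz continuity of well-chosen smooth local bases of these subspaces.

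I will start by applying Lemma \ref{lemma:bases} (which is available since $\mani$ is of class $C^3$) to obtain, on an open neighborhood $U_0$ of $\zeta$, orthonormal functions $e_1,\dots,e_t$ of class $C^2$ whose span at each $x \in \mani \cap U_0$ is $E^0 \cap T_x \mani^0$. Following the argument of Lemma \ref{lem:trivial}, I will complete this family by Gram--Schmidt applied to the gradient of a local submersion defining $E \cap \mani$, yielding $C^2$ functions $e_{t+1},\dots,e_n$ such that $(e_1(x),\dots,e_n(x))$ is an orthonormal basis of $\E$ on $U_0$ and, for $x \in \mani \cap U_0$, $(e_{t+1}(x),\dots,e_n(x))$ spans $(E^0 \cap T_x \mani^0)^\perp$. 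Being $C^1$, all these functions are Lipschitz on a compact neighborhood $U \subset U_0$ of $\zeta$, with some common constant $L$; I will also shrink $U$ so that Lemma \ref{lem:trivial} applies, ensuring that $E^0 \cap T_x\mani^0$ and $(E^0 \cap T_y\mani^0)^\perp$ have trivial intersection for $x,y \in \mani \cap U$. The cosine $\Lambda(x,y)$ is then the plain maximum of $\langle u, v\rangle$ over unit vectors $u,v$ in each space, with no orthogonal quotient to take.

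Expanding $u = \sum_{i=1}^t a_i e_i(x)$ and $v = \sum_{j=t+1}^n b_j e_j(y)$ with $\sum a_i^2 = \sum b_j^2 = 1$, the scalar product splits as $\sum_{i \le t,\,j > t} a_i b_j \langle e_i(x), e_j(y)\rangle$. The key observation is that $\langle e_i(x), e_j(x)\rangle = 0$ when $i \le t < j$, so each cross-term rewrites as $\langle e_i(x), e_j(y) - e_j(x)\rangle$, bounded by $L\norm{x-y}$ via Cauchy--Schwarz and the Lipschitz property. A final pair of Cauchy--Schwarz inequalities on the coordinates ($\sum |a_i| \le \sqrt{t}$ and $\sum |b_j| \le \sqrt{n-t}$) then yields $|\langle u,v\rangle| \le L\sqrt{t(n-t)}\,\norm{x-y}$, so one may take $\lambda = L\sqrt{t(n-t)}$. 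No deeper obstacle arises: the only technical point is ensuring the right regularity of the completed basis to obtain uniform Lipschitz bounds, which is handled by the $C^3$ hypothesis on $\mani$ together with Lemma \ref{lemma:bases}.
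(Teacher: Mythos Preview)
Your proof is correct and follows the paper's approach: both build the same smooth orthonormal frame $(e_1,\dots,e_n)$ from Lemma~\ref{lemma:bases} and its completion, invoke Lemma~\ref{lem:trivial} to reduce to the case of trivial intersection, and then bound $\Lambda(x,y)$ via the Lipschitz continuity of these frames in $x$ and $y$. The only cosmetic difference is in the final estimate: the paper identifies $\Lambda(x,y)$ with the operator norm of the composed projection $\Pi_{\langle e_1(x),\dots,e_t(x)\rangle}\Pi_{\langle e_{t+1}(y),\dots,e_n(y)\rangle}$ (via~\cite[Ch.~9]{Deutsch01}) and uses the matrix identity $S_xP_x=0$, whereas you expand $\langle u,v\rangle$ directly in the coordinates $a_i,b_j$ and apply Cauchy--Schwarz.
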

\begin{proof}
  As before, let $n=\dim(\E)$, $d=\dim(E)$, $s=\dim(\mani)$ and
  $t=\dim(E^0 \cap T_\zeta \mani^0)$. Using Lemma~\ref{lemma:bases},
  we know that there exist $C^2$ functions $e_1,\dots,e_t: U\to \E$,
  defined in a neighborhood $U_0$ of $\zeta$, such that for $x$ in
  $\mani \cap U$, $e_1(x),\dots,e_t(x)$ is a orthonormal basis of $E^0
  \cap T_x\mani^0$. As in the previous lemma, this basis can be
  completed to an orthonormal basis $(e_1(x),\dots,e_n(x))$ of $\E$,
  with functions $e_{t+1},\dots,e_n$ that are still $C^2$
  around~$\zeta$.

  Consider the function $\Gamma: U_0\times U_0 \to \R$, such that
  $\Gamma(x,y)$ is the 2-norm of the linear mapping
  $\Pi_{e_1(x),\dots,e_t(x)} \Pi_{e_{t+1}(y),\dots,e_n(y)}$.  Using
  the previous lemma, up to restricting $U_0$, we may also assume that
  for $x$ and $y$ both in $\mani \cap U_0$, the intersection of the
  vector spaces $E^0 \cap T_x\mani^0$ and $ \left(E^0 \cap T_y\mani^0
  \right)^\perp$ is trivial. Using~\cite[Ch.~9]{Deutsch01} as in
  Lemma~\ref{lem:transverse}, this implies in particular that for such
  $x$ and $y$, $\Lambda(x,y)=\Gamma(x,y)$. Thus, we are going to prove
  that an inequality of the form $\Gamma(x,y) \le C \norm{x-y}$ holds
  for $x$ and $y$ in $\mani \cap U$, for suitable $U \subset U_0$ and
  $C$.
  
  Let $U$ be an open ball centered at $\zeta$, such that $\overline U$
  is contained in $U_0$. Because $e_{t+1},\dots,e_n$ are $C^1$, there
  exists a constant $c\ge 0$ such that $\norm{e_{i}(x)-e_{i}(y)} \le c/n
  \norm {x-y}$ holds for all $x,y$ in $U$ and $i$ in
  $\{t+1,\dots,n\}$. 

  The matrix $P_y$ of the orthogonal projection
  $\Pi_{e_{t+1}(y),\dots,e_n(y)}$ can be written as $P_y = R_y
  R_y^\intercal$, where $R_y$ is the matrix with columns
  $e_{t+1}(y),\dots,e_n(y)$.  In particular, $R_y$ can be rewritten as
  $R_y= R_x+\delta_{x,y}$, with $R_x$ being the matrix with columns
  $e_{t+1}(x),\dots,e_n(x)$ and where the operator norm of $\delta_{x,y}$
  is bounded by $c \norm{x-y}$.
  As a result, $P_y$ can be rewritten as 
  $$
  \begin{array}{ccl}
    P_y &=& R_y R_y^\intercal\\
&=& R_x R_x^\intercal + R_x \delta_{x,y}^\intercal + \delta_{x,y} R_x^\intercal+ \delta_{x,y} \delta_{x,y}^\intercal\\
&=&P_x + \Delta_{x,y},
  \end{array}
  $$ with $\Delta_{x,y}= R_x \delta_{x,y}^\intercal + \delta_{x,y}
  R_x^\intercal+ \delta_{x,y} \delta_{x,y}^\intercal$.  By
  construction, the norm of $\delta_{x,y}$ is bounded by $c\norm{x-y}$, and the norm of $R_x$ is equal to
  $1$. 
Consequently, the norm of $\Delta_{x,y}$ is bounded by $\lambda \Vert x-y\rVert$ on $U$, with $\lambda=2 c + c^2\sup_{x,y\in U}\norm{x-y}$ (up to restricting $U$, $\sup_{x,y\in U}\norm{x-y}$ can be made arbitrarily small).

  Let further $S_x$ be the matrix of the orthogonal projection
  $\Pi_{e_1(x),\dots,e_t(x)}$, and remark that $S_x P_x=0$.  In view
  of the above paragraphs, the matrix $Q_{x,y}$ of the linear mapping
  $\Pi_{e_1(x),\dots,e_t(x)} \Pi_{e_{t+1}(y),\dots,e_n(y)}$ can be
  rewritten as
  $$
  \begin{array}{ccl}
    Q_{x,y} &=& S_x P_y \\
            &=&    S_x P_x+ S_x \Delta_{x,y}    \\
  &=&  S_x \Delta_{x,y}.
  \end{array}
  $$ Because the norm of $\Delta_{x,y}$ is bounded by
  $\lambda\norm{x-y}$, and the norm of an orthogonal projection is at
  most $1$, the norm of $Q_{x,y}$ is also bounded by $\lambda\norm{x-y}$.  This implies that $\Gamma(x,y)$, which is the norm
  of $Q_{x,y}$ is a most $\lambda\norm{x-y}$.
\end{proof}


\subsection{Analysis of one iteration}

In what follows, we work over an open neighborhood $U$ of $\zeta$ that
has the form $U=B_\rho(\zeta)$, for some $\rho > 0$ chosen such that
\begin{itemize}
\item $\varphi$, $\Pi_\mani$ and $\Pi_\maniinter$ are well-defined in
  the {\em closed} ball $\overline{B_\rho(\zeta)}$, with $\Pi_\mani$
  and $\Pi_\maniinter$ of class $C^2$;
\item the inequality $\alpha(T_v^0 \mani,E^0)>0$ (as in Lemma
  \ref{lem:transverse}) and the conclusions of
  Lemmas~\ref{lemma:bases},~\ref{lem:trivial} and~\ref{lem:chi} hold
  in the closed ball $\overline{B_\rho(\zeta)}$;
\item $\maniinter \cap \overline{B_\rho(\zeta)}$ is closed (for the Euclidean topology).
\end{itemize}
Define the following:
\begin{itemize}
\item $\alpha_0=\inf_{v\in\mani\cap \overline{B_\rho(\zeta)}}\alpha(T_v \mani^0,E^0)$, so that $\alpha_0 > 0$;
\item $C_\mani=\sup_{v\in \overline{B_\rho(\zeta)}}\norm{D^2\Pi_\mani(v)}$;
\item $C_\maniinter=\sup_{z\in B_\rho(\zeta)}\norm{D\Pi_\maniinter(z)}$;
\item $\lambda$ is the constant introduced in Lemma~\ref{lem:chi};
\item $K=  \left ( \frac{C_\mani}{\sin(\alpha_0)} + \sqrt{2}\lambda\right )$
\item $K' = C_\maniinter K$
\item $\delta>0$ is such that $C_\mani^2 \delta^2 \le 1/2$ and $2\delta + K \delta^2 \le \rho$
  hold.
\end{itemize}

\begin{prop}\label{prop:convquad}
  For $x$ in $B_\delta(\zeta)$, the following properties hold:
  \begin{itemize}
  \item $\varphi(x)$ is in $B_{\rho}(\zeta)$, so $\Pi_\maniinter(\varphi(x))$ 
    is well-defined;
  \item $\norm{\varphi(x)-\Pi_\maniinter(x)} \le K \norm{x-\Pi_\maniinter(x)}^2$;
  \item $\norm{\Pi_\maniinter(\varphi(x))-\Pi_\maniinter(x)} \le K' \norm{x-\Pi_\maniinter(x)}^2.$
  \end{itemize}
\end{prop}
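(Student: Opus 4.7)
The plan is to decompose the error $\varphi(x)-x^\star$, where $x^\star:=\Pi_\maniinter(x)$, into two orthogonal pieces and show each is $O(\norm{x-x^\star}^2)$. Fix $x\in B_\delta(\zeta)$ and set $y:=\Pi_\mani(x)$ and $z:=\Pi_{E\cap T_y\mani}(x^\star)$. Since $\zeta\in\maniinter$, one has $\norm{x-x^\star}\le\norm{x-\zeta}\le\delta$ and $\norm{x-y}\le\delta$, so $y$ and $x^\star$ lie in $\overline{B_{2\delta}(\zeta)}\subset\overline{B_\rho(\zeta)}$ and all objects below are well-defined. A useful preliminary estimate is $\norm{y-x^\star}\le\sqrt 2\,\norm{x-x^\star}$: one expands $\norm{x-x^\star}^2=\norm{x-y}^2+2\langle x-y,y-x^\star\rangle+\norm{y-x^\star}^2$, uses $x-y\in N_y\mani^0$ to reduce the cross term to $\langle x-y,\Pi_{N_y\mani^0}(y-x^\star)\rangle$, and bounds it via the $C^2$ Taylor expansion of $\Pi_\mani$ at $y$ (which yields $\norm{\Pi_{N_y\mani^0}(x^\star-y)}\le(C_\mani/2)\norm{x^\star-y}^2$ because $\Pi_\mani(x^\star)=x^\star$ and $D\Pi_\mani(y)=\Pi_{T_y\mani^0}$). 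The hypothesis $C_\mani^2\delta^2\le 1/2$ is precisely what permits the rearrangement.

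By construction $\varphi(x)-z\in E^0\cap T_y\mani^0$ while $z-x^\star\perp E^0\cap T_y\mani^0$, so Pythagoras yields
$$\norm{\varphi(x)-x^\star}^2=\norm{\varphi(x)-z}^2+\norm{z-x^\star}^2.$$
For the first summand, the affine projection formula gives $\varphi(x)-z=\Pi_{E^0\cap T_y\mani^0}(x-x^\star)$; the key observation is that, because $x^\star=\Pi_\maniinter(x)$, the residual $x-x^\star$ lies in $(T_{x^\star}\maniinter^0)^\perp=(E^0\cap T_{x^\star}\mani^0)^\perp$. Lemma \ref{lem:trivial} applied to $x^\star,y\in\mani\cap U$ ensures that $E^0\cap T_y\mani^0$ meets this perpendicular only at $0$, so Lemma \ref{lem:angle2} combined with Lemma \ref{lem:chi} gives $\norm{\varphi(x)-z}\le\lambda\norm{y-x^\star}\,\norm{x-x^\star}\le\sqrt 2\,\lambda\norm{x-x^\star}^2$. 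For the second summand, the same Taylor argument yields $\dist(x^\star,T_y\mani)=\norm{\Pi_{N_y\mani^0}(x^\star-y)}\le(C_\mani/2)\norm{x^\star-y}^2\le C_\mani\norm{x-x^\star}^2$, and a standard angle-based inequality (valid because $x^\star\in E$ and $\alpha(E^0,T_y\mani^0)\ge\alpha_0$) gives $\dist(x^\star,E\cap T_y\mani)\le(1/\sin\alpha_0)\dist(x^\star,T_y\mani)$, whence $\norm{z-x^\star}\le(C_\mani/\sin\alpha_0)\norm{x-x^\star}^2$. Substituting and using $\sqrt{a^2+b^2}\le a+b$ delivers the second item with $K=C_\mani/\sin\alpha_0+\sqrt 2\,\lambda$.

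The first item now follows from the triangle inequality: $\norm{\varphi(x)-\zeta}\le\norm{\varphi(x)-x^\star}+\norm{x^\star-\zeta}\le K\delta^2+2\delta\le\rho$, so $\Pi_\maniinter(\varphi(x))$ is well-defined. The third item follows from the mean-value inequality for $\Pi_\maniinter$ (whose $C^1$-norm is bounded by $C_\maniinter$ on $B_\rho(\zeta)$) applied with $\Pi_\maniinter(x)=x^\star=\Pi_\maniinter(x^\star)$: $\norm{\Pi_\maniinter(\varphi(x))-\Pi_\maniinter(x)}\le C_\maniinter\norm{\varphi(x)-x^\star}\le K'\norm{x-x^\star}^2$. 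The hardest step is the first summand in the main estimate: recognizing that $x-x^\star$ lies in exactly the right subspace to unlock Lemma \ref{lem:chi} is the Newton-like ingredient of the algorithm, and it is this fact that causes the tangential component of the error to vanish to first order and yields quadratic rather than linear convergence.
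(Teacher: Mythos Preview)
Your argument is correct and is in substance the paper's own proof, reorganized around a single auxiliary point. The paper introduces two auxiliary points $w'$ (the projection of $x$ onto the affine space $x^\star+(E^0\cap T_y\mani^0)$) and $z'=\Pi_{T_y\mani}(w')$, and bounds $\norm{\varphi(x)-w'}$ and $\norm{w'-x^\star}$ separately; a short computation shows that with your $z=\Pi_{E\cap T_y\mani}(x^\star)$ one has $\varphi(x)-z=w'-x^\star$ and $z-x^\star=\varphi(x)-w'$, so you are bounding exactly the same two pieces. Your use of Pythagoras is slightly cleaner than the paper's triangle inequality at the corresponding step, but both land on the same constant $K$. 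Your ``standard angle-based inequality'' $\dist(x^\star,E\cap T_y\mani)\le(1/\sin\alpha_0)\dist(x^\star,T_y\mani)$ is precisely what the paper proves (for $w'$ in place of $x^\star$) via Lemma~\ref{lem:angle2}, and your treatment of the tangential piece via Lemma~\ref{lem:chi} mirrors the paper's Step~5.

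One small point of care: your sketch of the preliminary bound $\norm{y-x^\star}\le\sqrt 2\,\norm{x-x^\star}$ says you ``bound the cross term'' $2\langle x-y,\Pi_{N_y\mani^0}(y-x^\star)\rangle$ and then rearrange. If that means a bare Cauchy--Schwarz on the cross term, the resulting inequality $\norm{y-x^\star}^2\le\norm{x-x^\star}^2/(1-C_\mani\delta)$ only gives the factor $\sqrt 2$ under the stronger hypothesis $C_\mani\delta\le 1/2$, not $C_\mani^2\delta^2\le 1/2$. To get $\sqrt 2$ from the stated hypothesis you should instead complete the square: writing $a=\norm{x-y}$, $b=\norm{\Pi_{N_y\mani^0}(y-x^\star)}$, $c=\norm{y-x^\star}$, one has $\norm{x-x^\star}^2\ge a^2-2ab+c^2=(a-b)^2+c^2-b^2\ge c^2-b^2$, and then $b\le(C_\mani/2)c^2$ together with $c\le 2\delta$ gives $c^2\le 2\norm{x-x^\star}^2$. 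This is exactly the mechanism in the paper's Step~6 (there phrased via $\norm{y-z}\le\norm{x-w}$ and $\norm{y-w}^2=\norm{y-z}^2+\norm{z-w}^2$).
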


The rest of this subsection is devoted to the proof of this
proposition. Thus, we fix $x$ in $B_\delta(\zeta)$ in all that
follows; we also use the following shorthand: $y=\Pi_\mani(x)$,
$w=\Pi_\maniinter(x)$ and $z=\Pi_{T_y \mani}(w)$. Another pair of
points $w'$ and $z'$ will be used: $w'$ is the orthogonal projection
of $x$ on the affine space parallel to $E \cap T_y \mani$ containing
$w$, and $z'=\Pi_{T_y\mani}(w')$.

\paragraph{Step 1: Some basic inequalities.}
First, notice that if $x$ is in $B_{\delta}(\zeta)$, then we have
$$\norm{x-w} \le \norm{x-\zeta} < \delta,$$ 
because $\zeta$ is in $\maniinter$ and $w=\Pi_\maniinter(x)$, and 
$$\norm{x-y} \le \norm{x-\zeta} < \delta,$$ because $\zeta$ is in
$\mani$ and $y=\Pi_\mani(x)$. This implies that $w$ and $y$ belong to
$B_{2\delta}(\zeta)$ and thus to $B_\rho(\zeta)$ since
$$\begin{array}{r}
  \norm{w-\zeta}\leq\norm{w-x}+\norm{x-\zeta} <2\delta \le \rho,\\[1mm]
  \norm{y-\zeta}\leq\norm{y-x}+\norm{x-\zeta} <2\delta \le \rho.
\end{array}$$
Note also for further use that since $\norm{x-w} < \delta$ and
$\norm{x-y}< \delta$, we also have $\norm{y-w} < 2\delta$.

\paragraph{Step 2: Proof of inequality $\norm{z-w}< C_\mani\norm{x-w}^2$.}
We continue by doing a Taylor approximation of $\Pi_\mani$ between
$y$ and $w$.  Since $\Pi_\mani(w)=w$ and $\Pi_\mani(y)=y$, and since
all points of the line segment between $y$ and $w$ are in
$B_\rho(\zeta)$, we obtain
\begin{eqnarray*}
  w-y&=& \Pi_\mani(w)-\Pi_\mani(y)\\
  &=&\Pi_{T_y\mani^0}(w-y)+r,
\end{eqnarray*}
with $\norm{r}\leq \dfrac{C_\mani \norm{w-y}^2}{2}$.  Because $y+
\Pi_{T_y\mani^0}(w-y) = \Pi_{T_y \mani}(w) = z$, this implies
\begin{equation}\label{eq:approxquad}
  \norm{z-w}\leq \frac{C_\mani}{2}\norm{y-w}^2.
\end{equation}
Since we saw previously that $\norm{y-w} \le 2 \delta$, we deduce
in particular that 
$$\norm{z-w}\leq C_\mani \delta \norm{y-w} \leq 2 C_\mani \delta^2.$$
Because $x-y$ is orthogonal to $T_y \mani^0$, it is orthogonal to
$y-z$, and similarly for $w-z$; these relations imply that
$\norm{y-z} \le \norm{x-w}$.  On the other hand, since $w-z$ is
orthogonal to $y-z$, we also have by the Pythagorean theorem
\begin{equation*}
  \norm{y-w}^2=\norm{y-z}^2+\norm{z-w}^2,
\end{equation*}
so that
\begin{equation*}
  \norm{y-w}^2  \leq \norm{x-w}^2+\norm{z-w}^2.
\end{equation*}
From this inequality, using 
the upper bound
$\norm{z-w}\leq 2 C_\mani \delta^2$, we obtain
\begin{eqnarray*}
  \norm{z-w}&\leq& \frac{C_\mani}{2}  \norm{x-w}^2 +  \frac{C_\mani}{2} \norm{z-w}^2 \\
  &\leq &  \frac{C_\mani}{2}  \norm{x-w}^2 +   C_\mani^2 \delta^2 \norm{z-w}\\
  & \leq & \frac{C_\mani}{2}  \norm{x-w}^2 +  \frac 12 \norm{z-w},
\end{eqnarray*}
since $\delta$ is such that $ C_\mani^2\delta^2 \le \frac 12.$ We
finally get, as claimed,
\begin{equation}\label{eq:zw}
  \norm{z-w}< C_\mani\norm{x-w}^2.
\end{equation}

\paragraph{Step 3: Proof of inequality $\norm{\varphi(x)-w'} \le {\norm{z'-w'}}/{\sin(\alpha_0)}$.}
To prove this inequality, let us introduce the angle $\vartheta$ between
$w'-\varphi(x)$ and $z'-\varphi(x)$. First, we prove that
$\cos(\vartheta) \le \cos (\alpha_0)$, by an application of
Lemma~\ref{lem:angle2}.
\begin{itemize}
\item $w'-\varphi(x)$ is in $E^0$, because
  $w'-\varphi(x)=(w'-w)+(w-\varphi(x))$ and both summands are in
  $E^0$. By construction of $w'$, $w'-w$ is in $(E \cap T_y \mani)^0$,
  which is in $E^0$, and $w$ and $\varphi(x)$ are in $E$, so
  $w-\varphi(x)$ is indeed in $E^0$ as well.
\item $z'-\varphi(x)$ is in $T_y \mani^0$, because both $z'$ and $\varphi(x)$ 
  are in $T_y \mani$.
\item $z'-\varphi(x)$ is in the orthogonal complement of $(E \cap T_y
  \mani)^0$, because $z'-\varphi(x)=(z'-w')+(w'-x)+(x-\varphi(x))$,
  which are respectively orthogonal to $T_y\mani^0$, $(E \cap
  T_y\mani)^0$ and $(E \cap T_y \mani)^0$. By Lemma~\ref{lemma:bases},
  $E\cap T_y \mani$ is not empty, and thus $ (E
  \cap T_y\mani)^0=E^0 \cap T_y\mani^0$.
\end{itemize}
Thus, we can apply Lemma~\ref{lem:angle2} to deduce $\cos(\vartheta) \le
\cos (\alpha_0)$, as claimed. Alternatively, $1/\sin(\vartheta) \le
1/\sin(\alpha_0)$.

Second, remark that $w'-z'$ is orthogonal to $\varphi(x)-z'$. Indeed,
the latter is in $T_y \mani^0$, and by construction $w'-z'$ is
orthogonal to $T_y\mani^0$. This proves that
$\norm{w'-z'}=\sin(\vartheta) \norm{w'-\varphi(x)}$, and thus
the inequality
\begin{equation}\label{eq:phiw}
  \norm{\varphi(x)-w'} \le \frac{\norm{z'-w'}}{\sin(\alpha_0)}.
\end{equation}

\paragraph{Step 4: Proof of inequality $\norm{\varphi(x)-w} \le \frac{C_\mani}{\sin(\alpha_0)} \norm{x-w}^2 + \norm{w'-w}$.}
In order to establish this inequality, remark that the vectors $z-w$
and $z'-w'$ have the same norm. Indeed, $z-w$ and $z'-w'$ are
orthogonal to $T_y \mani^0$ by construction of $z$ and $z'$, and both
$w-w'$ and $z-z'$ are in $T_y \mani^0$. Using~\eqref{eq:zw} and~\eqref{eq:phiw}, we deduce
$$\norm{\varphi(x)-w'} \le \frac{\norm{z-w}}{\sin(\alpha_0)} \le
\frac{C_\mani}{\sin(\alpha_0)} \norm{x-w}^2.$$ Using the triangle
inequality $\norm{\varphi(x)-w} \le \norm{\varphi(x)-w'} +
\norm{w'-w}$, we finally deduce
\begin{equation}\label{eq:phiw2}
\norm{\varphi(x)-w} \le \frac{C_\mani}{\sin(\alpha_0)} \norm{x-w}^2 + \norm{w'-w}.
\end{equation}

\paragraph{Step 5: Proof of inequality $\norm{w-w'}\le \lambda\norm{y-w}\norm{x-w}$.}
Let $\vartheta'$ be the angle between the vectors $w'-w$ and $x-w$;
then, because $x-w'$ is orthogonal to $w'-w$, we have
$\norm{w-w'}=\cos(\vartheta')\norm{x-w}$. We claim further that the
inequality
$$\cos(\vartheta') \le \cos(\alpha(E^0 \cap T_y\mani^0,
\left( E^0\cap T_w\mani^0\right)^\perp))$$ holds. Indeed, this
follows from applying Lemma~\ref{lem:angle2} to the vectors $w'-w$
and $x-w$; let us briefly verify that its assumptions are satisfied:
\begin{itemize}
\item $w'-w$ is in $(E \cap T_y\mani)^0=E^0\cap T_y\mani^0$, by construction.
\item $x-w$ is orthogonal to $T_w \maniinter^0$, and the transversality
    assumption (Lemma~\ref{lemma:bases}) shows that $T_w \maniinter=E \cap T_w \mani$.
  \item By Lemma~\ref{lem:trivial}, the vector spaces $E^0 \cap
    T_y\mani^0 $ and $(E^0 \cap T_w\mani^0)^\perp$ have a trivial
    intersection.
\end{itemize}
Using Lemma~\ref{lem:chi}, we deduce the inequality $\cos(\vartheta')
\le \lambda \norm{y-w}$, and thus 
\begin{equation}\label{eq:ww'}
  \norm{w-w'}\le \lambda\norm{y-w}\norm{x-w}.    
\end{equation}

\paragraph{Step 6: Proof of inequality $\norm{y-w} \le \sqrt{2} \norm{x-w}$.}
We established the following inequalities at Step~2:
$$ \norm{z-w} \le C_\mani \delta \norm{y-w}$$
and 
$$ \norm{y-w}^2  \leq \norm{x-w}^2+\norm{z-w}^2.$$
Combining these two inequalities gives 
\begin{eqnarray*}
   \norm{y-w}^2  &\leq& \norm{x-w}^2+\norm{z-w}^2\\
   &\le & \norm{x-w}^2+C_\mani^2 \delta^2 \norm{y-w}^2\\
   &\le & \norm{x-w}^2+  \frac 12 \norm{y-w}^2,
\end{eqnarray*}
since $\delta^2 C_\mani^2 \le 1/2$. Thus, we deduce
\begin{equation}\label{eq:yw}
  \norm{y-w} \le \sqrt{2} \norm{x-w}.
\end{equation}

\paragraph{Step 7: Proof of inequality $ \norm{\varphi(x)-w} \le K \norm{x-w}^2.$}
Combining the results of Steps 4, 5 and 6, we obtain the first
inequality claimed in Proposition~\ref{prop:convquad}:
\begin{eqnarray*}
  \norm{\varphi(x)-w} &\le& \frac{C_\mani}{\sin(\alpha_0)} \norm{x-w}^2 + \norm{w'-w} \\
  &\le & \frac{C_\mani}{\sin(\alpha_0)} \norm{x-w}^2 +  \lambda\norm{y-w}\norm{x-w} \\
  &\le &  \frac{C_\mani}{\sin(\alpha_0)} \norm{x-w}^2 +  \sqrt{2}\lambda\norm{x-w}^2 \\
  &\le& K \norm{x-w}^2.
\end{eqnarray*}

\paragraph{Step 8: Proof that $\varphi(x)$ is in $B_\rho(\zeta)$.}
Next, we prove that $\norm{\zeta-\varphi(x)} < \rho$.  Indeed,
recall that we saw in Step~1 that $\norm{\zeta-w} < 2\delta$ and
$\norm{x-w} < \delta$. Using the inequality proved above, we deduce
$$\begin{array}{rcl}
  \norm{\zeta-\varphi(x)}&\leq&\norm{\zeta-w}+\norm{\varphi(x)-w}\\
  &<&2\delta+K\norm{x-w}^2\\
   &<&2\delta+K\delta^2,
\end{array}$$
which is less than $\rho$ by construction of $\delta$. 

\paragraph{Step 9: Proof of inequality $\norm{\Pi_\maniinter(\varphi(x))-w} \le K'\norm{x-w}^2$.}
This is last item required to conclude the proof of
Proposition~\ref{prop:convquad}.  A first order Taylor expansion of
$\Pi_\maniinter$ along the line segment joining $\varphi(x)$ to $w$
(which are both in $B_\rho(\zeta)$) gives
$$\begin{array}{rcl}
  \norm{\Pi_\maniinter(\varphi(x))-\Pi_\maniinter(w)}&\leq&C_\maniinter\norm{\varphi(x)-w}\\ &\leq&
  K  C_\maniinter\norm{x-w}^2 \qquad \text{\emph{c.f.} Step~7}\\ 
  &\leq& K'\norm{x-w}^2.
\end{array}$$
Since $\Pi_\maniinter(w)=w$, the proof is complete.


\subsection{Convergence of {\tt NewtonSLRA}}

In this subsection, we study the behavior of the sequence defined by
$x_{i+1} = \varphi(x_i)$: we prove that the sequence is well-defined
for $x_0$ close enough to $\zeta$ and that it converges quadratically
to a limit $x_\infty$. In what follows, we write $\kappa = K + K'$ and
we choose $\nu>0$ such that $\kappa \nu <1/2$ and $4\nu<\delta$.

\begin{prop}\label{prop:iterqud} 
  Let $x_0$ be in $B_\nu(\zeta)$. One can define sequences $(x_i)_{i
    \ge 0}$ and $(w_i)_{i \ge 0}$ of elements of $\E$ such that
  $\norm{x_0-w_0} \le \nu$ and, for $i \ge 0$:
  \begin{itemize}
  \item $x_i$ is in $B_{\delta}(\zeta)$;
  \item $w_{i}=\Pi_\maniinter(x_i)$;
  \item $x_{i}=\varphi(x_{i-1})$ if $i \ge 1$;
  \item $\norm{x_{i}-w_{i}}\leq \kappa \norm{x_{i-1}-w_{i-1}}^2$ if $i \ge 1$;
  \item $\norm{w_{i}-w_{i-1}}\leq \kappa \norm{x_{i-1}-w_{i-1}}^2$ if $i \ge 1$.
\end{itemize}
\end{prop}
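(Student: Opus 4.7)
The proposition is a standard induction setup built on Proposition \ref{prop:convquad}, so the plan is to prove it by induction on $i$, with the real content hidden in the choice of $\nu$ and in controlling the drift of the sequence $(w_i)$ relative to $w_0$.

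For the base case ($i=0$), one takes $x_0 \in B_\nu(\zeta)$ and sets $w_0 = \Pi_\maniinter(x_0)$. Since $4\nu < \delta$, we have $x_0 \in B_\delta(\zeta) \subset B_\rho(\zeta)$, so $w_0$ is well-defined. The bound $\norm{x_0-w_0} \le \nu$ follows from $\zeta \in \maniinter$, which gives $\norm{x_0-w_0} \le \norm{x_0 - \zeta} < \nu$.

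For the inductive step, suppose $x_0,\dots,x_i$ and $w_0,\dots,w_i$ have been built with all the claimed properties. Since $x_i \in B_\delta(\zeta)$, Proposition~\ref{prop:convquad} applies: it gives $\varphi(x_i) \in B_\rho(\zeta)$ (so $\Pi_\maniinter(\varphi(x_i))$ is well-defined), together with the inequalities $\norm{\varphi(x_i)-w_i} \le K\norm{x_i-w_i}^2$ and $\norm{\Pi_\maniinter(\varphi(x_i))-w_i} \le K'\norm{x_i-w_i}^2$. Setting $x_{i+1}=\varphi(x_i)$ and $w_{i+1}=\Pi_\maniinter(x_{i+1})$, the second inequality is precisely $\norm{w_{i+1}-w_i} \le \kappa\norm{x_i-w_i}^2$ (since $K' \le \kappa$), and the triangle inequality $\norm{x_{i+1}-w_{i+1}} \le \norm{x_{i+1}-w_i} + \norm{w_i-w_{i+1}} \le (K+K')\norm{x_i-w_i}^2 = \kappa\norm{x_i-w_i}^2$ gives the fourth bullet.

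The only nontrivial point is then verifying that $x_{i+1} \in B_\delta(\zeta)$, which is needed to continue the induction. Writing $b_j = \norm{x_j-w_j}$, the recurrence $b_{j+1} \le \kappa b_j^2$ together with $\kappa b_0 \le \kappa \nu < 1/2$ yields $\kappa b_j < 1/2$ for all $j$, hence $b_{j+1} \le (1/2) b_j$ and so $b_j \le \nu/2^j$. Consequently $\sum_{j \ge 0} \kappa b_j^2 \le \sum_{j \ge 0} b_j/2 \le \nu$, which telescopes to bound the $w$-drift: $\norm{w_{i+1}-w_0} \le \sum_{j=0}^{i} \kappa b_j^2 \le \nu$. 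Combined with $\norm{w_0-\zeta} \le \norm{w_0-x_0}+\norm{x_0-\zeta} \le 2\nu$ and $b_{i+1} \le \nu$, we obtain
\[
\norm{x_{i+1}-\zeta} \le b_{i+1} + \norm{w_{i+1}-w_0} + \norm{w_0-\zeta} \le \nu + \nu + 2\nu = 4\nu < \delta,
\]
closing the induction. The main obstacle is this last bookkeeping: controlling the cumulative drift of $w_i$ from $w_0$ (which is not quadratic but merely summable) so as to keep all iterates safely inside $B_\delta(\zeta)$; everything else is a direct application of Proposition~\ref{prop:convquad}.
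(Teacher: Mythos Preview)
Your proof is correct and follows essentially the same inductive scheme as the paper: establish the base case, apply Proposition~\ref{prop:convquad} at each step to get the two quadratic inequalities, combine them via the triangle inequality, and then control $\norm{x_{i+1}-\zeta}$ by telescoping the drift of the $w_j$'s back to $w_0$ and using $\norm{w_0-\zeta}\le 2\nu$. The only cosmetic difference is that the paper records the sharper doubly-exponential bound $\norm{x_j-w_j}\le \nu/2^{2^j-1}$ (which it reuses later in the proof of Theorem~\ref{theo:rate}), whereas you use the coarser but entirely sufficient geometric bound $b_j\le \nu/2^j$; both lead to the same final estimate $\norm{x_{i+1}-\zeta}\le 4\nu<\delta$.
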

\begin{proof}
  We do a proof by induction; precisely, we prove that for all $i\ge
  0$, one can construct $x_1,\dots,x_i$ and $w_0,\dots,w_i$ that
  satisfy the five items above. 

  For $i=0$, the inequality $\norm{x_0-\zeta} \le \delta$ follows from
  the fact that $x_0$ is in $B_\nu(\zeta)$ and that $\nu \le \delta$.
  This implies that $w_0=\Pi_\maniinter(x_0)$ is well-defined; these
  are all the facts we need to prove for index $0$.  In what follows,
  we will also use the facts that $\norm{x_0-w_0} \le \nu$, which
  holds since $w_0$ is the closest point to $x_0$ on $\maniinter$, and
  hence $\norm{w_0-\zeta} \le \norm{w_0-x_0} + \norm{x_0-\zeta}\le 2
  \nu$.

  Let us now assume that the claims hold up to index $i$, and prove
  that they still hold at index $i+1$. Thus, $x_1,\dots,x_i$ and
  $w_0,\dots,w_i$ have been defined, and $x_i$ is in
  $B_{\delta}(\zeta)$.

  We set $x_{i+1}=\varphi(x_i)$; this is valid since $x_i$ is in
  $B_\delta(\zeta)$. For the same reason, we can apply
  Proposition~\ref{prop:convquad}; we deduce that $\norm{x_{i+1}-w_i}
  \le K \norm{x_i-w_i}^2,$ that we can define $w_{i+1} =
  \Pi_\maniinter(x_{i+1})$, and that $\norm{w_{i+1}-w_i} \le K'
  \norm{x_i-w_i}^2$ holds. By the triangle inequality
  $\norm{x_{i+1}-w_{i+1}}\leq \norm{x_{i+1}-w_i}+\norm{w_i-w_{i+1}}$,
  we get
  $$\norm{x_{i+1}-w_{i+1}}\leq \kappa \norm{x_i-w_i}^2$$
  and similarly 
  $$\norm{w_{i+1}-w_{i}}\leq K' \norm{x_i-w_i}^2 \le \kappa
  \norm{x_i-w_i}^2.$$ The only thing left to prove is that $x_{i+1}$
  is in $B_\delta(\zeta)$.  To this effect, remark that we have (by an
  easy induction, and using the fact that $\kappa \nu <1/2$) 
  $$\norm{x_j - w_j} \le \kappa^{2^j-1} \nu^{2^j} \le \frac{\nu}{2^{2^j-1}}$$ 
  for $0 \le j \le i+1$ and
  $$\norm{w_{j+1} - w_j} \le \kappa^{2^{j+1}-1} \nu^{2^{j+1}} \le \frac{\nu}{2^{2^{j+1}-1}}$$ for 
  $0 \le j \le i.$ We deduce
  \begin{eqnarray*}
    \norm{x_{i+1}-\zeta}&\leq&\norm{x_{i+1}-w_{i+1}}+\norm{w_{i+1}-w_i}+\dots+\norm{w_1-w_0}+\norm{w_0-\zeta}\\
    &\leq& \frac{\nu}{2^{2^{i+1}-1}}   +\sum_{j=0}^{i} \frac{\nu}{2^{2^{j+1}-1}}     +2\nu\\
    &\leq& 2 \nu\left(1+\sum_{\ell\in\N} \frac 1{2^\ell}\right)\\
    &\leq& 4\nu\\[1mm]
    &<&\delta \quad\text{because $4 \nu < \delta$}.
  \end{eqnarray*}
\end{proof}

\paragraph{Proof of Theorem~\ref{theo:rate}.} First, we prove that the
sequence $(w_i)$ is a Cauchy sequence. Assume that $x_0$ lies in the
ball $B_\nu(\zeta)$. As a consequence of
Proposition~\ref{prop:iterqud}, we deduce by a simple induction (as we
did during the proof of that proposition) that the following holds for
all $i \ge 0$:
\begin{equation}\label{eq:xiwi}
\norm{x_{i} - w_{i}} \le \frac{\nu}{2^{2^i-1}} \quad\text{and}\quad
  \norm{w_{i+1} - w_i} \le \frac{\nu}{2^{2^{i+1}-1}}.
\end{equation}
We deduce in particular
\begin{equation}\label{eq:xiwinu}
  \norm{x_i - w_i} \le \nu,
\end{equation}
and this in turn allows us to prove (by induction on $\ell$) that for 
all $i,\ell$, the following holds:
\begin{equation}\label{eq:xiwiell}
\norm{x_{i+\ell} - w_{i+\ell}} \le \frac{\norm{x_i-w_i}}{2^{2^\ell-1}}.
\end{equation}
As a first consequence, we have, for all $k,\ell\in\N$, with $k \ge \ell$:
\begin{eqnarray*}
\norm{w_k-w_\ell}&\leq&\sum_{i=0}^{k-\ell-1} \norm{w_{\ell+i+1}-w_{\ell+i}}\\
&\leq&\sum_{i=0}^{\infty} \frac{\nu}{2^{2^{\ell+i+1}-1}}  \quad \text{by Eq.~\eqref{eq:xiwi}}\\
& \le & \frac{\nu}{2^{\ell}}.
\end{eqnarray*}
Therefore, the sequence $(w_i)$ is a Cauchy sequence; since
$\lim_i\norm{x_i-w_i} = 0$, both sequences $(x_i)$ and $(w_i)$
converge to a common limit $x_\infty$. This proves the first claim in
Theorem~\ref{theo:rate}. Furthermore, we obtain the following
estimates:
\begin{eqnarray*}
  \norm{x_\infty-w_{i}} & \leq & \sum_{\ell\in\N}\norm{w_{i+\ell+1}-w_{i+\ell}}\\
  &\le & K' \sum_{\ell\in\N} \norm{x_{i+\ell}-w_{i+\ell}}^2 \qquad \text{by Proposition~\ref{prop:convquad}}\\
  &\le& K' \sum_{\ell \in\N} \frac{ \norm{x_i-w_i}^2 } {2^{2^{\ell+1}-2}}  \qquad \text{by Eq.~\eqref{eq:xiwiell}}\\
  &\le& 2K'\norm{x_i-w_i}^2.
\end{eqnarray*}
In particular, for $i=0$, we get the claim of the theorem that
$\norm{x_\infty-\Pi_\maniinter(x_0)} \le \gamma'
\norm{x_0-\Pi_\maniinter(x_0)}^2$, with $\gamma' = 2 K'$.  Besides,
since $\norm{x_i-w_i} \le \nu$ and $2 \kappa \nu < 1$, we also obtain,
for any $i\ge 0$,
\begin{equation}\label{eq:xinftywi}
  \norm{x_\infty-w_i} \le 2K' \nu \norm{x_i-w_i} \le  \norm{x_i-w_i}.
\end{equation}
Finally, we prove that the convergence for the sequence $(x_i)$ is
quadratic.  Note that since $\maniinter \cap \overline{B_\rho(\zeta)}$
is closed, $x_\infty$ is in $\maniinter$ (as claimed in the theorem).
In particular, $\norm{x_i-w_i} \le \norm{x_i-x_\infty}$. We deduce,
for $i \ge 0$,
\begin{eqnarray*}
  \norm{x_{i+1}-x_\infty}&\leq&\norm{x_{i+1}-w_{i+1}}+\norm{w_{i+1}-x_\infty}\\
  &\le & \norm{x_{i+1}-w_{i+1}} + \norm{x_{i+1}-w_{i+1}} \quad \text{by Eq.~\eqref{eq:xinftywi}}\\
  &\le & 2 \kappa \norm{x_{i}-w_{i}}^2  \quad \text{using Proposition~\ref{prop:iterqud}}\\
  &\le & 2 \kappa\norm{x_{i}-x_\infty}^2. 
\end{eqnarray*}
This proves the last missing item from Theorem~\ref{theo:rate}, with
$\gamma = 2 \kappa$.

\paragraph{Proof of Theorem~\ref{theo:diffPhi}.} 
We prove that $\Phi$ is differentiable at
$\zeta$ (which implies as a by-product its continuity around $\zeta$)
and that its derivative is $\Pi_{T_\zeta \maniinter^0}$.  Let
$C'_\maniinter$ denote the operator norm of the second derivative of
$\Pi_\maniinter$ at $\zeta$, which is well defined since $\Pi_\maniinter$ is of
class $C^2$ in $B_\nu(\zeta)$.

Doing a first order expansion of $\Pi_\maniinter$ between $\zeta$ and
a point $x$ in $B_\nu(\zeta)$, and using Theorem \ref{theo:rate} and
the facts that $\Pi_\maniinter(\zeta)=\Phi(\zeta)=\zeta$ and
$\norm{\Phi(x)-\Pi_\maniinter(x)}\le \gamma'\norm{x-\Pi_\maniinter(x)}^2$
proved above, we get
\begin{eqnarray*}
      \norm{\Phi(x)-\Phi(\zeta)-\Pi_{T_\zeta \maniinter^0}(x-\zeta)}
&\le& \norm{\Phi(x)-\Pi_\maniinter(x)}+\norm{\Pi_\maniinter(x)-\Pi_\maniinter(\zeta) -\Pi_{T_\zeta \maniinter^0}(x-\zeta)}\\
&\le& \gamma'\norm{x-\Pi_\maniinter(x)}^2+ \frac{C'_\maniinter}2 \norm{x-\zeta}^2 \\
&\le& (\gamma'+\frac{C'_\maniinter}2)\norm{x-\zeta}^2\text{\quad since $\norm{x-\Pi_\maniinter(x)}\leq  \norm{x-\zeta}$.}  
\end{eqnarray*}
Our claim, and thus  Theorem~\ref{theo:diffPhi}, are proved.


\section{Applications and experimental results}
\label{sec:expe}

Our algorithm {\tt NewtonSLRA} has been implemented in the {\tt Maple}
computer algebra system. In this section, we describe three
applications of Structured Low-Rank Approximation (approximate GCD,
matrix completion and approximate Hankel matrices) and compare our
implementation to previous state-of-the-art. These experiments show
that our all-purpose algorithm often performs as well as, or better
than, existing solutions in a variety of settings.

All experiments have been conducted on a QUAD-core AMD Opteron 8384
2.7GHz.

\subsection{Univariate approximate GCD}
For $i\in\N$, let $\R[x]_i$ denote the vector space of polynomials with real coefficients of degree at most $i$. 
For $m,n,d\in\N$, let $G_{m,n,d}\subset\R[x]^2$ denote the set
$$G_{m,n,d}=\{(f,g)\in \R[x]_m\times\R[x]_n : \deg(\GCD(f,g))=d\}.$$

We consider the Euclidean norm on $\R[x]_m$ and $\R[x]_m\times\R[x]_n$: if $f=\sum_{i=0}^m f_i x^i$ and $g=\sum_{i=0}^n g_i x^i$, then
$$
\begin{array}{rcl}
  \norm{f}&=&\sqrt{\sum_{i=0}^mf_i^2}\\
  \norm{(f,g)}&=&\sqrt{\norm{f}_m^2+\norm{g}_n^2}
\end{array}
$$

\begin{problem}{Approximate GCD}\label{prob:approxGCD}
  Let $(f,g)\in\R[x]_m\times\R[x]_n$, $d\in\N$.
  Find $(f^*,g^*)\in\R[x]_m\times \R[x]_n$ such that
  $\deg\left(\GCD\left(f^*,g^*\right)\right)=d$ and 
$\norm{\left(f-f^*,g-g^*\right)}$ is ``small''.
\end{problem}

Similarly to SLRA, there are several variants of the approximate GCD
problem. In some articles, the goal is to find a pair $(f^*,g^*)$
which minimizes the distance $\norm{(f-f^*,g-g^*)}$ (see \emph{e.g.}
\cite{Ter09} and references therein).  In particular,
\cite{CheYakGalMou09} yields a certified quadratically convergent
algorithm in the particular case $d=1$ (\emph{i.e.} the resultant of
$f^*$ and $g^*$ vanishes).  Sometimes, the goal is to find, if it
exists, a $\varepsilon$-GCD, \emph{i.e.} a pair $(f^*,g^*)$ such that
$\norm{(f-f^*,g-g^*)}<\varepsilon$ and which have common roots for a
given $\varepsilon>0$, see \emph{e.g.} \cite{BinBoi07,CorWatZhi04}. In
some other contexts, the degree of the GCD is not known in advance and
the goal is to maximize the degree $\deg(\GCD(f^*,g^*))$ provided that
$\norm{(f-f^*,g-g^*)}<\varepsilon$ for a given $\varepsilon>0$
\cite{EmiGalLom97}.

First, we recall the definition of the $d$-th Sylvester matrix of two
univariate polynomials, which is rank-deficient if and only if
$\deg(\GCD(f,g))\geq d$.

\begin{defi}[$d$-th Sylvester matrix]
  Let $(f,g)\in\R[x]_m\times\R[x]_n$ be univariate polynomials
  $f=\sum_{i=0}^m f_i x^i$, $g=\sum_{i=0}^n g_i x^i$. The $d$-th
  Sylvester matrix is the $(m+n-d+1)\times (m+n-2 d+2)$ matrix defined
  by
$$
\SubRes_d(f,g)=\begin{array}{@{}l@{}}
\left.\left[\begin{array}{cccccccccc}
  f_m&0&\dots&0&0&g_n&0&\dots&0&0\\
  f_{m-1}&f_m&\ddots&\vdots&\vdots&g_{n-1}&g_n&\ddots&\vdots&\vdots\\
\vdots&\ddots&\ddots&\ddots&\vdots&\vdots&\ddots&\ddots&\vdots&\vdots\\
\vdots&\ddots&\ddots&\ddots&\vdots&\vdots&\ddots&\ddots&\vdots&\vdots\\
\vdots&\ddots&\ddots&\ddots&\vdots&\vdots&\ddots&\ddots&\vdots&\vdots\\
  0&0&\dots&f_0&f_1&0&0&\dots&g_0&g_1\\
  0&0&\dots&0&f_0&0&0&\dots&0&g_0
  \end{array}\right]\right\}n+m-d+1\\
  \quad\hexbrace{4.3cm}{n-d+1}~~~~\hexbrace{3.7cm}{m-d+1}
\end{array}.$$
\end{defi}

It is well-known that $\deg(\GCD(f,g)) = d$ if and only if
$\rank(\SubRes_d(f,g)) = m+n-2 d+1$ (see \emph{e.g.}~\cite[Section
  2]{KalYanZhi07}). Let then $\detvar_{m+n-2 d+1}$ denote the
determinantal variety of the $(m+n-d+1)\times (m+n-2 d+2)$-matrices of
rank $m+n-2 d+1$. The following corollary is a direct consequence of
this equivalence; it shows that Problem \ref{prob:approxGCD} is a
particular case of SLRA.

\begin{coro}
By identifying $\R[x]_m\times\R[x]_n$ with the linear subspace
$\SubRes_d\left(\R[x]_m\times\R[x]_n\right)$, we have
$G_{m,n,d}=\SubRes_d(\R[x]_m\times\R[x]_n)\cap \detvar_{m+n-2 d+1}$.
\end{coro}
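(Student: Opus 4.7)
The plan is to unwind the definitions and invoke the cited rank-degree equivalence for Sylvester matrices. The statement really has two ingredients to verify: that the linear identification between $\R[x]_m\times\R[x]_n$ and a linear subspace of matrices is well-defined, and that the set-theoretic equality $G_{m,n,d}=\SubRes_d(\R[x]_m\times\R[x]_n)\cap \detvar_{m+n-2d+1}$ follows by a direct translation.

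First, I would observe that the map $\SubRes_d:\R[x]_m\times\R[x]_n\to \matsp_{m+n-d+1,m+n-2d+2}(\R)$ is linear in the coefficients of $f$ and $g$, and injective. Injectivity is immediate because the first column of $\SubRes_d(f,g)$ recovers $(f_m,f_{m-1},\dots,f_0,0,\dots,0)^\intercal$ and thus $f$, while the $(n-d+2)$-th column analogously recovers $g$. Consequently, $E:=\SubRes_d(\R[x]_m\times\R[x]_n)$ is a linear subspace of $\matsp_{m+n-d+1,m+n-2d+2}(\R)$ of dimension $(m+1)+(n+1)$, and $\SubRes_d$ identifies $\R[x]_m\times\R[x]_n$ with $E$ isomorphically.

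Second, I would invoke the cited equivalence $\deg(\GCD(f,g))=d$ if and only if $\rank(\SubRes_d(f,g))=m+n-2d+1$. Then the chain of equivalences
\[
(f,g)\in G_{m,n,d}\iff \deg(\GCD(f,g))=d \iff \rank(\SubRes_d(f,g))=m+n-2d+1 \iff \SubRes_d(f,g)\in \detvar_{m+n-2d+1}
\]
combined with the tautology $\SubRes_d(f,g)\in E$ yields the asserted set equality once $G_{m,n,d}$ is transported under the identification.

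Since both steps reduce to unpacking definitions and citing the known Sylvester-matrix rank characterization, there is no substantive obstacle; the only mild care required is to note that under the identification $\R[x]_m\times\R[x]_n\cong E$ the intersection is taken inside the ambient matrix space, which is precisely the SLRA setting of Problem~\ref{prob:SLRA} with affine subspace $E$ (in fact a linear subspace) and target rank $r=m+n-2d+1$.
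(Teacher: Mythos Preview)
Your proposal is correct and matches the paper's approach: the paper does not give a separate proof of this corollary, stating only that it is ``a direct consequence'' of the cited rank-degree equivalence $\deg(\GCD(f,g))=d \iff \rank(\SubRes_d(f,g))=m+n-2d+1$. Your argument simply makes explicit the linearity and injectivity of $\SubRes_d$ and the chain of equivalences that the paper leaves implicit.
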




Since approximate GCD is a particular case of SLRA, we now report
experimental results which describe the behavior of our {\tt Maple}
implementation of {\tt NewtonSLRA/1} in this context (for this application, we are searching for matrices of corank 1, so the first variant of {\tt NewtonSLRA} has a better complexity).  Given
$m,n,d\in\N$ and $\varepsilon>0$, we construct an instance of the
approximate GCD problem as follows:
\begin{itemize}
\item we generate three polynomials $(\tilde f,\tilde g,\tilde
  h)\in\R[x]_{m-d}\times\R[x]_{n-d}\times \R[x]_d$, with all
  coefficients chosen uniformly at random in the interval $[-10,10]$;
\item we set $\mathfrak f=\tilde f \cdot\tilde h/\norm{(\tilde f
  \cdot\tilde h,\tilde g \cdot\tilde h)}$ and $\mathfrak g=\tilde g
  \cdot\tilde h/\norm{(\tilde f \cdot\tilde h,\tilde g \cdot\tilde
    h)}$, so that $\deg(\GCD(\mathfrak f,\mathfrak g))=d$ and
  $\norm{(\mathfrak f,\mathfrak g)}=1$;
\item we construct $(f,g)$ by adding to each of the coefficients of
  $\mathfrak f$ and $\mathfrak g$ a noise sampled from a Gaussian
  distribution $\mathcal N(0,\varepsilon)$ of standard deviation
  $\varepsilon$.
\end{itemize}

In the sequel, we let $(f,g)\in\R[x]_m\times\R[x]_n$ denote the noisy
data constructed as described above and
$(f^*,g^*)\in\R[x]_m\times\R[x]_n$ denote the pair minimizing
$\norm{(f-f^*,g-g^*)}$ subject to $\deg(\GCD(f^*,g^*))=d$.

In Table \ref{table:convquadapproxGCD}, we compare the steps' sizes of
{\tt NewtonSLRA} with those of {\tt GPGCD}, a state-of-the art
algorithm dedicated to the computation of approximate GCDs \cite{Ter09}. The
experimental results give evidence of the practical quadratic
convergence of {\tt NewtonSLRA}, as predicted by Theorem
\ref{theo:rate}. Experimental results for {\tt GPGCD} seem to indicate
linear convergence, but we would like to point out that {\tt GPGCD}
converges towards a solution of the optimization problem (it finds the
nearest pair of polynomials subject to the degree condition on the
GCD) and hence returns a nearer approximation than {\tt NewtonSLRA}.

\begin{table}
  \centering
  \begin{tabular}{|c|c|c|}
\cline{2-3}
\multicolumn{1}{c|}{}&\multicolumn{2}{|c|}{sizes of iteration steps}\\
\hline
iteration&\texttt{NewtonSLRA}&\texttt{GPGCD}\\
\hline
1&{$\mathbf{0.42\,\,\,10^{-3}}$}&$0.20\,\,\,10^{-2}$\\
2&{$\mathbf{0.19\,\,\,10^{-5}}$}&$0.30\,\,\,10^{-3}$\\
3&{$\mathbf{0.11\,\,\,10^{-9}}$}&$0.15\,\,\,10^{-4}$\\
4&{$\mathbf{0.43\,\,\,10^{-18}}$}&$0.68\,\,\,10^{-6}$\\
5&{$\mathbf{0.10\,\,\,10^{-34}}$}&$0.17\,\,\,10^{-8}$\\
\hline
\end{tabular}
  \caption{Quadratic convergence of {\tt NewtonSLRA}. The polynomials are randomly generated with $m=n=25$, $d=10$, and ${\sf Digits}=100$ in {\tt Maple}.}\label{table:convquadapproxGCD}  
\end{table}

Table \ref{table:convNewtonSLRA} shows the experimental behavior of
{\tt NewtonSLRA} on a small example ($n=m=10$, $r=5$) with
high-precision. Here the computation is stopped when the step size
becomes smaller than $10^{-50}$ or after 50 iterations. The
computations were performed with different values of $\varepsilon$
with {\tt Digits}=120 in {\tt Maple} and each entry of the table is on
average over 20 random instances. For $\varepsilon=0.1$ or
$\varepsilon=1$, {\tt GPGCD} did not converge within 50 iterations for
most of the instances while {\tt NewtonSLRA} converges within
approximately 10 iterations. One iteration of {\tt NewtonSLRA} is slightly
slower than one iteration of {\tt GPGCD} because of the cost of
the singular value decomposition. Consequently, the range of problems
where the quadratic convergence of {\tt NewtonSLRA} yields efficiency
improvements are SLRA problems where linearly convergent algorithms
would require a lot of iterations.  

The third column reports the distance between the output of {\tt
  NewtonSLRA} and its input (the noisy pair of polynomials).  Note
that the squared distance between the initial exact data $(\mathfrak
f,\mathfrak g)$ and the noisy data $(f,g)$ follow a $\chi^2$
distribution with $n+m$ degrees of freedom. Therefore, the expected
magnitude of the noise is $\mathbb E(\norm{(\mathfrak f-f,\mathfrak
  g-g)})=\sqrt{\varepsilon^2(n+m)}=\varepsilon\sqrt{n+m}$. In
Table~\ref{table:convNewtonSLRA}, $m=n=10$ and hence the expected
amplitude of the noise is $2\sqrt{5}\varepsilon$. All the entries in the
third column are below this value, which indicates that on
average, the output of {\tt NewtonSLRA} is actually a better
approximation of the noisy data than the initial exact data
$(\mathfrak f,\mathfrak g)$. Consequently, the quality of the solution
returned by {\tt NewtonSLRA} should be sufficient for many
applications even though it does not solve the associated minimization
problem. The last column of Table \ref{table:convNewtonSLRA} indicates
the distance between $(f',g')$, the output of {\tt NewtonSLRA} and the
nearest solution $(f^*,g^*)$. As predicted by Theorem \ref{theo:rate},
the distance to the nearest solution appears to be quadratic in the
magnitude $\varepsilon$ of the noise.

In order to estimate $(f^*,g^*)$, we use the linearly convergent
certified Gauss-Newton iteration in \cite{YakMasCheAur06}, using
  as the starting point of the iteration the pair $(\mathfrak f,
  \mathfrak g)$. Note that using directly the Gauss-Newton approach
  for the approximate GCD problem requires a good starting point: in
  applicative situations, the pair $(\mathfrak f,\mathfrak g)$ is
  unknown and therefore finding such a good pair with a high degree
  gcd is a difficult problem.

\begin{table}
\centering
\begin{tabular}{|c|c|c|c|c|}
\cline{2-3}
\multicolumn{1}{c|}{}&\multicolumn{2}{|c|}{Nb. iterations}&\multicolumn{2}{c}{}\\
\hline
\multicolumn{1}{|c|}{$\varepsilon$}&{\tt NewtonSLRA}&{\tt GPGCD}&$\norm{(f'-f,g'-g)}$&$\norm{(f'-f^*,g'-g^*)}$\\
\hline
$10^{-10}$&$4.0$&$6.0$&$1.86\,\,\,10^{-10}$&$3.12\,\,\,10^{-19}$\\
$10^{-9}$&$4.0$&$6.6$&$1.93\,\,\,10^{-9}$&$2.98\,\,\,10^{-17}$\\
$10^{-8}$&$4.0$&$7.2$&$2.01\,\,\,10^{-8}$&$3.16\,\,\,10^{-15}$\\
$10^{-7}$&$4.9$&$8.7$&$2.06\,\,\,10^{-7}$&$3.25\,\,\,10^{-13}$\\
$10^{-6}$&$5.0$&$10.0$&$1.62\,\,\,10^{-6}$&$5.45\,\,\,10^{-11}$\\
$10^{-5}$&$5.1$&$11.9$&$1.53\,\,\,10^{-5}$&$1.15\,\,\,10^{-9}$\\
$10^{-4}$&$5.6$&$15.4$&$1.82\,\,\,10^{-4}$&$1.99\,\,\,10^{-7}$\\
$10^{-3}$&$6.3$&$24.4$&$1.76\,\,\,10^{-3}$&$1.96\,\,\,10^{-5}$\\
$10^{-2}$&$7.1$&$37.1$&$1.87\,\,\,10^{-2}$&$3.26\,\,\,10^{-3}$\\
$10^{-1}$&$8.7$&$49.2$&$1.43\,\,\,10^{-1}$&$6.94\,\,\,10^{-2}$\\
$10^{0}$&$11.0$&$50$&$2.42\,\,\,10^{-1}$&$1.71\,\,\,10^{-1}$\\
\hline
\end{tabular}
  \caption{Experimental convergence of {\tt NewtonSLRA}. The pair $(f,g)$ is the input polynomials, $(f',g')$ is the output of {\tt NewtonSLRA}, and $(f^*,g^*)$ is the optimal solution (the polynomials minimizing $\norm{f-f^*,g-g^*}$ under the constraint $\deg(\GCD(f^*,g^*))=d$). The polynomials are randomly generated with $m=n=10$, $d=5$, and ${\sf Digits}=120$ in {\tt Maple}. The iteration is stopped when the step size becomes smaller that $10^{-50}$ or after $50$ iterations}\label{table:convNewtonSLRA}  
\end{table}

We would like to point out that {\tt NewtonSLRA} is also able to solve
larger problems: for instance, it can compute approximate GCDs for
$m=n=2000$ and $d=1000$ within a few minutes (for the default
numerical precision of Maple: {\tt Digits=10}). In order to
  demonstrate the efficiency of our approach, we compare in
  Table~\ref{table:comparUVGCD} timings obtained with our implementation of {\tt
    NewtonSLRA} and with the software {\tt uvGCD}
  \cite{zeng2004approximate}. We observe in this table that {\tt NewtonSLRA} runs faster than {\tt uvGCD}; the quality of the output (\emph{i.e.} the value $\norm{(f_{output}-f,g_{output}-g)}$) of {\tt NewtonSLRA} is comparable to that of {\tt uvGCD}.

\begin{table}
\centering
\begin{tabular}{|c|c|c|c|c|}
\cline{2-5}
\multicolumn{1}{c|}{}&\multicolumn{2}{|c|}{time (in $s$)}&\multicolumn{2}{c|}{$\norm{(f_{output}-f,g_{output}-g)}$}\\
\hline
\multicolumn{1}{|c|}{$(m,n,d)$}&{\tt NewtonSLRA}&{\tt uvGCD}&{\tt NewtonSLRA}&{\tt uvGCD}\\
\hline
\hline
(20,20,10)&0.0264&0.1876&0.0003034353&0.0003034150\\
(40,40,20)&0.0552&0.6185&0.0005466551&0.0004171554\\
(60,60,30)&0.1925000&1.670900&0.0005305&0.0005248752\\
(80,80,40)&0.3870000&3.277600&0.0006652485&0.0006573120\\
(100,100,50)&0.4288000&5.221100&0.0008292970&0.0007893376\\
(120,120,60)&0.5922000&8.987600&0.0008901396&0.0007972352\\
(140,140,70)&0.8618000&12.58410&0.0009151193&0.0008635334\\
(160,160,80)&1.040300&16.84990&0.0009183804&0.001195548\\
(180,180,90)&1.510300&24.01900&0.0009902834&0.001812256\\
(200,200,100)&1.601800&29.00110&0.001041346&0.001032610\\
(220,220,110)&1.970000&39.47140&0.002613709&0.001061010\\
(240,240,120)&2.363400&49.85650&0.001227303&0.001083454\\
(260,260,130)&2.771200&61.15920&0.001224906&0.001153301\\
(280,280,140)&3.419700&73.69030&0.003155242&0.004107356\\
(300,300,150)&3.082400&86.92640&0.001296697&0.003963097\\
\hline
\end{tabular}
\caption{Comparison with {\tt uvGCD}. For both of the software, $(f,g)$ is the pair of input polynomials, $(f_{output},g_{output})$ is the output pair.\label{table:comparUVGCD}}
\end{table}

\subsection{Low-rank matrix completion}
\label{sec:matrixCompletion}

Matrix completion is a problem arising in several applications in
Engineering Sciences, and plays an important role in the recent
development of compressed sensing. Knowing some properties of a matrix
(\emph{e.g.} its rank), the goal is to recover it by looking
only at a subset of its entries. We focus here on low-rank matrix
completion which can be modeled by structured low-rank approximation:
let $I$ be a subset of $\{1,\ldots,p\}\times \{1,\ldots, q\}$ and
$A=(a_{i,j})_{a_{i,j}\in \R, (i,j)\in I}$. We consider the affine
space $E\subset
\matsp_{p,q}(\R)$ of all matrices $(M_{i,j})$ such that, for $(i,j)\in
I$, $M_{i,j}=a_{i,j}$.  Low-rank matrix completion is a SLRA problem
since it asks to find a matrix in $E\cap
\detvar_r$.

One particular case of interest for applications is when there is a
unique solution to the matrix completion problem. In that case,
$(p-r)(q-r)>\dim(E)$. Consequently, the transversality
condition required for the analysis performed in
Section \ref{sec:proofquad} does not hold. Therefore, the results in this
section are mainly experimental observations.

\medskip

Efficient techniques have been developped to tackle the matrix
completion problem via a convex relaxation
(see \cite{candes2010matrix,candes2010power,candes2009exact,Rec11} and
references therein). In this section, we report experimental results
which indicate that Algorithm {\tt NewtonSLRA} can be used to solve
families of low-rank matrix completion problems which cannot be solved
by the convex relaxation. Moreover, we give timings which seem to
indicate that the computational complexity of {\tt NewtonSLRA} is of
the same order of magnitude as that of convex optimization techniques.

We follow \cite[Section 7]{candes2009exact} for the generation of instances of the matrix completion problem:
\begin{itemize}
\item for $r\in\{1,\ldots,p\}$, we generate a $p\times p$ matrix $M=L\cdot R$ of rank $r$ by sampling two matrices $L\in\matsp_{p,r}(\R)$ and $R\in\matsp_{r,p}(\R)$ whose entries follow i.i.d. Gaussian distributions $\mathcal N(0,1)$;
\item we uncover $m$ entries at random in the matrix by sampling a subset $I\subset\{1,\ldots,p\}\times\{1,\ldots, q\}$ of cardinality $m$ uniformly at random;
\item the affine space $E$ is the set of matrices $X=(X_{i,j})$ such that $X_{i,j}=M_{i,j}$ if $(i,j)\in I$.
\end{itemize}

Then we run {\tt NewtonSLRA} by setting as the starting point of the
iteration the matrix $N=(N_{i,j})\in E$ defined by
$$\left\{\begin{array}{lr}
N_{i,j}=M_{i,j}&\text{if $(i,j)\in I$}\\
N_{i,j}=0&\text{otherwise}
\end{array}\right.
$$
and we stop iterating {\tt NewtonSLRA} when the size of an iteration
 becomes smaller than $10^{-4}$ or after 100 iterations.  We consider
 the problem solved if {\tt NewtonSLRA} returns a matrix $\hat M$ such
 that
$$\norm{\hat M-M}/\norm{M}<10^{-3}$$
for more than for 75\% of randomly generated instances.

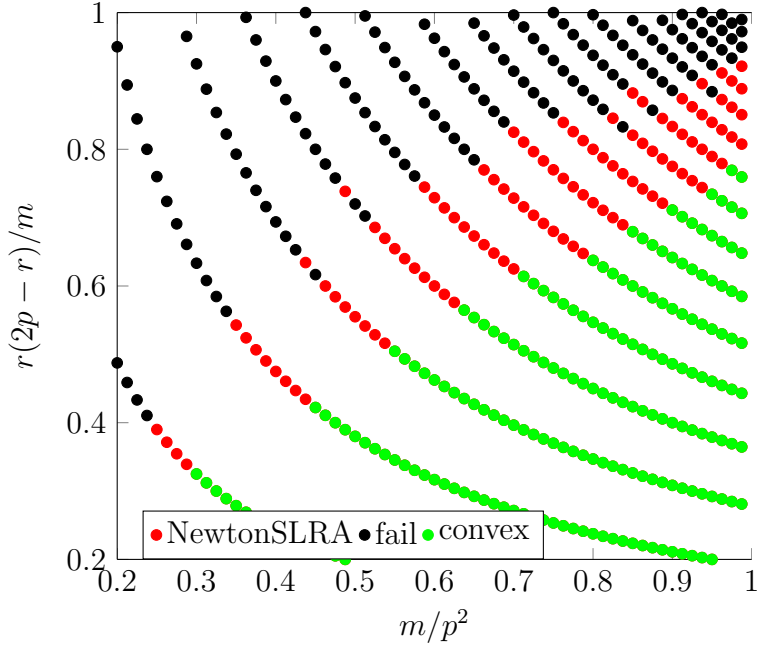
\begin{figure}
\centering
\begin{tikzpicture}[scale=1]
\pgfplotsset{every axis legend/.append style={at={(0.04,0)},anchor=south west}}
\begin{axis}[scale only axis,xmin=0.2,ymin=0.2, ymax=1, xmax=1,axis y line*=left,xlabel={\normalsize $m/p^2$},
legend columns=3, ylabel={\normalsize $r(2p-r)/m$},
]
\addplot[red, mark=*,thin,only marks] table[x=abs,y=ord]{improv};
\addplot[black, mark=*,thin,only marks] table[x=abs,y=ord]{fail};
\addplot[green, mark=*,thin,only marks] table[x=abs,y=ord]{alre};
\legend{NewtonSLRA, fail, convex}
\end{axis}
\end{tikzpicture}\caption{Performances of {\tt NewtonSLRA} for low-rank matrix completion \label{figure:matrixCompletion}}
\end{figure}

Figure \ref{figure:matrixCompletion} reports experimental results for
$n=40$, and should be compared with \cite[Figure
1]{candes2009exact}. Green dots correspond to instances that can be
solved by convex methods (green dots correspond to the white/grey area
in \cite[Figure 1]{candes2009exact}). Any instance that could be
solved by the convex relaxation presented in \cite{candes2009exact} is
also solved by {\tt NewtonSLRA}. Red dots correspond to parameters
where the matrix can be completed by {\tt NewtonSLRA} but not by the
convex relaxation. Black dots correspond to problems which are not
solved by any of these methods. This figure indicates that {\tt
NewtonSLRA} extends the range of matrix completion problems that could
be treated by convex relaxation.

Timings given in \cite{recht2008necessary} indicate that the
semidefinite program obtained via the convex relaxation is solved in
approximately 2 minutes on a 2GHz laptop (for the instances that can
be solved by this method: the green dots in
Figure \ref{figure:matrixCompletion}). For {\tt NewtonSLRA}, the
timings for solving these instances range between 0.8 seconds and 34 seconds
seconds on a QUAD-core Intel i5-3570 3.4GHz.

We also compare our implementation of {\tt NewtonSLRA} with a
  state-of-the-art Matlab software of Riemannian optimization
  developped by B. Vandereycken. Figure \ref{fig:comparVandereycken}
  shows the convergence properties of these two algorithms on an
  example of matrix completion of a $100\times 100$ matrix of rank $5$
  where $1950$ samples have been observed. The graph shows the fast
  convergence of {\tt NewtonSLRA} at each iteration and suggests
  quadratic convergence. The precision is capped at $2^{-48}$ (the
  size of the mantissa of a double float) in order to use BLAS
  routines to compute efficiently the SVD. In practice, the Riemannian
  optimization software is faster than {\tt NewtonSLRA}, even though
  it requires more iterations. For the example described in Figure
  \ref{fig:comparVandereycken}, the total running time of the
  Riemannian optimization software is 0.1s, whereas the total running
  total running time of NewtonSLRA is 45s. Also, {\tt NewtonSLRA} is
  restricted in practice to small matrix sizes and do not apply to
  large-scale matrix completion problems. Consequently, the fast
  convergence of {\tt NewtonSLRA} may yield improvements in
  applications where we require a very precise completion of a small
  size matrix. Also, we would like to point out that {\tt NewtonSLRA}
  also experimentally converges when the input matrix is slightly
  noisy, but this phenomenon is beyond the scope of this paper and is
  not explained by the theoretical convergence analysis in
  Section~\ref{sec:proofquad}.

\begin{figure}
\centering
\begin{tikzpicture}[scale=0.6]
\begin{semilogyaxis}[scale only axis,axis y line*=left,xmin=0,ymax=0,ymin=0.00000000000000001, xlabel={\normalsize iterations of Riemannian optimization},
legend columns=3, ylabel={\normalsize relative residual},
]
\addplot[black, mark=+] table[x=abs,y=ord]{convVanderEycken};
\end{semilogyaxis}
\end{tikzpicture}
\begin{tikzpicture}[scale=0.6]
\begin{semilogyaxis}[scale only axis,axis y line*=left,ymax=0,ymin=0.00000000000000001,xmin=0,xlabel={\normalsize iterations of NewtonSLRA},
legend columns=3, ylabel={\normalsize relative residual},
]
\addplot[black, mark=+] table[x=abs,y=ord]{convNewtonSLRA};
\end{semilogyaxis}
\end{tikzpicture}

\caption{Comparison of Riemannian optimization and NewtonSLRA. The relative residual is the value $\norm{P_\Omega(M_{input}-M_i)}/\norm{P_\Omega(M_{input})}$, where $P_\Omega$ is the orthogonal projection on the linear space of matrices having zeroes outside the set of observed entries \cite{Van13}. During {\tt NewtonSLRA}, the relative residual is measured after the computation of the SVD (matrix $\widetilde M$ in the pseudocode).\label{fig:comparVandereycken}}
\end{figure}
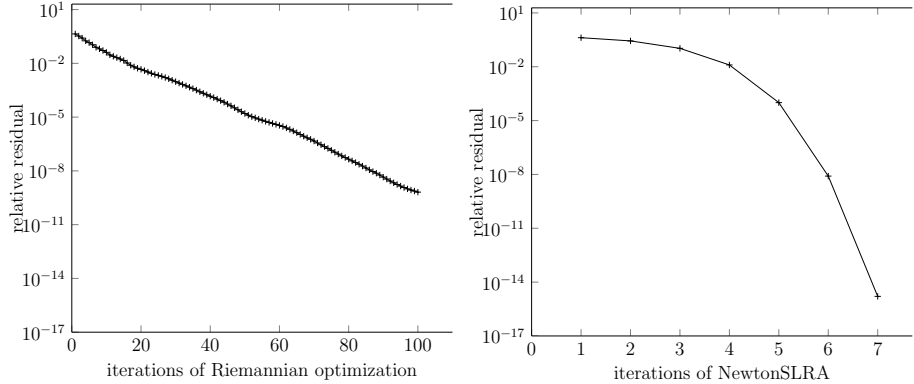

\subsection{Low-rank approximation of Hankel matrices}
In this section, we finally compare the performances of NewtonSLRA
with the STLN approach for Low-Rank Approximation of Hankel matrices
proposed in \cite{park1999low}.  Let us recall briefly the
experimental setting described in \cite[Section 4.2]{park1999low} for
$7\times 5$ Hankel matrices.

Let $H_c$ be the following rank $4$ Hankel matrix:
$$H_c = \begin{bmatrix}
\nu_1&\nu_2&\dots&\nu_5\\
\nu_2&\nu_3&\ldots&\nu_6\\
\vdots&\vdots&\vdots&\vdots\\
\nu_7&\nu_8&\dots&\nu_{11}
\end{bmatrix},$$
where $\nu_i=\sum_{\ell=1}^4 \beta_\ell z_\ell^i$, with $\beta=(1,2,1/2,3/2)$, $z=(\exp(-0.1), \exp(-0.2),\exp(-0.3),\exp(-0.35))$.

The perturbed matrix is $H=H_c+\tau \Delta$, where $\tau>0$ and $\Delta$ is a Hankel matrix with entries picked uniformly at random in the interval $[0,1]$.

In Table \ref{table:Hankel}, we report the number of iterations needed
to obtain a rank $4$ approximation of $H$ with several algorithms. As
in \cite{park1999low}, we stop iterating as soon as the smallest
singular value becomes less than $10^{-14}$. The number of iterations
of NewtonSLRA becomes smaller than for STLN when the magnitude of the
noise becomes larger.

\begin{table}
\centering
\begin{tabular}{|c||c|c|c|c|}
\hline
$\tau$&STLN1&STLN2&Cadzow&NewtonSLRA\\
\hline\hline
$10^{-8}$&1.1&1.7&59.8&{\bf 2.4}\\
\hline
$10^{-7}$&1.6&2.3&75.3&{\bf 3.4}\\
\hline
$10^{-6}$&2.2&2.2&83.0&{\bf 3.9}\\
\hline
$10^{-5}$&2.1&3.2&92.4&{\bf 3.8}\\
\hline
$10^{-4}$&2.1&3.9&93.3&{\bf 4.0}\\
\hline
$10^{-3}$&4.0&6.8&100*&{\bf 4.1}\\
\hline
$10^{-2}$&4.5&20.5&100*&{\bf 4.2}\\
\hline
$10^{-1}$&6.9&22.6&100*&{\bf 4.2}\\
\hline
\end{tabular}
\caption{Number of iterations required by several algorithms to converge towards a rank 4 Hankel matrix\label{table:Hankel}. Each entry in the last column is the average of 30 test results. The three first columns recall the experimental results in \cite[Table 4.1]{park1999low}. 100* means that the algorithm did not converge within 100 iterations.}
\end{table}

Another setting which is important for practical applications is the
behavior of the algorithm in the presence of an outlier, \emph{i.e.}
when one measure is very imprecise compared to the other measures. To
investigate this case, we follow the experimental setting
in \cite[Table 4.2]{park1999low}: we generate Hankel matrices as
above, but then we add 0.01 to all entries on the 8th antidiagonal.
Experiments seem to indicate that {\tt NewtonSLRA} also behaves well
in the presence of such an outlier, as shown by the number of
iterations that we report in Table~\ref{table:Hankeloutlier}. Also,
each of these low-rank approximations of Hankel matrices (with and
without an outlier) were computed in less than 0.6 seconds with {\tt
NewtonSLRA}.

\begin{table}
\centering
\begin{tabular}{|c||c|c|c|c|}
\hline
$\tau$&STLN1&STLN2&Cadzow&NewtonSLRA\\
\hline\hline
$10^{-8}$&8&100*&95&{\bf 4}\\
\hline
$10^{-7}$&8&100*&100*&{\bf 4}\\
\hline
$10^{-6}$&8&100*&90&{\bf 4}\\
\hline
$10^{-5}$&8&100*&95&{\bf 4}\\
\hline
$10^{-4}$&8&100*&99&{\bf 4}\\
\hline
$10^{-3}$&6&100*&100*&{\bf 4}\\
\hline
$10^{-2}$&20&100*&100*&{\bf 4.1}\\
\hline
$10^{-1}$&10&100*&100*&{\bf 4.4}\\
\hline
\end{tabular}
\caption{Number of iterations required by several algorithms to converge towards a rank 4 Hankel matrix \label{table:Hankeloutlier} in presence of an outlier on the 8th antidiagonal. Each entry in the last column is the average of 30 test results. The three first columns recall the experimental results in \cite[Table 4.2]{park1999low}.}
\end{table}

\bigskip

{\bf Acknowledgments.} We are grateful to Erich Kaltofen, Giorgio
Ottaviani, Olivier Ruatta, Bruno Salvy, Bernd Sturmfels and Agnes
Szanto for useful discussions and for pointing out important
references. We also wish to thank an anonymous referee for his useful comments which led to the second variant of the algorithm. We acknowledge the financial support of NSERC and of the
Canada Research Chairs program.

\bibliographystyle{plain}
\bibliography{biblioSLRA}

\bigskip
\bigskip

\footnotesize
\noindent {\bf Authors' addresses:}

\noindent \'Eric Schost, Western University, Department of Computer Science, Middlesex College; Ontario N6A 3K7, Canada. {\tt eschost@uwo.ca}

\smallskip

\noindent Pierre-Jean Spaenlehauer, CARAMEL project, INRIA Grand-Est;
LORIA, Campus scientifique, BP239, 54506 Vandoeuvre-l\`es-Nancy, France, {\tt pierre-jean.spaenlehauer@.inria.fr}
\end{document}